\newtheorem{theorem}{Theorem}
\theoremstyle{plain}
\newtheorem{claim}[theorem]{Claim}
\newtheorem{construction}[theorem]{Construction}
\newtheorem{conjecture}[theorem]{Conjecture}
\newtheorem{corollary}[theorem]{Corollary}
\newtheorem{lemma}[theorem]{Lemma}
\newtheorem{proposition}[theorem]{Proposition}
\numberwithin{equation}{section}
\numberwithin{theorem}{section}
\numberwithin{case}{section}
\numberwithin{subcase}{case}
\def\F{\mathcal{F}}
\def\cP{\mathcal{P}}
\def \a{\alpha}
\def \e{\epsilon}
\def \r{\gamma}
\def \bfi{\mathbf{i}}
\def \bfu{\mathbf{u}}
\def \bfv{\mathbf{v}}
\begin{document}
\title{Near Perfect Matchings in $k$-uniform Hypergraphs II}
\author{Jie Han}
\address{Instituto de Matem\'{a}tica e Estat\'{\i}stica, Universidade de S\~{a}o Paulo, Rua do Mat\~{a}o 1010, 05508-090, S\~{a}o Paulo, Brazil}
\email[Jie Han]{jhan@ime.usp.br}
\thanks{The author is supported by FAPESP (2014/18641-5, 2015/07869-8).}
\date{\today}
\subjclass[2010]{Primary 05C70, 05C65} %
\keywords{perfect matching, hypergraph, absorbing method}%

\begin{abstract}
Suppose $k\nmid n$ and $H$ is an $n$-vertex $k$-uniform hypergraph.
A near perfect matching in $H$ is a matching of size $\lfloor n/k\rfloor$.
We give a divisibility barrier construction that prevents the existence of near perfect matchings in $H$. This generalizes the divisibility barrier for perfect matchings.
We give a conjecture on the minimum $d$-degree threshold forcing a (near) perfect matching in $H$ which generalizes a well-known conjecture on perfect matchings. We also verify our conjecture for various cases. 
Our proof makes use of the lattice-based absorbing method that the author used recently
to solve two other problems on matching and tilings for hypergraphs.
\end{abstract}

\maketitle

\section{Introduction}

Given $k\ge 2$, a $k$-uniform hypergraph (in short, \emph{$k$-graph}) consists of a vertex set $V(H)$ and an edge set $E(H)\subseteq \binom{V(H)}{k}$, where every edge is a $k$-element subset of $V(H)$. A \emph{matching} in $H$ is a collection of vertex-disjoint edges of $H$. A \emph{perfect matching} $M$ in $H$ is a matching that covers all vertices of $H$. Clearly a perfect matching in $H$ exists only if $k$ divides $|V(H)|$. When $k$ does not divide $n=|V(H)|$, we call a matching $M$ in $H$ a \emph{near perfect matching} if $|M|=\lfloor n/k \rfloor$.

Given a $k$-graph $H$ with a set $S$ of $d$ vertices (where $1 \le d \le k-1$) we define $\deg_{H} (S)$ to be the number of edges containing $S$ (the subscript $H$ is omitted if it is clear from the context). The \emph{minimum $d$-degree $\delta _{d} (H)$} of $H$ is the minimum of $\deg_{H} (S)$ over all $d$-vertex sets $S$ in $H$.  
We refer to $\delta _{k-1} (H)$ as the \emph{minimum codegree} of $H$.

\subsection{Matchings in hypergraphs via degree conditions}
For integers $n, k, d, s$ such that $1\le d\le k-1$ and $0\le s\le n/k$, let $m_d^{s}(k,n)$ denote the smallest integer $m$ such that $\delta_d(H)\ge m$ forces the existence of a matching in $H$ of size $s$ for any $k$-graph $H$ on $n$ vertices.
Throughout this note, $o(1)$ represent a function of $n$ that tends to 0 when $n$ goes to infinity.
The following conjecture~\cite{HPS, KuOs_survey09} has received much attention in the last few years: codegree~\cite{KO06mat, RRS06mat, RRS09}, and for $1\le d\le k-2$, approximate $d$-degree~\cite{AFHRRS, HPS, MaRu, Pik}, exact $d$-degree~\cite{CzKa, Han15mat, Khan1, Khan2, KOT, TrZh12, TrZh13, TrZh15} (also see surveys~\cite{RR, zsurvey}).

\begin{conjecture}\label{conj:mat}
For $1\le d\le k-1$ and $k \mid n$,
\[
m_d^{n/k} (k,n) = \left(\max \left\{ \frac12, 1- \left(1- \frac{1}{k} \right)^{k-d}  \right\} +o(1)\right)\binom{n-d}{k-d}.
\]
\end{conjecture}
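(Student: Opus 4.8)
The plan is to show $m_d^{n/k}(k,n)$ is sandwiched between the two quantities in the maximum, with the upper bound established in whatever cases the method reaches. The lower bound is a pair of constructions, and it is the source of the maximum. For the \emph{space barrier}, fix $W\subseteq V(H)$ with $|W|=n/k-1$ and let $H_1$ be all $k$-sets meeting $W$; any matching covers at most $|W|$ vertices of $W$ and so has fewer than $n/k$ edges. A $d$-set $S$ disjoint from $W$ has $\deg_{H_1}(S)=\binom{n-d}{k-d}-\binom{n-|W|-d}{k-d}=\bigl(1-(1-1/k)^{k-d}+o(1)\bigr)\binom{n-d}{k-d}$, and $d$-sets meeting $W$ have strictly larger degree, so $\delta_d(H_1)$ realizes the first term. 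For the \emph{divisibility barrier}, partition $V(H)=A\cup B$ and let $H_2$ consist of all $k$-sets $e$ with $|e\cap A|$ even; then $\sum_{e\in M}|e\cap A|$ is even for every matching $M$, so taking $|A|$ odd forbids a perfect matching. With $|A|$ as close to $n/2$ as parity permits, every $d$-set lies in $\bigl(\tfrac12+o(1)\bigr)\binom{n-d}{k-d}$ edges, giving the second term; the componentwise better of the two constructions yields the stated lower bound. (Minor fixes — shifting a few vertices so $k\mid n$, and deleting or adding edges at low-degree sets so the minimum degree is exactly one below the threshold — are routine.)

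For the upper bound I would run the lattice-based absorbing method. Fix $\gamma\gg\epsilon'\gg\epsilon>0$ and assume $\delta_d(H)\ge\bigl(\max\{\tfrac12,1-(1-1/k)^{k-d}\}+\gamma\bigr)\binom{n-d}{k-d}$. \textbf{Step 1 (non-extremal absorption).} Unless $H$ is $\gamma$-close to one of the two barriers, produce a matching $M_{\mathrm{abs}}$ with $|V(M_{\mathrm{abs}})|\le\epsilon n$ such that $H[V(M_{\mathrm{abs}})\cup T]$ has a perfect matching for every $T\subseteq V(H)\setminus V(M_{\mathrm{abs}})$ with $|T|\le\epsilon' n$ and $k\mid|T|$. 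The standard route is to show that most pairs of $k$-sets are interchangeable through many absorbing gadgets; the lattice enters via a suitable partition $V(H)=V_1\cup\cdots\cup V_r$, since absorption of an arbitrary $k$-divisible remainder succeeds exactly when the lattice in $\mathbb{Z}^r$ generated by the index vectors $(|e\cap V_1|,\ldots,|e\cap V_r|)$ of edges is full, the divisibility barrier being precisely the obstruction to fullness. \textbf{Step 2 (almost-perfect matching).} Delete $V(M_{\mathrm{abs}})$; the degree hypothesis survives in $H'=H-V(M_{\mathrm{abs}})$, and by the known approximate-matching results for the relevant $(k,d)$ one finds a matching in $H'$ covering all but at most $\epsilon' n$ vertices. \textbf{Step 3 (finish).} The leftover set has size divisible by $k$ and at most $\epsilon' n$, so $M_{\mathrm{abs}}$ swallows it into a perfect matching. \textbf{Step 4 (extremal cases).} If instead $H$ is $\gamma$-close to the space barrier or to the divisibility barrier, a stability argument shows the degree hypothesis already forces enough edges outside $W$, or across the bipartition, to build a perfect matching directly.

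The main obstacle is Step 2: producing a matching covering all but $o(n)$ vertices under the conjectured degree condition is essentially the \emph{approximate} form of Conjecture~\ref{conj:mat}, which is itself open for many $(k,d)$, so the absorbing scheme only settles those ranges — e.g.\ $d$ not too small relative to $k$, small $k$, or $d=k-1$ — where the approximate threshold is known. A secondary obstacle is dovetailing Steps 1 and 4: the closeness parameter must be chosen so that every $H$ failing non-extremal absorption is close enough to \emph{one specific} barrier to be completed by hand, and since which barrier is extremal depends on whether $1-(1-1/k)^{k-d}$ exceeds $\tfrac12$, both stability analyses — and their interface when the two terms are nearly equal — have to be handled.
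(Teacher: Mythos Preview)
This statement is Conjecture~\ref{conj:mat}, presented in the paper as open; the paper does not prove it. What the paper does provide is the lower bound via the space and divisibility barriers (Constructions~\ref{con:sb} and~\ref{con:db}), which your two constructions reproduce correctly, and --- via Theorem~\ref{thm:AFHRRS}, quoted from~\cite{HPS} --- the reduction of the upper bound to determining the approximate threshold $c^*_d(k)$. That reduction is exactly your Step~2, and you correctly flag it as the genuine obstacle: the conjecture is equivalent to showing $c^*_d(k)=1-(1-1/k)^{k-d}$, which the paper notes (Theorem~\ref{thm:KOT}) is known only for $d\ge\min\{k/2,k-4\}$. So your proposal is not a proof but an accurate diagnosis of where the conjecture stands.

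Your upper-bound scheme is, however, more elaborate than what is needed to recover the known part (Theorem~\ref{thm:AFHRRS}). Since the hypothesis already gives $\delta_d(H)\ge(1/2+o(1))\binom{n-d}{k-d}$, any two vertices share $\Omega(n^{k-1})$ common neighbours and are therefore $(\beta,1)$-reachable; the ordinary Strong Absorbing Lemma of~\cite{HPS} then applies directly, with no need for the lattice-based refinement or a separate stability analysis (your Steps~1 and~4). The lattice machinery in this paper is deployed for the \emph{near} perfect matching problem (Theorem~\ref{thm:npm2}), precisely because there the relevant threshold can drop below $1/2$ and global reachability genuinely fails. In short: your outline is sound, but for Conjecture~\ref{conj:mat} itself the absorbing side is already settled by simpler means, and all the difficulty sits in the almost-perfect-matching step you singled out.
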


We remark that the quantities in the conjecture come from the so-called \emph{divisibility barrier} and the \emph{space barrier}. 

\begin{construction}[Space Barrier]\label{con:sb}
Let $V$ be a set of size $n$ and fix $S\subseteq V$ with $|S|=s < n/k$. Let $H(s)$ be the $k$-graph on $V$ whose edges are all $k$-sets that intersect $S$. It is easy to see that the size of a maximum matching in $H(s)$ is $s< n/k$ and $\delta_d(H(s)) = \binom{n-d}{k-d} - \binom{n-d-s}{k-d} = (1- (1- s/n)^{k-d}  +o(1))\binom{n-d}{k-d}$.
\end{construction}

Moreover, the maximal value of $\delta_d(H(s))$ is attained by $s = \lceil n/k \rceil - 1$, which gives the second term in Conjecture~\ref{conj:mat}.

\begin{construction}[Divisibility Barrier with two parts, \cite{TrZh12}]\label{con:db}
Fix integers $j, n$ such that $j\in \{0,1\}$ and $k \mid n$.
Let $V$ be a set of size $n$ with a partition $V_1\cup V_2$ such that $|V_1| \not\equiv j n/k$ mod 2. Let $H^j$ be the $k$-graph on $V$ whose edges are all $k$-sets $e$ such that $|e\cap V_1|\equiv j$ mod 2.
\end{construction}

To see why $H^j$ does not have a perfect matching, for any matching $M$ in $H^j$, by definition, $|V(M)\cap V_1|\equiv j n/k$ mod 2. This means $V(M)\cap V_1\neq V_1$ and thus $M$ is not perfect.
It is not hard to see that $\delta_d(H^j) \le (\frac12 +o(1)) \binom{n-d}{k-d}$ and the equality is attained when $|V_1|\approx |V_2|\approx n/2$.
Unfortunately, we do not know which exact values of $|V_1|$ and $|V_2|$ maximize the value of $\delta_d(H^j)$ -- this forms a challenging question, see \cite[Section 1]{TrZh12} for a discussion.

The general cases when $s<n/k$ have also been studied.
Note that in the general case we do not require that $k\mid n$.
By a simple greedy algorithm, R\"odl, Ruci\'nski and Szemer\'edi \cite{RRS09} proved that $m_{k-1}^s(k, n)= s$ for all $s\le \lfloor n/k \rfloor - (k-2)$. 
Recently, the author \cite{Han14_mat} extended this to all $s<n/k$, verifying a conjecture of R\"odl, Ruci\'nski and Szemer\'edi.
Note that this is the best one can do, because when $k\mid n$, $m_{k-1}^{n/k}(k, n)\approx n/2$ by the main result of \cite{RRS09}.
Moreover, K\"uhn, Osthus and Treglown \cite{KOT} determined $m_1^s(3,n)$ for all $s\le n/k$.
Bollob\'as, Daykin and Erd\H{o}s \cite{BDE76} determined $m_1^s(k,n)$ for $s< n/2k^3-1$.
Recently, K\"uhn, Osthus and Townsend \cite{KOTo} determined $m_d^s(k,n)$ asymptotically for $1\le d\le k-2$ and $s\le \min\{ n/2(k-d), (1-o(1))n/k \}$.

\subsection{Near perfect matchings}
In this paper we are interested in the case $s = \lfloor n/k \rfloor$. We write $m_d'(k, n): = m_d^{\lfloor n/k \rfloor} (k,n)$, namely, the minimum $d$-degree threshold forcing the existence of a (near) perfect matching.

To state our results, we need to define the threshold for almost perfect matchings. For integers $1\le d\le k-1$, let
\[
c^*_d(k) := \lim_{\a \rightarrow 0} \lim_{n\rightarrow \infty} \left(m_d^{\lfloor n/k - \a n \rfloor}(k,n)/\binom{n-d}{k-d}\right).
\]

The following theorem follows from \cite[Lemma 2.4]{HPS} and the definition of $c^*_d(k)$.
\begin{theorem}\cite{HPS} \label{thm:AFHRRS}
For $1\le d\le k-1$ and $k \mid n$, we have
\[
m_d^{n/k}(k,n) = \left(\max \left\{ 1/2, c^*_d(k) \right\} +o(1)\right)\binom{n-d}{k-d}.
\]
\end{theorem}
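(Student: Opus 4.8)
\emph{The plan.} I would prove matching lower and upper bounds on $m_d^{n/k}(k,n)$; the lower bound comes from two extremal constructions (one for each term of the maximum) and the upper bound runs the absorbing method, producing an almost perfect matching via the definition of $c^*_d(k)$ and then absorbing the leftover via \cite[Lemma 2.4]{HPS}. For the term $1/2$ in the lower bound I would take the divisibility barrier $H^j$ of Construction~\ref{con:db} with $|V_1|,|V_2|$ as close to $n/2$ as the parity requirement $|V_1|\not\equiv jn/k\pmod 2$ permits, so $\big||V_1|-|V_2|\big|\le 2$; for any $d$-set $S$ the value $\deg_{H^j}(S)$ is one of the two parity-restricted partial sums of the Vandermonde identity $\sum_{\ell}\binom{|V_1\setminus S|}{\ell}\binom{|V_2\setminus S|}{k-d-\ell}=\binom{n-d}{k-d}$, and since the corresponding alternating sum is $O(n^{(k-d)/2})$ each partial sum equals $(1/2+o(1))\binom{n-d}{k-d}$, so $\delta_d(H^j)=(1/2+o(1))\binom{n-d}{k-d}$ while $H^j$ has no perfect matching. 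For the term $c^*_d(k)$ I would unwind its definition: given $\varepsilon>0$ there is $\alpha>0$ with $\lim_{n\to\infty}m_d^{\lfloor n/k-\alpha n\rfloor}(k,n)/\binom{n-d}{k-d}>c^*_d(k)-\varepsilon$, so for all large $n$ there is an $n$-vertex $k$-graph $H$ with $\delta_d(H)\ge(c^*_d(k)-\varepsilon-o(1))\binom{n-d}{k-d}$ and no matching of size $\lfloor n/k-\alpha n\rfloor$, hence (as $\alpha>0$) no near perfect matching and \emph{a fortiori} no perfect matching. Taking the better of the two constructions gives $m_d^{n/k}(k,n)\ge(\max\{1/2,c^*_d(k)\}-o(1))\binom{n-d}{k-d}$.

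\emph{Upper bound.} Write $\mu=\max\{1/2,c^*_d(k)\}$, fix $\varepsilon>0$, and suppose $\delta_d(H)\ge(\mu+\varepsilon)\binom{n-d}{k-d}$ with $k\mid n$. Since $\mu\ge 1/2$, and since for $d\ge 2$ the bound $\delta_d(H)\ge c\binom{n-d}{k-d}$ forces $\delta_1(H)\ge c\binom{n-1}{k-1}$ by a standard averaging argument, I would first apply \cite[Lemma 2.4]{HPS} to extract a small absorbing set $A$ with $|A|\le\xi n$ such that $H[A\cup W]$ has a perfect matching for every $W\subseteq V(H)\setminus A$ with $|W|\le\gamma n$ and $k\mid|A\cup W|$. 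In $H'=H-A$, on $n'=n-|A|$ vertices, deleting $A$ removes at most $|A|\binom{n-d-1}{k-d-1}\le 2k\xi\binom{n-d}{k-d}$ edges through any fixed $d$-set, so once $\xi$ is small enough $\delta_d(H')\ge(\mu+\varepsilon/2)\binom{n'-d}{k-d}\ge(c^*_d(k)+\varepsilon/2)\binom{n'-d}{k-d}$. Invoking the definition of $c^*_d(k)$ once more I would fix $\alpha>0$ small enough that $m_d^{\lfloor m/k-\alpha m\rfloor}(k,m)<(c^*_d(k)+\varepsilon/4)\binom{m-d}{k-d}$ for all large $m$, and also $k\alpha<\gamma/2$; then for $n$ large $\delta_d(H')>m_d^{\lfloor n'/k-\alpha n'\rfloor}(k,n')$, so $H'$ has a matching $M'$ of size $\lfloor n'/k-\alpha n'\rfloor$, leaving a set $W$ of at most $k\alpha n+k\le\gamma n$ uncovered vertices. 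Finally $|A\cup W|=n-k|M'|\equiv 0\pmod k$, so $H[A\cup W]$ has a perfect matching $M''$ and $M'\cup M''$ is a perfect matching of $H$; letting $\varepsilon\to 0$ gives $m_d^{n/k}(k,n)\le(\mu+o(1))\binom{n-d}{k-d}$, which with the lower bound proves the theorem.

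\emph{Main obstacle.} The one genuinely delicate point is the bookkeeping around the nested limits in the definition of $c^*_d(k)$: for a prescribed error one must pin down a \emph{single} $\alpha>0$ that simultaneously makes the almost-perfect-matching threshold of the reduced hypergraph $H'$ drop below $\delta_d(H')$ and keeps the leftover set $W$ within the absorbing capacity $\gamma n$, and only afterwards send $n\to\infty$ with the hierarchy $1/n\ll\alpha\ll\xi,\gamma\ll\varepsilon$ frozen. Everything else --- the Vandermonde estimate for $\delta_d(H^j)$, the passage from $\delta_d$ to $\delta_1$, the edge count for $\delta_d(H')$, and the parity identity $|A\cup W|=n-k|M'|$ --- is routine, and the absorbing lemma \cite[Lemma 2.4]{HPS} is used as a black box.
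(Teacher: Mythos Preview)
Your proposal is correct and takes essentially the same approach as the paper, which simply states that the theorem ``follows from \cite[Lemma 2.4]{HPS} and the definition of $c^*_d(k)$'' without giving further details. You have filled in exactly those details: the divisibility barrier for the $1/2$ lower bound, the monotonicity $m_d^{n/k}(k,n)\ge m_d^{\lfloor n/k-\alpha n\rfloor}(k,n)$ together with the definition of $c^*_d(k)$ for the other lower bound, and the Strong Absorbing Lemma plus an almost-perfect matching (from the definition of $c^*_d(k)$) for the upper bound.
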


Let $\ell \equiv n$ mod $k$.
For any $k$-graph $H$ on $n$ vertices with $\delta_d(H)\ge \left(\max \left\{ 1/2, c^*_d(k) \right\} +o(1)\right)\binom{n-d}{k-d}$, we delete arbitrary $\ell$ vertices of $V(H)$ and thus $\delta_d(H)$ decreases at most $\ell\binom{n-d-1}{k-d-1}=o(\binom{n-d}{k-d})$. Then we can find a perfect matching by applying Theorem~\ref{thm:AFHRRS} and get a near perfect matching of $H$. This gives the following corollary.
The lower bound is by the definition of $c^*_d(k)$.
\begin{corollary} \label{cor:HPS}
For $1\le d\le k-1$, we have
\[
\left(c^*_d(k) +o(1)\right)\binom{n-d}{k-d}\le m_d'(k,n) \le \left(\max \left\{ 1/2, c^*_d(k) \right\} +o(1)\right)\binom{n-d}{k-d}.
\]
\end{corollary}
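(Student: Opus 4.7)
The corollary is essentially bookkeeping around Theorem~\ref{thm:AFHRRS} and the definition of $c^*_d(k)$, and my plan follows the sketch already in the paragraph preceding the statement.

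For the upper bound, I would write $\ell$ for the residue $n\bmod k$, so $0\le \ell\le k-1$. Given a $k$-graph $H$ on $n$ vertices with $\delta_d(H)\ge (\max\{1/2,c_d^*(k)\}+o(1))\binom{n-d}{k-d}$, pick any set $L\subseteq V(H)$ of $\ell$ vertices and set $H':=H-L$. Then $H'$ has $n':=n-\ell$ vertices with $k\mid n'$, and for every $d$-set $S\subseteq V(H')$ we have
\[
\deg_{H'}(S)\ge \deg_H(S)-\ell\binom{n-d-1}{k-d-1}.
\]
Since $\ell<k$ is a constant, $\ell\binom{n-d-1}{k-d-1}=o\bigl(\binom{n-d}{k-d}\bigr)$, so $\delta_d(H')\ge (\max\{1/2,c_d^*(k)\}+o(1))\binom{n'-d}{k-d}$. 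Applying Theorem~\ref{thm:AFHRRS} to $H'$ yields a perfect matching of $H'$, which is a matching in $H$ of size $n'/k=\lfloor n/k\rfloor$, i.e.\ a near perfect matching. This gives $m_d'(k,n)\le (\max\{1/2,c_d^*(k)\}+o(1))\binom{n-d}{k-d}$.

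For the lower bound, I would observe that for any $\alpha>0$, a near perfect matching of size $\lfloor n/k\rfloor$ is at least as large as one of size $\lfloor n/k-\alpha n\rfloor$, so any $k$-graph that fails to contain a matching of the smaller size also fails to contain a near perfect matching. Hence $m_d'(k,n)\ge m_d^{\lfloor n/k-\alpha n\rfloor}(k,n)$ for every $\alpha>0$. Dividing by $\binom{n-d}{k-d}$, letting $n\to\infty$ and then $\alpha\to 0$, the right-hand side tends to $c_d^*(k)$ by definition, so $m_d'(k,n)\ge (c_d^*(k)+o(1))\binom{n-d}{k-d}$.

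There is no real obstacle here: both directions are formal consequences of Theorem~\ref{thm:AFHRRS} and the definition of $c_d^*(k)$. The only mild point to verify carefully is that deleting $\ell=O(1)$ vertices only perturbs the minimum $d$-degree by a lower-order term, which is immediate from the crude bound above.
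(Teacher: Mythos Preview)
Your proposal is correct and matches the paper's own argument essentially verbatim: for the upper bound you delete $\ell$ vertices, observe the $d$-degree drops by at most $\ell\binom{n-d-1}{k-d-1}=o\bigl(\binom{n-d}{k-d}\bigr)$, and apply Theorem~\ref{thm:AFHRRS}; for the lower bound you invoke the definition of $c_d^*(k)$. There is nothing to add.
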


Corollary~\ref{cor:HPS} implies that if $c^*_d(k)\ge 1/2$, then $m_d'(k,n)=\left(c^*_d(k) +o(1)\right)\binom{n-d}{k-d}$, which 'coincides' the value of $m_d^{n/k}(k,n)$.
So it is interesting to study $m_d'(k,n)$ when $c^*_d(k)< 1/2$.
However, the value of $c^*_d(k)$ for $d< k/2$ is still wide open.
Indeed, Construction~\ref{con:sb} shows that $c^*_d(k) \ge 1 - (1 - 1/k)^{k-d}$.
The authors of \cite{KOTo} conjectured that $c^*_d(k) = 1 - (1 - 1/k)^{k-d}$ and verified the cases when $d\ge k/2$. 
Also, it is shown by the result of \cite{AFHRRS} that $c^*_d(k) = 1 - (1 - 1/k)^{k-d}$ for $1\le k-d\le 4$.
\begin{theorem}\cite{AFHRRS, KOTo} \label{thm:KOT}
For positive integers $k, d$ such that $k\ge 3$ and $\min\{k/2, k-4\}\le d\le k-1$, we have
\[
c^*_d(k)= 1- \left(1- \frac{1}{k} \right)^{k-d} .
\]
\end{theorem}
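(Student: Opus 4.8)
My plan is to prove the two inequalities of Theorem~\ref{thm:KOT} separately, the lower bound being routine and the upper bound being the real content. For $c^*_d(k)\ge 1-(1-1/k)^{k-d}$, apply Construction~\ref{con:sb} with $s=\lfloor n/k-\alpha n\rfloor-1$; the resulting $k$-graph $H(s)$ has no matching of size $\lfloor n/k-\alpha n\rfloor$ and satisfies $\delta_d(H(s))=(1-(1-s/n)^{k-d}+o(1))\binom{n-d}{k-d}$, and since $s/n\to 1/k-\alpha$ as $n\to\infty$ this yields $m_d^{\lfloor n/k-\alpha n\rfloor}(k,n)\ge(1-(1-1/k+\alpha)^{k-d}+o(1))\binom{n-d}{k-d}$; letting first $n\to\infty$ and then $\alpha\to 0$ gives the bound. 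It remains to prove the matching upper bound $c^*_d(k)\le 1-(1-1/k)^{k-d}$ under the hypothesis $\min\{k/2,k-4\}\le d\le k-1$.

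For the upper bound the plan is to pass to fractional matchings. Recall that a fractional matching is a weighting $w\colon E(H)\to[0,1]$ with $\sum_{e\ni v}w(e)\le 1$ for all $v$, of size $\sum_e w(e)$, and that by LP duality its maximum size $\nu^*(H)$ equals the minimum size $\tau^*(H)$ of a fractional vertex cover $y\colon V(H)\to[0,1]$, i.e.\ a nonnegative $y$ with $\sum_{v\in e}y(v)\ge 1$ for every $e\in E(H)$. The first step is the standard conversion lemma, a consequence of the weak hypergraph regularity lemma: for every $\mu>0$ there are $\eta>0$ and $n_0$ so that any $k$-graph $H$ on $n\ge n_0$ vertices with $\delta_d(H)\ge\mu\binom{n-d}{k-d}$ and $\nu^*(H)\ge(1-\eta)n/k$ contains an integer matching of size at least $(1-\mu)\,n/k$; one regularizes $H$, rounds the fractional matching in the reduced hypergraph, and cleans up the exceptional vertices with the degree condition --- which is usable here because $1-(1-1/k)^{k-d}>0$, so the degrees we are given are a positive fraction of $\binom{n-d}{k-d}$. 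Granting this, it suffices to prove the purely fractional statement: for every fixed $\epsilon>0$, $\delta_d(H)\ge(1-(1-1/k)^{k-d}+\epsilon)\binom{n-d}{k-d}$ forces $\nu^*(H)\ge(1-\eta(\epsilon))n/k$ with $\eta(\epsilon)\to 0$ as $\epsilon\to 0$. Indeed, feeding this into the conversion lemma produces an integer matching of size $\ge(1-\mu)n/k$ for any fixed $\mu>0$ (choose $\epsilon$ small in terms of $\mu$), which is $\ge\lfloor n/k-\alpha n\rfloor$ once $\mu<k\alpha$; letting $n\to\infty$ and then $\alpha\to 0$ gives $c^*_d(k)\le 1-(1-1/k)^{k-d}$.

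I would prove the fractional statement by contradiction through LP duality. If $\nu^*(H)<(1-\eta)n/k$, then $H$ has a fractional vertex cover $y$ of weight $<(1-\eta)n/k$, so $H$ is a subgraph of $H_y$, the $k$-graph of all $k$-sets $e$ with $\sum_{v\in e}y(v)\ge 1$, and therefore $\delta_d(H)\le\delta_d(H_y)$. A computation --- whose content is that the minimum over $d$-sets is attained on $d$ vertices of (essentially) smallest weight, which one may normalize to be negligible --- shows that $\delta_d(H_y)/\binom{n-d}{k-d}$ tends to a probabilistic optimum, namely
\[
\sup_{Z}\ \Pr\big[\,Z_1+\cdots+Z_{k-d}\ge 1\,\big],
\]
where the supremum is over nonnegative random variables $Z$ with $\mathbb{E}[Z]\le 1/k$ and $Z_1,\dots,Z_{k-d}$ are i.i.d.\ copies of $Z$. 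The bound I need is exactly that this supremum equals $1-(1-1/k)^{k-d}$, attained by $Z\sim\mathrm{Bernoulli}(1/k)$ --- for which $H_y$ is precisely a space-barrier graph as in Construction~\ref{con:sb}.

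This extremal assertion is established, by genuinely different arguments, in the two regimes. For $d\ge k/2$ I would follow K\"uhn--Osthus--Townsend~\cite{KOTo}: group the vertices according to the size of $y(v)$ and combine the covering constraints with an averaging/convexity argument that goes through precisely because $d\ge k/2$, concluding that every cover of weight below $(1-\eta)n/k$ keeps $\delta_d(H_y)$ strictly below $(1-(1-1/k)^{k-d}+\epsilon)\binom{n-d}{k-d}$. For $k-d\le 4$ I would instead recognize the optimization over $Z$ as (a complementary form of) a conjecture of Samuels --- a best-possible Markov-type inequality for sums of independent nonnegative random variables --- which is known for at most four summands, and here there are exactly $k-d\le 4$ of them, so the bound follows as in~\cite{AFHRRS}. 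Since $d\ge k/2$ or $k-d\le 4$ holds exactly when $d\ge\min\{k/2,k-4\}$, the two regimes cover the stated range. The hard part is this last probabilistic inequality: for $d<k/2$ with $k-d>4$ it is an open instance of Samuels-type conjectures and no argument I know reaches that range, which is why Theorem~\ref{thm:KOT} is stated with a restriction on $d$; the remaining bookkeeping --- tracking the $o(1)$'s and the dependence of $\eta$ on $\epsilon$, $\mu$, $\alpha$ through the regularity step --- is routine.
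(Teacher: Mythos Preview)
The paper does not give its own proof of Theorem~\ref{thm:KOT}; it is quoted as a known result, with the case $d\ge k/2$ attributed to K\"uhn, Osthus and Townsend~\cite{KOTo} and the case $k-d\le 4$ to Alon, Frankl, Huang, R\"odl, Ruci\'nski and Sudakov~\cite{AFHRRS}. So there is no in-paper argument to compare against.

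Your sketch is a faithful high-level summary of how those two papers proceed: the lower bound from the space barrier, the reduction from integer to fractional matchings via weak regularity, and then LP duality to a fractional cover $y$, analysed either through the Samuels-type tail inequality for sums of at most four independent nonnegative variables (the \cite{AFHRRS} route when $k-d\le 4$) or by a direct fractional-cover argument (the \cite{KOTo} route when $d\ge k/2$). You also correctly identify why the hypothesis $d\ge\min\{k/2,k-4\}$ is needed. The only caveat is that your description of the \cite{KOTo} step is a little schematic; their argument for $d\ge k/2$ is not literally the probabilistic reformulation you wrote down but a more hands-on analysis of the cover, though the conclusion is the same. As a blind reconstruction of the cited proofs, your plan is sound.
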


When $k\nmid n$, the knowledge of $m_d'(k,n)$ is very limited.
The aforementioned results \cite{Han14_mat, KOT} included $m_{k-1}'(k, n)= \lfloor n/k \rfloor$ and $m_1'(3,n) = \binom{n-1}2 - \binom{\lceil 2n/3 \rceil}2 +1 = (5/9+o(1))\binom {n-1}2$.
In this paper we give new upper and lower bounds on $m_d'(k,n)$ for $k\ge 4$ and $1\le d\le k-2$.

\begin{theorem}\label{thm:npm2}
For $k\ge 4$, $1\le d\le k-2$ and $k\nmid n$, we have
\[
\left(\max \left\{ g(k,d,\ell), c^*_d(k) \right\} +o(1)\right)\binom{n-d}{k-d}\le m_d'(k,n) \le \left(\max \left\{ g(k,d,1), c^*_d(k) \right\} +o(1)\right)\binom{n-d}{k-d}.
\]
\end{theorem}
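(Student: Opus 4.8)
I would prove the two inequalities by different means: the lower bound by producing extremal $k$-graphs, and the upper bound by the lattice-based absorbing method, split into a ``generic'' case (a robust absorbing structure exists) and an ``extremal'' case ($H$ is close to a divisibility barrier).

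\textbf{Lower bound.} The term $c^*_d(k)$ is immediate from its definition: for every $\e>0$ there is $\a>0$ so that for all large $n$ some $k$-graph $H$ on $n$ vertices has $\delta_d(H)\ge (c^*_d(k)-\e)\binom{n-d}{k-d}$ and no matching of size $\lfloor n/k-\a n\rfloor$, hence none of size $\lfloor n/k\rfloor$. The term $g(k,d,\ell)$ comes from the near perfect matching divisibility barrier generalizing Construction~\ref{con:db}: split $V=V_1\cup V_2$ and let the edges be the $k$-sets $e$ with $|e\cap V_1|$ in a fixed residue class modulo some $r\ge 2$, choosing $r$ and $|V_1|,|V_2|$ so that, writing $t=\lfloor n/k\rfloor$, no sum of $t$ elements of that class lies in $[\,|V_1|-\ell,\ |V_1|\,]$; since any matching of size $t$ covers between $|V_1|-\ell$ and $|V_1|$ vertices of $V_1$, there is no near perfect matching. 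The essential new point, compared with the perfect matching barrier, is that $r=2$ is no longer admissible — that interval contains $\ell+1\ge 2$ consecutive integers and hence meets both parities — so one must enlarge the modulus, and optimising $r$ together with the part sizes against $\delta_d$ yields $g(k,d,\ell)\binom{n-d}{k-d}(1+o(1))$. Taking the better of the two constructions gives the lower bound.

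\textbf{Upper bound, generic case.} Assume $\delta_d(H)\ge (\max\{g(k,d,1),c^*_d(k)\}+\e)\binom{n-d}{k-d}$. Following the lattice-based scheme I would fix a bounded partition $\cP$ of almost all of $V(H)$ and form the robust edge lattice $L_{\cP}(H)$, which sits inside the lattice of integer vectors whose coordinate sum is a multiple of $k$. When $L_{\cP}(H)$ is complete, the absorbing lemma supplies a small matching $M_{abs}$ able to absorb any disjoint vertex set $W$ with $k\mid |W|$ and $|W|$ below the absorbing capacity (the index-vector condition being automatic by completeness). Deleting $V(M_{abs})$ leaves a $k$-graph still above the $c^*_d(k)$-threshold, so its definition furnishes a matching $M'$ whose uncovered set $U$ is small; one checks $|U\cup V(M_{abs})|\equiv\ell\pmod k$, so fixing any $W_0\subseteq U$ with $|W_0|=\ell$ and absorbing $U\setminus W_0$ into $M_{abs}$ produces a matching of $H$ covering $V\setminus W_0$ — a near perfect matching. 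If $L_{\cP}(H)$ has index larger than $1$ but $H$ is still not genuinely close to a divisibility barrier, the usual lattice manoeuvre restores completeness after relocating $o(n)$ vertices, reducing to the previous situation.

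\textbf{Upper bound, extremal case, and the main obstacle.} The remaining, and hardest, case is when $H$ is close to a divisibility barrier $D$; let $r$ be the effective modulus of this obstruction. If $r\le \ell+1$ (in particular $r=2$), then a near perfect matching exists: the $\ell\ge 1$ uncovered vertices can be allocated among the parts so as to hit any residue the obstruction forbids, and — since $\delta_d(H)$ strictly exceeds $\delta_d(D)$ — the edges by which $H$ deviates from $D$ make assembling such a matching possible. If $r\ge \ell+2$, then either those deviating edges already unblock a near perfect matching, or $H$ is in fact close to a denser barrier of strictly smaller modulus, and iterating this drives the modulus down to at most $\ell+1$, putting us back in the first subcase. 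I expect the technical core of the whole argument to be exactly this extremal analysis: passing from a general, possibly many-part divisibility obstruction to the two-part template, and bookkeeping how the effective modulus interacts with $\ell$ through the regularity/absorbing reduction. Because that reduction only controls the modulus up to $o(n)$ slack, the bound that survives for every residue is the worst one, $g(k,d,1)=\max_{\ell'}g(k,d,\ell')$, which is why $g(k,d,1)$ rather than $g(k,d,\ell)$ appears in the upper bound. Finally, the restriction $k\ge 4$, $1\le d\le k-2$ only excludes the cases $d=k-1$ and $k\le 3$, which are already known exactly.
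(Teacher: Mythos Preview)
Your lower bound is fine and matches the paper's Construction~\ref{con:db1} (modulus $\ell+2$). The upper-bound plan, however, diverges from the paper at the decisive point and contains a real gap. You propose a dichotomy ``complete lattice / close to a divisibility barrier'' and put the hard work into an extremal analysis that iteratively reduces the modulus of the obstruction. The paper does \emph{not} do any stability or extremal analysis. Instead, after merging parts until the robust lattice $L_{\cP}^{\mu}(H)$ is transferral-free (so necessarily incomplete unless $r=1$), it proves directly that this incomplete lattice is still rich enough for absorption: Proposition~\ref{prop:lattice} shows that for every $(k+1)$-set $U$ of leftover vertices there is some $i$ with $\bfi_{\cP}(U)-\bfu_i\in L_{\cP}^{\mu}(H)$, so one can peel off one vertex at a time and absorb the resulting $k$-sets (after index-correction via a small buffer matching $M_1$). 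The engine behind this is Claim~\ref{clm:vec_lat}: under $\delta_d(H)\ge (g(k,d,1)+\rho)\binom{n-d}{k-d}$ one has $(2,-2)\in L_{\cP}^{\mu}(H)$ when $r=2$ (and analogous vectors when $r=3$). The case analysis there is exactly where $g(k,d,1)$ enters --- if $(2,-2)\notin L$, then the cosets of $L$ force the neighbourhoods of $d$-sets to look like those in the mod-$3$ barrier $H_1^j(x)$, and \eqref{eq:n0} then contradicts the degree hypothesis.

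So your explanation that $g(k,d,1)$ appears because ``the reduction only controls the modulus up to $o(n)$ slack, so the worst residue survives'' is not the mechanism; $g(k,d,1)$ is precisely the threshold at which one can force $(2,-2)$ (equivalently, rule out a mod-$3$ coset obstruction) in the robust lattice, and it is this algebraic fact, not a stability argument, that drives the proof. Your extremal-case sketch (``either deviating edges unblock a near perfect matching, or reduce to smaller modulus'') is not a proof and would, if pursued, need a substantial new argument that the paper avoids entirely. Two smaller points: the paper first disposes of the ranges $d=1,\,k\ge 4$ and $d=2,\,k\ge 6$ via Corollary~\ref{cor:HPS} (there $c^*_d(k)>1/2\ge g(k,d,1)$), so only $\min\{3,k/2\}\le d\le k-2$ or $(k,d)=(5,2)$ need the lattice machinery; and the absorbing lemma used (Lemma~\ref{lem:abs}) absorbs $k$-sets whose index vector lies in $I_{\cP}^{\mu}(H)$, not arbitrary $k$-multiples, which is why the buffer matching $M_1$ and Proposition~\ref{prop:lattice} are essential even in the ``nice'' case.
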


The term $g(k,d,\ell)$ will be defined formally later.
Our second result concerns the case $d=k-2$ and $\ell\in \{2,\dots, k-1\}$.
Note that the approximate version of $k=3$ case was obtained in~\cite{HPS} and the exact vertex degree condition in~\cite{KOT}.

\begin{theorem}\label{thm:npm3}
For integers $k,\ell,n$ such that $k\ge 4$ and $n\equiv \ell$ mod $k$ for some $\ell\in \{2,\dots, k-1\}$, we have
\[
m_{k-2}'(k,n) = \left(1- \left(1- \frac{1}{k} \right)^{2} +o(1)\right)\binom{n-k+2}{2}=  \left(\frac{2k-1}{k^2} +o(1)\right)\binom{n-k+2}{2}.
\]
\end{theorem}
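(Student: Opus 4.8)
The lower bound is immediate. By Corollary~\ref{cor:HPS}, $m_{k-2}'(k,n)\ge(c^*_{k-2}(k)-o(1))\binom{n-k+2}{2}$, and since $k-2\ge\min\{k/2,k-4\}$ for every $k\ge4$, Theorem~\ref{thm:KOT} gives $c^*_{k-2}(k)=1-(1-1/k)^2$. So the entire content is the upper bound: fix $\epsilon>0$; for all large $n$ with $n\equiv\ell\pmod k$ and $2\le\ell\le k-1$, I must show that a $k$-graph $H$ on $n$ vertices with $\delta_{k-2}(H)\ge(1-(1-1/k)^2+\epsilon)\binom{n-k+2}{2}$ contains a matching of size $\lfloor n/k\rfloor$. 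This threshold is strictly below $\tfrac12\binom{n-k+2}{2}$ for every $k\ge4$, so the upper bound of Corollary~\ref{cor:HPS} is vacuous here; the point is to show that for $d=k-2$ and $\ell\ge2$ no divisibility barrier beats the almost-perfect-matching threshold.

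I would run the lattice-based absorbing method. Fix $0<\gamma\ll\mu\ll\epsilon$. Since $\delta_{k-2}(H)$ exceeds the almost-perfect-matching threshold $c^*_{k-2}(k)\binom{n-k+2}{2}$, the structural dichotomy of the method applies: taking a partition $\mathcal P=\{V_1,\dots,V_r\}$ into $r\le C(\epsilon)$ parts of linear size that minimises the $\mu$-robust edge-lattice $L=L_{\mathcal P}$ (spanned by differences of index vectors realised by at least $\mu n^k$ edges), either \emph{(A)} $L$ is complete, i.e.\ contains every $\mathbf e_i-\mathbf e_j$; or \emph{(B)} $L$ is a proper sublattice and, up to $o(n^k)$ edges, $H$ is supported on a divisibility-barrier configuration subordinate to $\mathcal P$. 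In case (A) the standard absorbing argument gives a matching $M_{\mathrm{abs}}$ with $|M_{\mathrm{abs}}|=O(\gamma n)$ absorbing every $W\subseteq V(H)\setminus V(M_{\mathrm{abs}})$ with $|W|\le\gamma^2 n$ and $k\mid|W|$. As $\delta_{k-2}(H-V(M_{\mathrm{abs}}))\ge(c^*_{k-2}(k)+\epsilon/2)\binom{n-k+2}{2}$, the almost-perfect-matching bound built into the definition of $c^*_{k-2}(k)$ gives a matching of $H-V(M_{\mathrm{abs}})$ missing at most $\gamma^2 n$ vertices; joined with $M_{\mathrm{abs}}$ this is a matching of $H$ missing a set $W$ with $|W|\le\gamma^2 n$ and $|W|\equiv\ell\pmod k$, and absorbing any $|W|-\ell$ (a nonnegative multiple of $k$) of its vertices into $M_{\mathrm{abs}}$ leaves exactly $\ell$ vertices uncovered.

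The work is in case (B). All robust edges satisfy a single congruence: with $L_0=\{\mathbf v\in\mathbb Z^r:\sum_i v_i=0\}$, $G=L_0/L$, and $g_j=\mathbf e_j-\mathbf e_r\bmod L\in G$ for $1\le j\le r-1$ (put $g_r:=0$), a $k$-set $e$ can be robust only if $\Phi(\mathbf i(e))=c_0$ for some fixed $c_0\in G$, where $\Phi(\mathbf v):=\sum_{j<r}v_j g_j$. For a $(k-2)$-set $S$, a pair $\{x,y\}$ with $x\in V_a$ and $y\in V_b$ completes $S$ to a robust edge only if $g_a+g_b=c_0-\Phi(\mathbf i(S))$; since the $V_i$ have linear size and $k\ge4$, every element of $S_G+S_G$, where $S_G:=\{0,g_1,\dots,g_{r-1}\}$, occurs as $\Phi(\mathbf i(S))$ for $\Omega(n^{k-2})$ sets $S$, so --- after discarding the $o(n^k)$ non-robust edges, which leaves all but $o(n^{k-2})$ of the $(k-2)$-sets with at least $(c^*_{k-2}(k)+\epsilon/2)\binom{n-k+2}{2}$ robust completions --- the degree hypothesis forces $c_0-(S_G+S_G)\subseteq S_G+S_G$ and, via the accompanying requirement that $\sum_{(a,b):\,g_a+g_b=t}|V_a||V_b|=\Omega(n^2)$ for every attained target $t$, pins the structure down to a short list: in particular it excludes a single congruence modulo $4$ (which always produces a $(k-2)$-set with no robust completion) and leaves only structures with $S_G+S_G=G$. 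For the matching itself, any matching $M$ of size $m=\lfloor n/k\rfloor$ missing a set $U$ satisfies $\sum_{j<r}|U\cap V_j|\,g_j=\sum_{j<r}|V_j|\,g_j-m\,c_0=:g^\ast\in G$, and conversely --- by a second application of the absorbing method, this time relative to $\mathcal P$ and the working lattice $L$ --- $H$ has a matching of size $m$ missing exactly $u_j$ vertices of each $V_j$ whenever $\mathbf u\ge 0$, $\sum_j u_j=\ell$, $u_j\le|V_j|$, and $\sum_{j<r}u_j g_j=g^\ast$. Such a $\mathbf u$ exists precisely because $\ell\ge 2$: since $g^\ast\in G=S_G+S_G$ one can write $g^\ast=\sum_{j<r}u'_j g_j$ with $u'_j\ge 0$ and $\sum_{j<r}u'_j\le 2\le\ell$, and then $\mathbf u=(u'_1,\dots,u'_{r-1},\ \ell-\sum_{j<r}u'_j)$ has all the required properties.

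\textit{Main obstacle.} Everything nontrivial is the degree computation in case (B): showing that $\delta_{k-2}(H)\ge(c^*_{k-2}(k)+\epsilon)\binom{n-k+2}{2}$ forces the surviving divisibility structure to have $g^\ast$ expressible using at most two of the generators $g_j$ --- equivalently, classifying the divisibility barriers relevant to $(k-2)$-degree, which is exactly what yields $g(k,k-2,\ell)\le c^*_{k-2}(k)$ for $\ell\ge 2$ --- and then verifying the second, coset-internal, absorbing step. The hypothesis $\ell\ge 2$ is sharp: for $\ell=1$ a configuration with $G\cong\mathbb Z/3$, which for large $k$ can be arranged to have $\delta_{k-2}$ strictly between $c^*_{k-2}(k)$ and $\tfrac12$, has no near perfect matching, so the threshold there is genuinely larger.
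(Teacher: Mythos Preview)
Your approach is substantially more elaborate than the paper's, and the hard part you flag is exactly where a gap remains.

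For $k\in\{4,5\}$ the paper simply quotes Corollary~\ref{cor:npm}: since $g(k,k-2,1)<1/3<(2k-1)/k^2=c^*_{k-2}(k)$ there, Theorem~\ref{thm:npm2} already gives matching upper and lower bounds. For $k\ge 6$ the paper abandons the lattice machinery entirely and uses a short, direct absorbing argument adapted from R\"odl--Ruci\'nski--Szemer\'edi. Call an edge $e$ disjoint from a $(k+2)$-set $S$ \emph{$S$-absorbing} if there exist disjoint edges $e_1,e_2$ with $|e_1\cap S|=k-2$, $|e_1\cap e|=2$, $|e_2\cap S|=4$, $|e_2\cap e|=k-4$; swapping $e$ for $\{e_1,e_2\}$ covers all of $S$ but frees two vertices of $e$, a net gain of $k$ covered vertices. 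A standard random-selection argument (using only $\delta_{k-2}(H)\ge c n^2$; the counting needs $k\ge 6$) produces a small matching $M'$ in which every $(k+2)$-set has $\Omega(n)$ absorbers. One then takes an almost-perfect matching in $H-V(M')$ via Theorem~\ref{thm:KOT} and repeatedly absorbs $(k+2)$-sets of leftover vertices; the hypothesis $\ell\ge 2$ enters only at the final step, where one needs $\ell+k\ge k+2$ uncovered vertices to form the last $S$. No partition, no lattice, no structural dichotomy.

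Your lattice route could in principle be pushed through, but case~(B) as written is not a proof. Two concrete problems. First, for large $k$ the threshold $(2k-1)/k^2$ tends to $0$, so Lemma~\ref{lem:P} bounds the number of parts only by $r\le\lfloor k^2/(2k-1)\rfloor\sim k/2$; the ``short list'' of surviving lattices grows with $k$ rather than being a bounded case check as in Claim~\ref{clm:vec_lat}. Second, your assertion that the degree condition ``excludes a single congruence modulo~$4$ (which always produces a $(k-2)$-set with no robust completion)'' is false for $k=4$: with $G\cong\mathbb Z/4$, $r=2$ and $c_0=2$, every $2$-set has a robust completion type and $\delta_2\approx\tfrac14\binom n2>0$; what actually rules this barrier out is the quantitative inequality $\tfrac14<\tfrac7{16}$, not a zero-degree set. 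So the classification you need is a genuine degree computation for each candidate lattice, not the combinatorial shortcut you sketched --- and that computation is precisely what the paper avoids with the elementary $(k+2)$-absorber.
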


By the previous results \cite{Han14_mat, KOTo}, it seems that $m_d'(k,n)$ is determined only by the space barrier. 
However, we give the following divisibility barrier construction which generalizes Construction~\ref{con:db}, showing that this is not always the case when $d\le k-2$.

\begin{construction}\label{con:db1}
Fix integers $\ell, j, n$ and a real number $0\le x\le 1$ such that $0\le \ell\le k-1$, $j\in \{0,1,\dots, \ell+1\}$, $n\equiv \ell$ mod $k$ and $xn \equiv \lfloor \frac nk \rfloor j + \ell + 1$ mod $\ell+2$.
Let $V$ be a set of size $n$ with a partition $V_1\cup V_2$ such that $|V_1| = xn$. Let $H_{\ell}^j(x)$ be the $k$-graph on $V$ whose edges are all $k$-sets $e$ such that $|e\cap V_1|\equiv j$ mod $\ell+2$.
\end{construction}

Let $M$ be a near perfect matching in $H_\ell^j(x)$ and thus $M$ leaves a set $U$ of exactly $\ell$ vertices uncovered. Suppose $|U\cap V_1| = i'$ for some $0\le i'\le \ell$, then we have $|V(M)\cap V_1| = |V_1| - i' \equiv \lfloor \frac nk \rfloor j + \ell + 1 - i'$ mod $\ell+2$. On the other hand, because $|e\cap V_1|\equiv j$ mod $\ell+2$ for every edge $e\in M$, we have $|V(M)\cap V_1| \equiv \lfloor \frac nk \rfloor j$ mod $\ell+2$. This is a contradiction because $1\le \ell+1-i'\le \ell+1$, and thus $H_\ell^j(x)$ contains no near perfect matching.

Similar as in Construction~\ref{con:db}, it is not clear which exact values of $|V_1|, |V_2|$ maximize 
$\delta_d(H_\ell^j(x))$ in Construction~\ref{con:db1}. 
Indeed, it seems even harder than its special case Construction~\ref{con:db} (when $\ell=0$) -- the maximum of $\delta_d(H_\ell^j(x))$ is not always achieved by the (almost) balanced bipartition, i.e., $x\approx 1/2$ (see Section 2).
For this reason we introduce the variable $x$ in Construction~\ref{con:db1} and focus on the major term by letting $|V_1| = x n$.
For $1\le d\le k-1$ and $0\le \ell\le k-1$, we define
\begin{equation*}
g(k, d, \ell) := \lim_{n\rightarrow \infty} \left(\max_{0\le j\le \ell+1} \max_{0\le x\le 1} \delta_d(H_\ell^j(x))/\binom{n-d}{k-d}\right).
\end{equation*}
We make the following conjecture.
As mentioned in the previous section, $g(k,d,0) = 1/2$, so Conjecture~\ref{conj:mat} is a special case (when $\ell=0$) of Conjecture~\ref{conj:npm}.

\begin{conjecture}\label{conj:npm}
For all $1\le d\le k-1$ and $n\equiv \ell$ mod $k$ for some $0\le \ell \le k-1$,
\begin{equation}\label{eq:conj}
m_d'(k,n) = \left(\max \left\{ g(k,d,\ell), 1- \left(1- \frac{1}{k} \right)^{k-d} \right\} +o(1)\right)\binom{n-d}{k-d}.
\end{equation}
\end{conjecture}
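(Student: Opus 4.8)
The plan is to establish \eqref{eq:conj} by proving matching lower and upper bounds, the upper bound being the substantive part.

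\emph{Lower bound.} The first term is produced by Construction~\ref{con:db1}: optimising over the admissible $j$ and $x$ gives, for every large $n\equiv\ell\pmod k$, a $k$-graph $H_\ell^j(x)$ on $n$ vertices with no near perfect matching and $\delta_d(H_\ell^j(x))=(g(k,d,\ell)+o(1))\binom{n-d}{k-d}$ by the definition of $g$. The second term is produced by Construction~\ref{con:sb} with $s=\lfloor n/k\rfloor-1$: the maximum matching of $H(s)$ has size $s<\lfloor n/k\rfloor$, so $H(s)$ has no near perfect matching, while $\delta_d(H(s))=\binom{n-d}{k-d}-\binom{n-d-s}{k-d}=(1-(1-1/k)^{k-d}+o(1))\binom{n-d}{k-d}$ since $s/n\to1/k$. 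Taking the denser of the two extremal hypergraphs gives the stated lower bound.

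\emph{Upper bound via the lattice-based absorbing method.} Fix $\e>0$ and assume $\delta_d(H)\ge(\max\{g(k,d,\ell),1-(1-1/k)^{k-d}\}+\e)\binom{n-d}{k-d}$; we want a matching of size $\lfloor n/k\rfloor$. As is standard for this method, split according to whether $H$ lies in a \emph{non-extremal} or an \emph{extremal} regime. In the non-extremal case, first run an absorbing step: produce a matching $M_0$ of size $o(n)$ such that for every vertex set $W$ disjoint from $V(M_0)$ with $|W|$ at most a small constant times $|V(M_0)|$ and $|W|\equiv0\pmod k$, the induced subgraph $H[V(M_0)\cup W]$ has a perfect matching. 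Existence of $M_0$ comes from a random-greedy selection of constant-size absorbers, and this is exactly where the lattice enters: choosing a suitable partition $\cP$ of $V(H)$, the edge-lattice generated by the index vectors $\bfi_\cP(e)$ of edges $e$ must span the full transversal lattice, and the assumption that $\delta_d(H)$ strictly exceeds \emph{both} barrier values is used to rule out every proper sublattice — this is precisely the content of Constructions~\ref{con:sb},~\ref{con:db} and~\ref{con:db1}. With $M_0$ fixed, set $H'=H-V(M_0)$; since the conjecture posits $c^*_d(k)=1-(1-1/k)^{k-d}$ and $\delta_d(H')\ge(1-(1-1/k)^{k-d}+\e/2)\binom{n-d}{k-d}$, one finds in $H'$ a matching covering all but $o(n)$ vertices, shrinks the leftover to a set $W$ so that $|V(M_0)\cup W|$ has exactly $\ell$ vertices outside a perfect matching, and then $M_0$ absorbs the relevant part of $W$, yielding a near perfect matching.

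\emph{Extremal cases and the main obstacle.} If $H$ is within $\gamma n^k$ edges of some $H(s)$, a stability/greedy argument using the $\e$-excess over the space-barrier value recovers a near perfect matching directly; if $H$ is within $\gamma n^k$ edges of some $H_\ell^j(x)$, the bipartition $V_1\cup V_2$ is essentially forced, and one argues that the edges violating $|e\cap V_1|\equiv j\pmod{\ell+2}$, together with the strict excess of $\delta_d(H)$ over $g(k,d,\ell)$, either correct the residue of $|V(M)\cap V_1|$ or contradict the degree hypothesis. The serious difficulty is twofold. First, the upper bound is only as strong as our knowledge of $c^*_d(k)$: because $c^*_d(k)\ge1-(1-1/k)^{k-d}$ always and equality is \emph{open} for $d<k/2$ (known only in the ranges of Theorem~\ref{thm:KOT} and for $k-d\le4$), a complete proof of Conjecture~\ref{conj:npm} must in particular settle this long-standing question. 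Second, even granting $c^*_d(k)=1-(1-1/k)^{k-d}$, the divisibility-barrier analysis for $\ell\ge1$ is genuinely harder than the $\ell=0$ case, since the maximising bipartition need not have $x\approx1/2$, so both the exact evaluation of $g(k,d,\ell)$ and the stability step near $H_\ell^j(x)$ require new input; the cases settled here (Theorems~\ref{thm:npm2} and~\ref{thm:npm3}) are exactly those where $c^*_d(k)$ is already known and either $g$ dominates cleanly or $d=k-2$ keeps the lattice computation tractable.
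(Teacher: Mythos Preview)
This statement is Conjecture~\ref{conj:npm}, which the paper explicitly leaves \emph{open}; there is no proof in the paper to compare against, only the partial verification recorded in Corollary~\ref{cor:npm} (via Theorems~\ref{thm:npm2} and~\ref{thm:npm3}). Your lower-bound paragraph is correct and is exactly how the paper motivates the conjecture.

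Your upper-bound sketch is not a proof, and beyond the two obstacles you yourself flag it contains a genuine error in the absorbing step. You assert that under the degree hypothesis the robust edge-lattice ``must span the full transversal lattice, and the assumption that $\delta_d(H)$ strictly exceeds both barrier values is used to rule out every proper sublattice.'' This is false: Construction~\ref{con:db1} shows that proper sublattices persist at (and slightly above) the conjectured threshold, and in the cases the paper \emph{does} settle, $L_{\cP}^{\mu}(H)$ is typically not full --- Claim~\ref{clm:vec_lat} only secures $(2,-2)$ when $r=2$ (or $(-2,1,1),(1,-2,1),(1,1,-2)$ when $r=3$), not the transferral $(1,-1)$. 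Consequently an absorber $M_0$ that swallows \emph{any} $W$ with $|W|\equiv 0\pmod k$ cannot exist in general: that is the perfect-matching device and it requires the full transversal lattice. The paper's mechanism is different and is tailored to near perfect matchings: Proposition~\ref{prop:lattice} shows only that every $(k{+}1)$-set $U\subseteq V\setminus V_0$ satisfies $\bfi_{\cP}(U)-\bfu_i\in L_{\cP}^{\mu}(H)$ for \emph{some} $i$, so in each absorption round one vertex is deliberately discarded (using an auxiliary edge from a reservoir $M_1$ to realise the missing coordinate), and the process terminates with exactly $\ell$ leftover vertices. Even after replacing your $M_0$ by this correct device, the argument still presupposes $c^*_d(k)=1-(1-1/k)^{k-d}$ and a working analogue of Proposition~\ref{prop:lattice} for general $\ell$; the first is open for $d<\min\{k/2,k-4\}$ and the second is precisely the bottleneck the paper identifies in its concluding remarks, so at best one recovers the range of Corollary~\ref{cor:npm}, not the full conjecture.
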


Regarding the quantitative values of $g(k,d,\ell)$, we will show that $g(k,d,1)$ can be determined explicitly by a simple optimization (see Section 2) and we have the following bounds on $g(k,d,\ell)$.

\begin{proposition}\label{prop:gkdll}
Let $1\le d\le k-1$ and $0\le \ell \le k-1$. Then
\begin{itemize}
\item[(i)] When $d\le \ell+1$, $g(k,d,\ell)\le 1/(d+1)$ and when $d\ge \ell+1$, $g(k,d,\ell)\le 1/{(\ell+2)}$.
\item[(ii)] $\left\lfloor \frac{2^{k-d}}3 \right\rfloor 2^{d-k}\le g(k,d,1) <\frac13$ for $d>1$.
\end{itemize}
\end{proposition}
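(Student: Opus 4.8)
The plan is to reduce everything to an elementary analysis of the limiting $d$-degree of the hypergraphs $H_\ell^j(x)$. Put $m:=k-d$ and, for $0\le r\le\ell+1$, define
\[
P_r(x):=\sum_{\substack{0\le b\le m\\ b\equiv r\ (\mathrm{mod}\ \ell+2)}}\binom{m}{b}\,x^b(1-x)^{m-b},
\]
so that $P_0(x)+\cdots+P_{\ell+1}(x)=(x+(1-x))^m=1$ and each $P_r(x)\ge0$ on $[0,1]$. A direct computation shows that a $d$-set $S$ with $|S\cap V_1|=a$ satisfies
\[
\deg_{H_\ell^j(x)}(S)=\sum_{\substack{0\le b\le m\\ b\equiv j-a\ (\mathrm{mod}\ \ell+2)}}\binom{|V_1|-a}{b}\binom{|V_2|-d+a}{m-b}=\left(P_{j-a}(x)+O(1/n)\right)\binom{n-d}{k-d},
\]
with the error uniform in $x\in[0,1]$, $0\le a\le d$ and $j$ (indices of $P$ read mod $\ell+2$; here $|V_1|=xn$, $|V_2|=(1-x)n$). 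Since the congruence condition on $xn$ in Construction~\ref{con:db1} only restricts $x$ to a subset of $[0,1]$ whose mesh tends to $0$, continuity of the $P_r$ gives
\[
g(k,d,\ell)=\max_{0\le x\le1}\ \max_{0\le j\le\ell+1}\ \min_{0\le a\le d}P_{j-a}(x).
\]

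For (i): as $a$ runs over $0,1,\dots,d$ the index $j-a$ runs over $d+1$ consecutive residues mod $\ell+2$. If $d\le\ell+1$ these are pairwise distinct, so $\min_{0\le a\le d}P_{j-a}(x)$ is the smallest of $d+1$ distinct nonnegative summands of $\sum_rP_r(x)=1$, hence $\le1/(d+1)$. If $d\ge\ell+1$ the indices $j-a$ exhaust $\mathbb{Z}_{\ell+2}$, so $\min_{0\le a\le d}P_{j-a}(x)=\min_{0\le r\le\ell+1}P_r(x)\le1/(\ell+2)$. In both cases $g(k,d,\ell)$ obeys the claimed bound.

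For (ii), $\ell=1$ so $\ell+2=3$, and since $d\ge2$ every class mod $3$ is hit by some $a$, whence $g(k,d,1)=\max_{0\le x\le1}\min_{0\le r\le2}P_r(x)$. \emph{Lower bound.} Take $x=1/2$ and use the roots-of-unity filter with $\omega=e^{2\pi i/3}$; as $1+\omega=e^{i\pi/3}$,
\[
P_r(1/2)=2^{-m}\!\!\sum_{b\equiv r\ (\mathrm{mod}\ 3)}\!\!\binom{m}{b}=2^{-m}\cdot\frac13\left(2^m+2\cos\frac{\pi(m-2r)}{3}\right),
\]
and a short case-check on $m\bmod 6$, using $\sum_b\binom mb=2^m$, gives $\min_{0\le r\le2}\sum_{b\equiv r\ (\mathrm{mod}\ 3)}\binom mb=\lfloor 2^m/3\rfloor$; thus $g(k,d,1)\ge\min_r P_r(1/2)=\lfloor 2^{k-d}/3\rfloor\,2^{d-k}$. \emph{Upper bound.} For a $d$-set $S_a$ with $|S_a\cap V_1|=a\in\{0,1,2\}$, writing $T_r:=\sum_{b\equiv r\ (\mathrm{mod}\ 3)}\binom{|V_1|}{b}\binom{|V_2|}{m-b}$ we have $\deg(S_a)\le T_{j-a}$ and $T_0+T_1+T_2=\binom nm$ (Vandermonde), so $\delta_d(H_1^j(x))\le\min_{0\le r\le2}T_r$. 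The roots-of-unity filter applied to $T_r$, retaining the leading power of $n$, gives $T_r/\binom nm\to\frac13\left(1+2\,\mathrm{Re}(\omega^{-r}B^m)\right)$, where $B:=1-x+x\omega$ has $|B|^2=1-3x+3x^2\ge\frac14$ on $[0,1]$. Writing $B^m=|B|^m e^{i\psi}$, the three numbers $\mathrm{Re}(\omega^{-r}B^m)=|B|^m\cos(\psi-2\pi r/3)$, $r=0,1,2$, include one with $\cos(\psi-2\pi r/3)\le-\frac12$, since one of the three angles $\psi-2\pi r/3$ lies in $[2\pi/3,4\pi/3]$ modulo $2\pi$; hence $\min_rT_r/\binom nm\le\frac13(1-|B|^m)+o(1)\le\frac13(1-2^{-m})+o(1)$, uniformly in $x$. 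Letting $n\to\infty$ (and $m=k-d\ge1$) yields $g(k,d,1)\le\frac13(1-2^{-m})<\frac13$.

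The two points needing care are the uniformity in $x$ of the $O(1/n)$ error in the degree estimate --- without it the strict bound $\frac13(1-2^{-m})$ could collapse to $\le1/3$ after passing to the limit --- and the elementary geometric fact that three angles $2\pi/3$ apart cannot all have cosine exceeding $-\frac12$; the remaining bookkeeping (integrality of $xn$, the congruence constraint of Construction~\ref{con:db1}, and the relation of $g(k,d,\ell)$ to the finite-$n$ maxima) is routine.
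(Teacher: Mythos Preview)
Your proof is correct. Part (i) and the lower bound in (ii) follow exactly the paper's line: pass to the limiting polynomials $P_r(x)$ (the paper's $h_i$), use that they are nonnegative and sum to $1$, and evaluate at $x=1/2$ via the roots-of-unity filter to get $\lfloor 2^{k-d}/3\rfloor\,2^{d-k}$.

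For the strict upper bound in (ii) you take a genuinely different route. The paper argues by contradiction: if $g(k,d,1)=1/3$ there would be $x_0\in[0,1]$ with $h_0(x_0)=h_1(x_0)=h_2(x_0)=1/3$, whence $(x_0\omega+1-x_0)^{k-d}=0$, forcing $x_0=1/(1-\omega)\notin\mathbb{R}$. You instead give a quantitative bound: writing $B=1-x+x\omega$ with $|B|^2=1-3x+3x^2\ge 1/4$, the identity $P_r(x)=\tfrac13(1+2|B|^m\cos(\psi-2\pi r/3))$ together with the elementary fact that among three angles spaced by $2\pi/3$ one has cosine $\le -\tfrac12$ yields $\min_r P_r(x)\le \tfrac13(1-|B|^m)\le \tfrac13(1-2^{-(k-d)})$. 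This is the same generating-function input but squeezed harder: you get the explicit upper bound $g(k,d,1)\le \tfrac13(1-2^{d-k})$, which is strictly sharper than the paper's conclusion $g(k,d,1)<\tfrac13$ (and, incidentally, shows that the lower bound at $x=1/2$ is tight up to a factor less than $2$). The paper's argument is shorter and avoids any discussion of uniformity in $x$; your argument gives more information at the cost of the extra trigonometric observation. Your detour through the finite-$n$ quantities $T_r$ in the upper bound is unnecessary once you have established the formula $g(k,d,1)=\max_x\min_r P_r(x)$, but it does no harm.
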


So when $d>1$ and $k-d$ tends to infinity, $g(k,d,1)$ tends to $1/3$. We believe that this is true in general, that is, when $k-d$ tends to infinity, $d>\ell$ and $\ell$ stays as a constant, $g(k,d,\ell)$ tends to $1/(\ell+2)$. 
Note that for $d=k-1$, Proposition~\ref{prop:gkdll}(ii) says that $0\le g(k,k-1,1)<1/3$. In fact we have $g(k,k-1,\ell)=0$ for all $1\le \ell\le k-1$.
This is shown in the following proposition.

\begin{proposition}\label{prop:gkd}
It holds that $g(k, d, \ell) = 0$ for $d\ge \max\{k-\ell, \lfloor k/2 \rfloor +1\}$.
\end{proposition}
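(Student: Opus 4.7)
The plan is to show that for every admissible pair $(j, x)$ (meeting the divisibility requirement of Construction~\ref{con:db1}), the hypergraph $H_\ell^j(x)$ contains a $d$-vertex set $S$ whose degree is $o\!\left(\binom{n-d}{k-d}\right)$---indeed usually $0$---so that $g(k,d,\ell) = 0$ follows directly from the definition. Writing $V = V_1 \cup V_2$ and $s = |S \cap V_1|$, I first note that any edge extending $S$ is determined by a choice of $t \in \{0, 1, \ldots, k-d\}$ vertices from $V_1 \setminus S$ satisfying $s + t \equiv j \pmod{\ell+2}$, so
\[
\deg_{H_\ell^j(x)}(S) \;=\; \sum_{\substack{0 \le t \le k-d \\ t \equiv j - s \pmod{\ell+2}}} \binom{|V_1| - s}{t}\binom{|V_2| - d + s}{k - d - t}.
\]

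The key observation to build on is that $d \ge k - \ell$ forces $k - d + 1 \le \ell + 1 < \ell + 2$, so at most one $t \in \{0, \ldots, k-d\}$ can satisfy the congruence. Calling a residue $s \pmod{\ell+2}$ \emph{good} when no such $t$ exists (so that any $d$-set in that class has degree $0$), the bad residues form the consecutive block $\{j - (k-d), \ldots, j\} \pmod{\ell+2}$ of size $k - d + 1$, leaving $\ell + d + 1 - k \ge 1$ good residues.

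For the generic range $|V_1|, |V_2| \ge d$, I would show that $\{0, 1, \ldots, d\}$ contains a good residue. If $d \ge \ell + 1$ this is automatic; otherwise the assumption $d \ge \lfloor k/2 \rfloor + 1 > k/2$ gives $2d > k$, so the number of good residues $\ell + d + 1 - k$ strictly exceeds the number $\ell + 1 - d$ of residues in $\{d+1, \ldots, \ell+1\}$, forcing a good residue into $\{0, \ldots, d\}$. Picking any $S$ realizing that residue then yields $\deg_{H_\ell^j(x)}(S) = 0$.

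The hard part will be the extreme ranges where $|V_1|$ (or symmetrically $|V_2|$) is very small---bounded by $k - d$, say---so not every residue is available to $s$. My plan is to take $s = 0$ when $j \ge 1$ and $s = 1$ when $j = 0$; the latter is legal because the congruence $xn \equiv \lfloor n/k \rfloor \, j + \ell + 1 \pmod{\ell+2}$ forces $|V_1| \ge \ell + 1 \ge 1$ when $j = 0$. A direct check, using $k - d \le \ell$, shows the corresponding $t^*$ is either undefined (degree $0$) or at least $1$; in the latter case $\binom{|V_1| - s}{t^*} = O(1)$ and $\binom{|V_2| - d + s}{k - d - t^*} = O(n^{k - d - 1})$, yielding $\deg_{H_\ell^j(x)}(S) = o\!\left(\binom{n-d}{k-d}\right)$. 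The symmetric range $|V_2| \le k - d$ is handled by choosing $s \in \{d - 1, d\}$ (at least one of which avoids the bad residue, since $d$ and $d-1$ are distinct mod $\ell + 2$). The delicate point is juggling the constraint on $xn$ against the finitely many small values of $|V_1|$ or $|V_2|$, but since only boundedly many such sub-cases arise in terms of $k$ and $\ell$, the uniform bound $\delta_d(H_\ell^j(x))/\binom{n-d}{k-d} \to 0$ will follow.
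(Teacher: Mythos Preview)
Your argument is correct and is essentially the same as the paper's: both find a $d$-set $S$ with $\deg(S)=0$ by choosing $s=|S\cap V_1|$ so that none of $s,s+1,\ldots,s+(k-d)$ is congruent to $j$ modulo $\ell+2$, and both use the hypotheses $d\ge k-\ell$ and $d>k/2$ to guarantee such an $s$ lies in $\{0,\ldots,d\}$. The paper phrases this as locating a maximal ``blank'' interval $I_j\subseteq[0,k]$ of length at least $k+1-d$, whereas you count good residues directly; your extra treatment of the degenerate cases $|V_i|\le k-d$ is more careful than the paper (which tacitly assumes both parts are large enough to realize the required profile) but is not a different idea.
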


This explains the result in \cite{Han14_mat} -- when $d=k-1$, we have $d\ge \max\{k-\ell, \lfloor k/2 \rfloor +1\}$ unless $\ell=0$ (so Conjecture~\ref{conj:npm} is true for $d=k-1$).
We obtain following remarks by Propositions~\ref{prop:gkdll} and~\ref{prop:gkd}.
\begin{enumerate}
\item
Note that $1 - (1 - 1/k)^{k-d} \ge g(k,d,1)$ if $d\le (1-\ln (3/2))k\approx 0.59k$. Indeed, since $(1-1/k)^k < 1/e$, we have
\[
\left(1- \frac{1}{k} \right)^{k-d} < \left(\frac 1e \right)^{\frac{k-d}k} \le \left(\frac 1e \right)^{\ln (3/2)} = \frac23.
\]
So $1 - (1 - 1/k)^{k-d}\ge 1/3 > g(k,d,1)$ for $d\ge 2$ and $1 - (1 - 1/k)^{k-1}> 1/2 > g(k,1,1)$ by Proposition~\ref{prop:gkdll}.

\item
When $2\le \ell \le k-1$, $d=k-2$ and $k\ge 4$, by Proposition~\ref{prop:gkd}, we have $g(k,k-2,\ell)=0$. 
So Theorem~\ref{thm:npm3} confirms Conjecture~\ref{conj:npm} for this range.

\end{enumerate}

These remarks, together with Theorems~\ref{thm:KOT} -- ~\ref{thm:npm3}, imply Conjecture~\ref{conj:npm} for various cases. We summarize them in the following corollary.
Note that by the remarks, $g(k,d,\ell)$ is always smaller than $c^*_d(k)$ in the following corollary, except the case $\ell=1$.

\begin{corollary}\label{cor:npm}
Fix positive integers $k,d,n$ such that $k\ge 4$, $\min\{k/2, k-4\}\le d\le k-2$ and $n \equiv \ell$ mod $k$ with $\ell \in [k-1]$.
Then \eqref{eq:conj} holds if at least one of the following holds.
\begin{enumerate}[$(1)$]
\item $\ell=1$;

\item
$d\le (1-\ln (3/2))k\approx 0.59k$. 

\item
$d=k-2$.
\end{enumerate}
\end{corollary}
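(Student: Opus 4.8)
The plan is to read off Corollary~\ref{cor:npm} from the results already in place rather than to argue afresh: it is essentially a repackaging of Theorems~\ref{thm:KOT}--\ref{thm:npm3} together with Propositions~\ref{prop:gkdll} and~\ref{prop:gkd}. The first move is to use the hypothesis $\min\{k/2,k-4\}\le d\le k-1$ to invoke Theorem~\ref{thm:KOT}, so that $c^*_d(k)$ may be replaced by the explicit value $1-(1-1/k)^{k-d}$ everywhere below. With this, the lower bound in \eqref{eq:conj} is free: the lower bound of Theorem~\ref{thm:npm2} (equivalently, Corollary~\ref{cor:HPS} together with Construction~\ref{con:db1}) gives $m_d'(k,n)\ge(\max\{g(k,d,\ell),c^*_d(k)\}+o(1))\binom{n-d}{k-d}=(\max\{g(k,d,\ell),1-(1-1/k)^{k-d}\}+o(1))\binom{n-d}{k-d}$, which is exactly the right-hand side of \eqref{eq:conj}. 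So it remains, in each of the three cases, to match this from above.

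In case $(1)$ I would note that for $\ell=1$ the two ends of Theorem~\ref{thm:npm2} coincide, both equal to $(\max\{g(k,d,1),c^*_d(k)\}+o(1))\binom{n-d}{k-d}$, which by Theorem~\ref{thm:KOT} is the right-hand side of \eqref{eq:conj}. In case $(3)$ I would first dispose of $\ell=1$ via case $(1)$; for $\ell\in\{2,\dots,k-1\}$ Theorem~\ref{thm:npm3} gives $m_{k-2}'(k,n)=(1-(1-1/k)^2+o(1))\binom{n-k+2}{2}$ exactly, and it is enough to show $g(k,k-2,\ell)\le1-(1-1/k)^2$. For $k\ge5$ I would get this from Proposition~\ref{prop:gkd}: here $\ell\ge2$ gives $k-2\ge k-\ell$ and $k\ge5$ gives $\lfloor k/2\rfloor\le k-3$, so $k-2\ge\max\{k-\ell,\lfloor k/2\rfloor+1\}$ and $g(k,k-2,\ell)=0$; for $k=4$ I would instead use Proposition~\ref{prop:gkdll}(i), which gives $g(4,2,\ell)\le1/(d+1)=1/3<7/16=1-(1-1/4)^2$.

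In case $(2)$ the space barrier dominates the divisibility barrier, and I would make this precise by showing $g(k,d,1)\le1-(1-1/k)^{k-d}$ and $g(k,d,\ell)\le1-(1-1/k)^{k-d}$, so that both ends of Theorem~\ref{thm:npm2} reduce to $(c^*_d(k)+o(1))\binom{n-d}{k-d}=(1-(1-1/k)^{k-d}+o(1))\binom{n-d}{k-d}$, matching the lower bound. These inequalities are exactly Remark~$(1)$: from $(1-1/k)^k<1/e$ and $k-d\ge k\ln(3/2)$ one gets $(1-1/k)^{k-d}<e^{-\ln(3/2)}=2/3$, hence $1-(1-1/k)^{k-d}>1/3$, while for $d\ge2$ Proposition~\ref{prop:gkdll}(ii) gives $g(k,d,1)<1/3$ and Proposition~\ref{prop:gkdll}(i) gives $g(k,d,\ell)\le1/3$; for $d=1$, which forces $k\in\{4,5\}$, one has $1-(1-1/k)^{k-1}>1/2\ge g(k,1,\ell)$, again from Proposition~\ref{prop:gkdll}(i).

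I do not expect any genuine obstacle: the substance lives in the upper-bound arguments of Theorems~\ref{thm:npm2} and~\ref{thm:npm3} and in the estimates for $g$ in Propositions~\ref{prop:gkdll} and~\ref{prop:gkd}, all of which may be quoted, so what is left is bookkeeping. The only spots requiring care are the boundary cases $k=4$ in case $(3)$ and $d=1$ in case $(2)$, where the crude bounds on $g(k,d,\ell)$ and on $(1-1/k)^{k-d}$ are close to tight, and one must check that the needed inequalities --- e.g. $(1-1/k)^{k-1}<1/2$, which holds for $k\ge4$ but fails for $k=2$ --- really do hold.
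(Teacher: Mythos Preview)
Your argument is correct and follows the same route as the paper, which simply cites the two remarks together with Theorems~\ref{thm:KOT}--\ref{thm:npm3}; in fact you are more careful than the paper's own sketch, since you separately handle $k=4$ in case~(3) via Proposition~\ref{prop:gkdll}(i) (where the paper's invocation of Proposition~\ref{prop:gkd} does not literally apply, as $k-2<\lfloor k/2\rfloor+1$) and you explicitly bound $g(k,d,\ell)$ for general $\ell$ in case~(2). One trivial slip: in your first paragraph the range should read $d\le k-2$, not $d\le k-1$.
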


\subsection{Lattice-based absorbing method}

We use the \emph{lattice-based absorbing method} in the proof of Theorem~\ref{thm:npm2}.
The absorbing technique initiated  by R\"odl, Ruci\'nski and Szemer\'edi \cite{RRS06} has been shown to be efficient on finding spanning structures in graphs and hypergraphs. Roughly speaking, the goal is to build the \emph{absorbing set}, which is a small subset of vertices and can be used to `absorb' another small set of arbitrary vertices.
For finding perfect matchings, the so-called \emph{Strong Absorbing Lemma} \cite[Lemma 2.4]{HPS} says that when the minimum degree condition guarantees that every two vertices are \emph{reachable}, we can find the absorbing set in the hypergraph.

So the question is: \emph{What can we do when the minimum degree condition does not guarantee that every two vertices are reachable?}
Keevash and Mycroft \cite{KM1} studied this case. They showed that\footnote{In fact, their result is on the degree sequence on $k$-complexes, which is more general. Here we state their work in the codegree case on $k$-graphs.} for any $k$-graph $H$  with a bounded minimum codegree which is not close to the space barrier, there exists a partition $\cP$ of $V(H)$ such that a nice structure appears in the lattice generated by the edge-vectors on $\cP$.
Then they showed that $H$ contains a perfect matching if the lattice satisfies certain condition.
Their proof used the hypergraph regularity method, rather than the absorbing method.
Inspired by their work, we noticed the relation of reachability and the lattice structure and developed the lattice-based absorbing method. Roughly speaking, the reachability information provides us a partition $\cP$, for which we can find an absorbing set that works (although not in the usual sense) under the lattice structure.
This method was first used in \cite{Han14_poly} for solving a problem of Karpi\'nski, Ruci\'nski and Szyma\'nska~\cite{KRS10}, for which an asymptotic result was obtained by Keevash, Knox and Mycroft~\cite{KKM13}.
Another advantage of the method is that the lattice language allows us to use some basic knowledge in group theory, such as, subgroups and cosets (see Proposition~\ref{prop:lattice}).

\medskip
The rest of this paper is organized as follows. We show Propositions~\ref{prop:gkdll} and \ref{prop:gkd} in Section 2. In Section 3, we prove Theorem~\ref{thm:npm2} by the lattice-based absorbing method. We show Theorem~\ref{thm:npm3} in Section 4. 

\medskip
\noindent\textbf{Notations.} Throughout this paper, $x\ll y$ means that for any $y\ge 0$ there exists $x_0\ge 0$ such that for any $x\le x_0$ the following statement holds. Hierarchy of more constants are defined similarly.
For simplicity, for a set $S$ and an element $u$, we write $S\cup u$ instead of $S\cup \{u\}$.
We use boldface letters to represent vectors, for example, $\bfu, \bfv, \bfi$.

\section{Proof of Propositions~\ref{prop:gkdll} and \ref{prop:gkd}}

\begin{proof}[Proof of Proposition~\ref{prop:gkdll}]
Fix $0\le t\le d$ and consider any $d$-set $S_t$ containing exactly $t$ vertices in $V_1$. By the definition of $H_\ell^j(x)$, we have
\begin{equation}\label{eq:delta_d}
\delta_d(H_\ell^j(x)) = \min\{ \deg_{H_\ell^j(x)}(S_0), \dots, \deg_{H_\ell^j(x)}(S_d) \}.
\end{equation}
Note that the neighbors of $S_t$ are all $(k-d)$-sets with $j'$ vertices in $V_1$ such that $j'\equiv j-t$ mod $\ell+2$ and $0\le j'\le k-d$. Thus,
\begin{equation}\label{eq:St}
\deg_{H_\ell^j(x)}(S_t) = \sum_{j'\equiv j-t \bmod \ell+2, 0\le j'\le k-d} \binom{|V_1|-t} {j'} \binom{|V_2|-(d-t)}{k-d-j'}.
\end{equation}
Let $p=\min\{\ell+1, d\}$ and note that $\deg_{H_\ell^j(x)}(S_0) + \dots + \deg_{H_\ell^j(x)}(S_p) \le (1+o(1))\binom{n-d}{k-d}$.
So $\delta_d(H_{\ell}^j(x)) \le (\frac1{p+1} + o(1))\binom{n-d}{k-d}$. This implies that when $d\le \ell+1$, $g(k,d,\ell)\le 1/(d+1)$ and when $d\ge \ell+1$, $g(k,d,\ell)\le 1/{(\ell+2)}$, proving part (i).

Now consider $\ell=1$ and use $|V_1|=xn$ and $|V_2|=(1-x)n$. Then \eqref{eq:St} implies that
\begin{align*}
\deg_{H_1^j(x)}(S_t) & = \sum_{j'} \frac{x^{j'}(1-x)^{k-d-j'}n^{k-d}+o(n^{k-d})}{{j'}! (k-d-j')!} \\
&= \left( \sum_{j'} \binom{k-d}{j'}x^{j'}(1-x)^{k-d-j'} +o(1) \right)\binom{n-d}{k-d},
\end{align*}
where the sums are on all $j'\equiv j-t \bmod 3$ and $0\le j'\le k-d$. 
Define three functions $h_i(k,d, x) = \sum_{j'\equiv i \bmod 3, 0\le j'\le k-d} \binom{k-d}{j'}x^{j'}(1-x)^{k-d-j'}$ for $i=0,1,2$.
Clearly these functions do not depend on $j$ and
\begin{equation}\label{eq:gkd}
g(k,d,1) = \max_{x\in [0,1]}\min_{i=0,1,2}\{h_i(k,d,x)\}.
\end{equation}
This implies that, for any $x\in [0,1]$ and $\r>0$, there exists $n_0$ such that for any $n\ge n_0$, we have 
\begin{equation}\label{eq:n0}
\min_{i=0,1,2} \left\{ \sum_{j\equiv i \bmod 3,  0\le j\le k-d} \binom{x n} {j} \binom{(1-x)n}{k-d-j} \right\} \le (g(k,d,1)+\r/2) \binom{n-d}{k-d}.
\end{equation}

To see (ii), let $\omega_1$ be one of the nontrivial cubic roots of 1.
We consider the following polynomial $(x\omega_1 + 1- x)^{k-d}$. 
It is easy to see that we can write the polynomial in the following form
\begin{equation}\label{eq:gene}
(x\omega_1 + 1- x)^{k-d} = h_0(k,d,x) + h_1(k,d,x) \omega_1 + h_2(k,d,x) \omega_1^2.
\end{equation}
First, since $\sum_{i=0,1,2}h_i(k,d,x) = 1$, we have $g(k,d,1)\le 1/3$. 
Moreover, if $g(k,d,1) = 1/3$, then let $x_0$ be the value of $x$ that achieves this, i.e., $h_i(k,d, x_0) =1/3$ for $i=0,1,2$. Putting $x_0$ in \eqref{eq:gene} and by $1+ \omega_1 + \omega_1^2 = 0$, we get $(x_0\omega_1 + 1- x_0)^{k-d}=0$.
This implies that $x_0 = 1/(1-\omega_1)\notin \mathbb{R}$, a contradiction.

To see the lower bound, set $x=1/2$ in \eqref{eq:gene} and we get
\[
\frac{(1 + \omega_1)^{k-d}}{2^{k-d}} = h_0(k,d,1/2) + h_1(k,d,1/2) \omega_1 + h_2(k,d,1/2) \omega_1^2.
\]
For $i=0,1,2$, let $C_i = 2^{k-d} h_i(k,d,1/2)$. Note that $C_0$, $C_1$ and $C_2$ are integers such that $C_0 + C_1 + C_2 = 2^{k-d}$.
On the other hand, note that $(1 + \omega_1)^{k-d} = (-1)^{k-d}\omega_1^{2(k-d)}$, which could be $\pm 1$, $\pm \omega_1$, or $\pm \omega_1^2$, depending on the value of $k-d$.
If $(1 + \omega_1)^{k-d} =1$, then by $1+ \omega_1 + \omega_1^2 = 0$, we know that $C_0 - 1 = C_1 = C_2$. This implies that $C_0 = \lceil \frac{2^{k-d}}3 \rceil$ and $C_1 = C_2 = \lfloor \frac{2^{k-d}}3 \rfloor$.
Other cases are similar and it is easy to see that in all cases, $\min\{C_0, C_1, C_2\} = \lfloor \frac{2^{k-d}}3 \rfloor$.
Thus, by definition, we have $g(k,d,1)\ge \min_{i=0,1,2}\{h_i(k,d,1/2)\} = \lfloor \frac{2^{k-d}}3 \rfloor 2^{d-k}$.
This concludes part (ii).
\end{proof}

Here we remark that for fixed $k$ and $d$, the value of $g(k,d,1)$ can be determined simply.  For example, let $k=6$ and $d=3$ and consider $H_1^0(x)$, i.e., all 6-sets $e$ such that $|e\cap V_1|=0$, 3 or 6. By \eqref{eq:gkd}, we have
\[
g(6,3,1) = \max_{x\in [0,1]}\min\{x^3+(1-x)^3, 3x^2(1-x), 3x(1-x)^2\}. 
\]
The answer is about $0.283$ which is achieved by $x\approx 0.605$.

\begin{proof}[Proof of Proposition~\ref{prop:gkd}]
Given any $k$-set $S$ in $V(H_{\ell}^j(x))$, clearly $|S\cap V_1|\in [0,k]$.
Recall that $S\in E(H_{\ell}^j(x))$ if and only if $|S\cap V_1|\equiv j$ mod $\ell+2$. Consider the longest interval $I_j=[a,b]\subseteq [0,k]$ such that for any $i\in I_j$, $i\not\equiv j$ mod $\ell+2$. 
If there are two values of $i\in [0,k]$ such that $i\equiv j$ mod $\ell+2$,
then $|I_j| = \ell+1$. 
Otherwise, there is only one value of $i\in [0,k]$ such that $i\equiv j$ mod $\ell+2$, which cuts the whole interval $[0, k]$ into two pieces, and thus $|I_j| = \max\{j, k-j\} \ge \lceil k/2\rceil$.
Moreover, $|I_{\lfloor k/2 \rfloor}| = \min\{\ell+1, \lceil k/2\rceil\}$.
Altogether $\min_{0\le j\le \ell+1} |I_j| = \min\{\ell+1, \lceil k/2\rceil\}$.
Note that the $(a+k-b)$-sets with $a$ vertices in $V_1$ and $k-b$ vertices in $V_2$ have degree zero in $H_{\ell}^j(x)$, because all $k$-sets $S$ containing such a set satisfy that $|S\cap V_1| \in I_j$. This means $\delta_{d}(H_{\ell}^j(x)) = 0$ for $d\ge a+k-b= k+1-|I_j|$.
By definition, this implies that $g(k, d, \ell) = 0$ when
\[
d\ge \max_{0\le j\le \ell+1} k+1-|I_j| = k+1 - \min_{0\le j\le \ell+1}|I_j| = k+1 - \min\{\ell+1, \lceil k/2\rceil\} = \max\{k-\ell, \lfloor k/2 \rfloor +1\}. \qedhere
\]
\end{proof}

\section{Proof of Theorem~\ref{thm:npm2}}

For a $k$-graph $H$, we shall first identify a partition $\cP$ of $V(H)$ and then study the so-called robust edge-lattice with respect to this partition.

\subsection{A partition of the vertex set}
We start with some definitions.
We say that two vertices $u$ and $v$ are \emph{$(\beta, i)$-reachable in $H$} if there are at least $\beta n^{i k-1}$ $(i k-1)$-sets $S\subseteq V(H)$ such that both $H[S\cup u]$ and $H[S\cup v]$ have perfect matchings. We say a vertex set $U$ is \emph{$(\beta, i)$-closed in $H$} if any two vertices $u,v\in U$ are $(\beta, i)$-reachable in $H$.
For any $v\in V(H)$, let $\tilde{N}_{\beta, i}(v)$ be the set of vertices in $V(H)$ that are $(\beta, i)$-reachable to $v$.

We first show a lower bound on $|\tilde{N}_{\beta, 1}(v)|$ for all but a small fraction of vertices $v\in V(H)$. Note that we cannot guarantee that the conclusion of Lemma~\ref{lem:He} holds for all vertices in $V(H)$.
Similar proof tricks are used in \cite{HZZ_tiling}.

\begin{lemma}\label{lem:He}
Given $\delta, \e>0$ such that $\delta \ge 3\e$, there exists $\a>0$ such that the following holds for sufficiently large $n$. Let $H=(V, E)$ be an $n$-vertex $k$-graph with $\delta_1(H) \ge \delta\binom{n-1}{k-1}$, 
there exists a set $V_0'\subseteq V$ of size at most $k\e n$ such that for any $v\in V\setminus V_0'$, $|\tilde{N}_{\a, 1}(v)| \ge \frac34 \e^2 n$.
\end{lemma}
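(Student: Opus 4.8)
The plan is to exploit a counting/averaging argument on pairs of vertices that are joined by an edge to a common $(k-2)$-set, so that many such pairs are $(\alpha,1)$-reachable. First I would make precise what an edge of $H$ gives us: for a $(k-1)$-set $S$ and two vertices $u,v$ with $S\cup u, S\cup v\in E(H)$, the set $S$ itself witnesses $(\beta,1)$-reachability of $u$ and $v$ (since $H[S\cup u]$ and $H[S\cup v]$ are single edges, hence have perfect matchings). So it suffices to show that for all but $k\varepsilon n$ vertices $v$, there are at least $\tfrac34\varepsilon^2 n$ vertices $u$ such that the number of $(k-1)$-sets $S$ with $S\cup u, S\cup v\in E(H)$ is at least $\alpha n^{k-1}$.

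Next I would set this up as a weighted bipartite-type count. For each $v$, consider its link $L_v=\{T\in\binom{V}{k-1}: T\cup v\in E(H)\}$, which has size $\deg_H(v)\ge\delta\binom{n-1}{k-1}$. For a pair $u,v$ define $w(u,v)=|\{S\in\binom{V\setminus\{u,v\}}{k-1}: S\cup u, S\cup v\in E(H)\}|$; this is essentially the number of common $(k-1)$-sets in the links $L_u$ and $L_v$ (after deleting $u,v$, which costs a negligible lower-order term). Summing $w(u,v)$ over all $u$ for a fixed $v$, one double-counts: $\sum_u w(u,v)=\sum_{S}|\{u: S\cup u\in E(H),\ S\cup v\in E(H)\}|\ge\sum_{S\in L_v}(\deg_H(S)-1)$, where the sum is over the $(k-1)$-sets $S$ in the link of $v$. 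Since $\delta_1(H)\ge\delta\binom{n-1}{k-1}$ controls $\deg_H(v)$ but not directly $\deg_H(S)$ for $(k-1)$-sets, I would instead bound $\sum_{S\in L_v}\deg_H(S)$ from below differently: count triples, or pass to the codegree count $\sum_{S\in\binom{V}{k-1}}\deg_H(S)\cdot[\,\cdot\,]$. The cleaner route is: $\sum_{u\ne v}w(u,v)\ge\sum_{S\in L_v}(\deg_H(S)-1)$, and one needs $\sum_{S\in L_v}\deg_H(S)$ to be at least of order $\varepsilon n\cdot\binom{n-1}{k-1}$ for "most" $v$. This is where the hypothesis $\delta\ge 3\varepsilon$ and the exclusion of a set $V_0'$ of size $k\varepsilon n$ must enter: the total sum $\sum_v\sum_{S\in L_v}\deg_H(S)=\sum_{e\in E}\sum_{S\subset e,|S|=k-1}\deg_H(S)\cdot$(something), or more simply $\sum_{v}\sum_{S\in L_v}\deg_H(S)=\sum_{S}\deg_H(S)^2\ge |E|^2\binom{n}{k-1}^{-1}$ by Cauchy–Schwarz; since $|E|\ge\frac{\delta}{k}\binom{n}{k}\approx\frac{\delta n}{k}\binom{n-1}{k-1}$-scale, this gives an average over $v$ of order $\delta^2 n\binom{n-1}{k-1}$, and a Markov/averaging step then shows all but at most $k\varepsilon n$ vertices $v$ have $\sum_{S\in L_v}\deg_H(S)\gtrsim\varepsilon\, n\binom{n-1}{k-1}$ (using $\delta\ge 3\varepsilon$ to absorb constants and the quadratic loss).

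Having fixed such a "good" vertex $v$, I then know $\sum_{u}w(u,v)\ge c\,\varepsilon n\binom{n-1}{k-1}$ for an absolute constant $c$. Now a second averaging/Markov step over $u$: each $w(u,v)\le\binom{n}{k-1}$, so if fewer than $\tfrac34\varepsilon^2 n$ vertices $u$ had $w(u,v)\ge\alpha n^{k-1}$, the contribution to the sum would be at most $\tfrac34\varepsilon^2 n\cdot\binom{n}{k-1}+n\cdot\alpha n^{k-1}$, which for $\alpha$ chosen small enough (depending on $\delta,\varepsilon,k$) is smaller than $c\varepsilon n\binom{n-1}{k-1}$ — contradiction. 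Hence $|\tilde N_{\alpha,1}(v)|\ge\tfrac34\varepsilon^2 n$. I would choose the hierarchy $\alpha\ll\varepsilon\le\delta/3$ and take $V_0'$ to be exactly the set of "bad" vertices produced in the paragraph above, which has size at most $k\varepsilon n$.

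The main obstacle I anticipate is the middle step: getting from $\delta_1(H)\ge\delta\binom{n-1}{k-1}$ (a lower bound on $(k-1)$-set codegrees summed appropriately, i.e.\ on vertex degrees) to a lower bound on $\sum_{S\in L_v}\deg_H(S)$ for most $v$. The naive Cauchy–Schwarz on $\sum_S\deg_H(S)^2$ only uses $|E|$, hence $\delta$, not the minimum \emph{$1$-degree} structure, and it is conceivable the loss there is too lossy; the role of $\delta\ge 3\varepsilon$ strongly suggests the intended argument keeps track of degrees more carefully — e.g.\ for each $v$, $\deg_H(v)\ge\delta\binom{n-1}{k-1}$ means $|L_v|\ge\delta\binom{n-1}{k-1}$, and then one argues that few $(k-1)$-sets $S$ can have tiny $\deg_H(S)$ \emph{without} forcing some vertex to violate $\delta_1$; isolating exactly which vertices to throw into $V_0'$ so that the remaining ones inherit a robust bound is the delicate bookkeeping. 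I would expect to handle it by a direct double-count of pairs $(v,S)$ with $S\in L_v$ and $\deg_H(S)<\varepsilon n$ / $\ge\varepsilon n$, charging the "bad" instances to at most $k\varepsilon n$ vertices, mirroring the trick the author attributes to \cite{HZZ_tiling}.
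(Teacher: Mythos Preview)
Your overall strategy---fix $v$, lower-bound $\sum_u w(u,v)$, then Markov to extract $\tfrac34\varepsilon^2 n$ vertices $u$ with $w(u,v)\ge\alpha n^{k-1}$---is exactly the paper's, and you have correctly located the one nontrivial step. Your Cauchy--Schwarz attempt cannot be salvaged, though: $\sum_v\sum_{S\in L_v}\deg_H(S)=\sum_S\deg_H(S)^2$ only gives an \emph{average} over $v$, and Markov points the wrong way to conclude anything about individual $v$. Your final paragraph is the right fix and is precisely what the paper does, with one quantitative correction: the codegree threshold must be $\varepsilon^2 n$, not $\varepsilon n$. Call an edge \emph{weak} if some $(k-1)$-subset $S$ of it has $\deg_H(S)\le\varepsilon^2 n$; then there are at most $\binom{n}{k-1}\varepsilon^2 n$ weak edges, so at most $k\varepsilon n$ vertices lie in $\ge\varepsilon\binom{n}{k-1}$ of them---these form $V_0'$. (With threshold $\varepsilon n$ this double-count would force the per-vertex cutoff up to $\binom{n}{k-1}$, which is useless.) For $v\notin V_0'$, deleting weak edges leaves $\deg_{H'}(v)\ge(\delta-\varepsilon-o(1))\binom{n-1}{k-1}\ge\tfrac{\delta}{2}\binom{n-1}{k-1}$ (here is where $\delta\ge 3\varepsilon$ enters), and now every $S\in N_{H'}(v)$ has $\deg_H(S)>\varepsilon^2 n$, giving $\sum_u w(u,v)\ge\deg_{H'}(v)\cdot\varepsilon^2 n$ directly.

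One more adjustment in your second Markov: bounding $w(u,v)\le\binom{n}{k-1}$ for the ``large'' $u$ is too crude to reach the constant $\tfrac34$. Use instead $|N_{H'}(u)\cap N_{H'}(v)|\le\deg_{H'}(v)$; splitting at the level $\tfrac{\delta\varepsilon^2}{8}\binom{n-1}{k-1}$ and solving $\deg_{H'}(v)\cdot\varepsilon^2 n\le n\cdot\tfrac{\delta\varepsilon^2}{8}\binom{n-1}{k-1}+|D|\cdot\deg_{H'}(v)$ yields $|D|\ge\varepsilon^2 n-\tfrac14\varepsilon^2 n=\tfrac34\varepsilon^2 n$ on the nose, with $\alpha=\delta\varepsilon^2/(8k!)$.
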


\begin{proof}
If an edge $e\in E$ contains a $(k-1)$-set $S\in \binom e{k-1}$ with $\deg_H(S)\leq \e^2 n$, then it is called \emph{weak}, otherwise called \emph{strong}. 
Note that by the definition, the number of weak edges in $H$ is at most $\binom n{k-1}\e^2n$. 
Let $H'$ be the subhypergraph of $H$ induced on strong edges.
Let
\begin{align*}
V_0'=\left\{v\in V: v \text{ is contained in at least } \e \binom{n}{k-1} \text{ weak edges}\right\}.
\end{align*}
So $|V_0'|\le k\e n$ -- otherwise there are more than $k\e n\cdot \e \binom{n}{k-1}/k=\binom n{k-1} \e^2n$ weak edges in $H$, a contradiction.

Let $\a =  \frac{\delta \e^2}{8 k!}$.
For any $x\in V\setminus V_0'$, since $x$ is contained in at most $\e \binom{n}{k-1}$ weak edges, we have
\begin{equation}\label{eq:degx}
\deg_{H'}(x)\ge \deg_H(x) - \e\binom n{k-1} \ge \delta_1(H) - \e\binom n{k-1} \ge \frac{\delta}2 \binom{n-1}{k-1}.
\end{equation}
To see $|\tilde N_{\a, 1}(x)|\ge \frac34 \e^2 n$, let
\[
D=\left\{v\in V: |N_{H'}(v)\cap N_{H'}(x)|\geq \frac{\delta \e^2}{8} \binom {n-1}{k-1}\right\}.
\]
By definition, two vertices $x, v \in V$ are $(\a, 1)$-reachable in $H'$ (so in $H$) if $|N_{H'}(v)\cap N_{H'}(x)| \ge \delta \e^2 \binom {n-1}{k-1}/8 > \a n^{k-1}$. Therefore $D\subseteq \tilde N_{\a, 1}(x)$. Let $t$ be the number of pairs $(S, u)$ such that $S\in N_{H'}(x)$ and $u\in N_{H}(S)$. Since all edges of $H'$ are strong, we have $t\geq \deg_{H'}(x) \cdot \e^2 n$. By counting, we have
\[
\deg_{H'}(x) \, \e^2 n\leq t \leq n\cdot \frac{\delta \e^2}{8}\binom{n-1}{k-1}+|D|\cdot \deg_{H'}(x),
\]
which implies $|D|\ge \e^2 n  -  \delta \e^2 n \binom{n-1}{k-1}/ (8 \deg_{H'}(x))$. 
By \eqref{eq:degx}, $|\tilde N_{\a, 1}(x)|\ge |D|\geq \frac34 \e^2n$ and we are done.
\end{proof}

We will use the following simple result from \cite{LM1} here and in the next subsection.

\begin{proposition}\cite[Proposition 2.1]{LM1}\label{prop21}
For $\e, \beta>0$ and integer $i\ge 1$, there exists $\beta_0>0$ and an integer $n_0$ satisfying the following. Suppose $H$ is a $k$-graph of order $n\ge n_0$ and there exists a vertex $x\in V(H)$ with $|\tilde{N}_{\beta, i}(x)|\ge \e^2 n/2$. Then for all $0<\beta'\le \beta_0$, $\tilde{N}_{\beta,i}(x)\subseteq \tilde{N}_{\beta',i+1}(x)$.
\end{proposition}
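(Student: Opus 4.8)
The plan is to fix an arbitrary vertex $u\in\tilde N_{\beta,i}(x)$ and to show that $u$ and $x$ are $(\beta',i+1)$-reachable in $H$ for a suitable constant $\beta_0>0$ and every $0<\beta'\le\beta_0$; since $u$ is arbitrary this gives the asserted inclusion. The mechanism is an ``add one more edge'' extension. If $T$ is an $(ik-1)$-set witnessing the $(\beta,i)$-reachability of $u$ and $x$, and $W$ is an edge of $H$ disjoint from $T\cup\{u,x\}$, then $S:=T\cup W$ has size $(i+1)k-1$, and appending $W$ to a perfect matching of $H[T\cup u]$ (resp.\ of $H[T\cup x]$) produces a perfect matching of $H[S\cup u]$ (resp.\ of $H[S\cup x]$). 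Counting the pairs $(T,W)$ that arise and dividing by the constant $\binom{(i+1)k-1}{k}$ bounding the number of ways a fixed $((i+1)k-1)$-set decomposes as $T\cup W$ will then yield the required number of ``good'' sets $S$.

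For this to give a bound of order $n^{(i+1)k-1}$ I need a lower bound of order $n^{k}$ on the number of edges of $H$ avoiding the two fixed vertices $u,x$, and establishing this is the crux of the argument: it is not automatic, since the hypothesis controls only $|\tilde N_{\beta,i}(x)|$ and says nothing directly about $e(H)$. The plan to get it: for each $w\in\tilde N_{\beta,i}(x)\setminus\{u,x\}$ — and there are at least $\e^2 n/2-2$ of these by hypothesis — take the $\ge\beta n^{ik-1}$ sets $T$ of size $ik-1$ witnessing the reachability of $w$ and $x$. Every such $T$ has $w,x\notin T$ (otherwise $H[T\cup w]$, resp.\ $H[T\cup x]$, would be a hypergraph on $ik-1$ vertices with a perfect matching, which is impossible for $k\ge2$), and all but an $O(1/n)$ fraction of them also have $u\notin T$. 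For each of these ``nice'' $T$, fix a perfect matching of $H[T\cup w]$ and let $g_T$ be its edge through $w$; then $g_T\subseteq T\cup\{w\}$ is an edge of $H$ containing $w$ and disjoint from $\{u,x\}$. The map $T\mapsto g_T$ has fibres of size $O(n^{(i-1)k})$, because $g_T$ forces the $(k-1)$-set $g_T\setminus\{w\}$ to lie in $T$ while the remaining $(i-1)k$ vertices of $T$ are free. Hence there are $\Omega(n^{k-1})$ edges through $w$ disjoint from $\{u,x\}$; summing over the $\Omega(n)$ choices of $w$ and noting each edge is counted at most $k$ times gives the desired $\Omega(n^k)$ edges disjoint from $\{u,x\}$.

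Granting this, the remainder is routine bookkeeping. For each of the $\ge\beta n^{ik-1}$ witnessing $(ik-1)$-sets $T$ for the pair $(u,x)$, at most $O(n^{k-1})$ of the $\Omega(n^k)$ edges disjoint from $\{u,x\}$ meet $T$, so $T$ extends to $\Omega(n^k)$ good sets $S=T\cup W$; thus the number of pairs $(T,W)$ is $\Omega(n^{(i+1)k-1})$, and dividing by $\binom{(i+1)k-1}{k}$ to account for overcounting of each $S$ leaves at least $\beta_0\,n^{(i+1)k-1}$ distinct good sets, where $\beta_0>0$ depends only on $k,i,\e,\beta$. Choosing $n_0$ large enough to absorb all lower-order error terms, we conclude that $u$ and $x$ are $(\beta',i+1)$-reachable for every $0<\beta'\le\beta_0$; the degenerate possibility $u=x$ is handled by the same argument with a single forbidden vertex in place of two. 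I expect the only genuinely non-routine step to be the edge-counting claim of the second paragraph — once it is in place, the rest follows by the standard reachability-extension counting, as in \cite{LM1}.
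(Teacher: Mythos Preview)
The paper does not supply its own proof of this proposition --- it is quoted verbatim from \cite{LM1} and used as a black box --- so there is nothing in the present paper to compare against directly. Your argument is correct: the crucial (and only non-routine) step is to show that the hypothesis $|\tilde N_{\beta,i}(x)|\ge \e^2 n/2$ alone forces $H$ to have $\Omega(n^k)$ edges avoiding $\{u,x\}$, and your edge-through-$w$ extraction from the perfect matchings of $H[T\cup w]$, together with the fibre bound $O(n^{(i-1)k})$ on the map $T\mapsto g_T$, does exactly this. The subsequent pair-count of $(T,W)$ and division by $\binom{(i+1)k-1}{k}$ is standard and sound.

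This is essentially how the result is proved in \cite{LM1}. One cosmetic alternative is to fold the two stages into a single count over triples $(T,w,T')$ with $T$ witnessing $(u,x)$, $w\in\tilde N_{\beta,i}(x)\setminus(T\cup\{u,x\})$, and $T'$ witnessing $(w,x)$ disjoint from $T\cup\{u,x\}$, taking $W$ to be the edge through $w$ in a perfect matching of $H[T'\cup w]$; the overcount per $S$ is then $O(n^{(i-1)k})$ and one arrives at the same $\Omega(n^{(i+1)k-1})$ bound. But this is merely a reorganisation of your argument, not a different proof.
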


The main tool to identify the partition of $V(H)$ is the following lemma, which is a variant of \cite[Lemma 3.8]{Han14_poly} (see also \cite[Lemma 3.8]{HZZ_tiling}). 
Since there may be some vertices $v$ such that $|\tilde{N}_{\beta, 1}(v)|$ is small, we cannot get a perfect partition as in \cite[Lemma 3.8]{Han14_poly} -- $\cP$ will contain a \emph{trash} set $V_0$.

\begin{lemma}\label{lem:P}
Given $0<\e \ll \delta$, there exists $\beta>0$ satisfying the following. Let $H=(V, E)$ be an $n$-vertex $k$-graph such that $\delta_1(H)\ge (\delta+k^2\e)\binom{n-1}{k-1}$. Then there is a partition $\cP$ of $V(H)$ into $V_0, V_1,\dots, V_r$ with $r\le \lfloor 1/\delta \rfloor$ such that $|V_0|\le \sqrt\e n$ and for any $i\in [r]$, $|V_i|\ge \e^2 n$ and $V_i$ is $(\beta, 2^{\lfloor 1/\delta \rfloor-1})$-closed in $H$.
\end{lemma}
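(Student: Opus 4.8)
The plan is to build the partition $\cP$ greedily using the reachability neighborhoods, starting from the sets $\tilde N_{\beta,1}(v)$ guaranteed by Lemma~\ref{lem:He} and then merging overlapping neighborhoods. First I would apply Lemma~\ref{lem:He} with $\delta$ and $\e$ (adjusting constants so that $\delta+k^2\e$ in the hypothesis plays the role of ``$\delta$'' there, leaving room for the $\e$-loss) to obtain a set $V_0'$ with $|V_0'|\le k\e n$ such that every $v\in V\setminus V_0'$ has $|\tilde N_{\alpha,1}(v)|\ge \frac34\e^2 n$ for a suitable $\alpha>0$. Next I would iterate Proposition~\ref{prop21} a bounded number of times — namely $\lfloor 1/\delta\rfloor -1$ times, each step replacing the current reachability parameter $\beta$ by a smaller $\beta_0$ — to upgrade $1$-reachability to $i$-reachability with $i=2^{\lfloor 1/\delta\rfloor -1}$; this uses the lower bound $|\tilde N_{\beta,i}(v)|\ge \e^2 n/2$ which is maintained since the neighborhoods only grow. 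Call the final parameter $\beta$. So for every $v\notin V_0'$ we have $|\tilde N_{\beta,i}(v)|\ge \frac34\e^2 n$ where $i=2^{\lfloor 1/\delta\rfloor -1}$.

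Then I would construct the parts. Consider the ``reachability graph'' on $V\setminus V_0'$ where we think of each vertex $v$ as carrying the large set $\tilde N_{\beta,i}(v)$. The key observation is that if $|\tilde N_{\beta,i}(u)\cap \tilde N_{\beta,i}(v)|$ is a positive fraction of $n$, then $u$ and $v$ are themselves $(\beta',2i)$-reachable for a slightly smaller $\beta'$: one composes the $(ik-1)$-sets witnessing reachability of $u$ (resp.\ $v$) to a common vertex $w$, concatenating to get $(2ik-1)$-sets realizing both $H[S\cup u]$ and $H[S\cup v]$ with perfect matchings, and there are $\gtrsim n^{2ik-1}$ such concatenations (up to a bounded overcount). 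This is exactly the ``transitivity up to doubling the index'' phenomenon; since we only need it to hold after at most $\lfloor 1/\delta\rfloor$ doublings, and $2^{\lfloor 1/\delta\rfloor}\cdot$ something is still the bound $2^{\lfloor 1/\delta\rfloor -1}$ appearing in the statement (one has to be careful to set $i$ so that the compositions stay within the allowed index), the constants work out. I would now pick vertices greedily: choose $v_1\notin V_0'$, set $V_1 := \{u : |\tilde N_{\beta,i}(u)\cap \tilde N_{\beta,i}(v_1)|\ge \tfrac{\e^2}{2}n\}$, remove $V_1$, and repeat. Each $V_\ell$ has size $\ge \frac34\e^2 n - |V_0'| \ge \e^2 n/2$ (roughly), so the process stops after $r\le 2/\e^2$ steps — but a sharper count gives $r\le \lfloor 1/\delta\rfloor$: since the sets $\tilde N_{\beta,i}(v_\ell)$ for distinct chosen $v_\ell$ are pairwise ``almost disjoint'' (otherwise they would have been merged), and each has size $\ge \frac34\e^2 n$... actually the correct bound comes from the degree condition, as in \cite[Lemma~3.8]{Han14_poly}: each part $V_\ell$ must contain $\ge \delta n$ vertices because of $\delta_1(H)$, forcing $r\le 1/\delta$. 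I would cite the analogous argument there. Finally set $V_0 := V_0' \cup (\text{leftover vertices not landing in any } V_\ell)$; a vertex is left over only if it failed the reachability-overlap test against every $v_\ell$, and one bounds the number of such vertices by $\sqrt\e n$ using that $V\setminus(V_0'\cup V_1\cup\cdots)$ can contain only few vertices (each contributing a small neighborhood, contradicting Lemma~\ref{lem:He} if too many). The closedness of each $V_i$ under $(\beta,2^{\lfloor 1/\delta\rfloor -1})$-reachability follows from the overlap-implies-reachable step applied within $V_i$.

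The main obstacle I anticipate is bookkeeping the index and the reachability parameter through the merging step: each time two neighborhoods overlap and we conclude reachability, the index roughly doubles and $\beta$ shrinks, so I must fix at the outset $i=2^{\lfloor 1/\delta\rfloor -1}$ and a descending chain $\beta \ll \cdots \ll \beta_0 \ll \alpha$ of $\lfloor 1/\delta\rfloor$ constants, verifying that after all the doublings the index never exceeds $2^{\lfloor 1/\delta\rfloor -1}$ and the parameter is still positive — this is where the hypothesis $\e\ll\delta$ and the precise form $r\le\lfloor 1/\delta\rfloor$ are used. The other delicate point is the bound $|V_0|\le\sqrt\e n$: $V_0'$ only has size $k\e n$, so the extra leftover vertices must be controlled separately, and I would argue that after extracting $V_1,\dots,V_r$ a vertex outside all of them has at most $\e^2 n$ reachable partners in each $V_\ell$ and none elsewhere, contradicting $|\tilde N_{\beta,i}(v)|\ge\frac34\e^2 n$ unless there are few such vertices — with the $\sqrt\e$ slack absorbing the crude estimates. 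The rest (perfect-matching concatenation counts, the factorial constants) is routine.
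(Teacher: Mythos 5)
Your greedy/merging approach has a genuine gap in exactly the spot you flag: you fix $i=2^{\lfloor 1/\delta\rfloor-1}$ upfront and then conclude reachability between vertices whose $\tilde N_{\beta,i}$-neighborhoods overlap, which yields index $2i=2^{\lfloor 1/\delta\rfloor}$ --- already over budget --- and closedness of an entire greedy cluster would need still more doublings. There is no repair within the greedy framework because you cannot know in advance how small to take $i$. The paper instead sets $c=\lfloor 1/\delta\rfloor$ and lets $d$ be the \emph{largest} integer for which there exist $d$ vertices pairwise not $(\beta_{c+1-d},2^{c+1-d})$-reachable, so that the reachability index \emph{decreases} as $d$ grows. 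Maximality of $d$ then gives three things at once, with no merging step: (i) every vertex of $V(H')$ lies in some $\tilde N_{\beta_{c-d},2^{c-d}}(v_j)$ (otherwise $v,v_1,\dots,v_d$ would witness a larger $d$); (ii) each resulting part $U_j$ is directly $2^{c-d}$-closed (two non-reachable vertices of $U_j$ together with $\{v_{j'}\}_{j'\neq j}$ would again contradict maximality); and (iii) $2^{c-d}\le 2^{c-1}$, so the index budget is respected.

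Your bound $r\le\lfloor 1/\delta\rfloor$ is also argued incorrectly: the parts are only required to have size $\ge\e^2 n$, and the degree condition does not give $\ge\delta n$. The actual reason (which also forces $d\le c$) is a pigeonhole on links: since $\delta_1(H')\ge\delta\binom{n'-1}{k-1}$ and $(c+1)\delta-1\ge\binom{c+1}{2}\beta_0$, any $c+1$ vertices contain a pair with common $(k-1)$-neighborhood of size $\ge\beta_0 n^{k-1}$, i.e.\ a $1$-reachable pair; this is where $\e\ll\delta$ (hence $(c+1)\delta>1$) does real work, and it is absent from your sketch. Finally, the bound $|V_0|\le\sqrt\e n$ comes in the paper from $|V_0'|+|U_0|+d\e^2 n$, where $|U_0|\le\binom{d}{2}\e n$ is deduced from the pairwise overlaps $|\tilde N_{c-d}(v_i)\cap\tilde N_{c-d}(v_j)|<\e n$ (a quantitative consequence of $v_i,v_j$ not being $2^{c+1-d}$-reachable); you gesture at such an estimate but do not supply it, and the $\sqrt\e$ slack is what makes the totals close.
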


\begin{proof}

We first apply Lemma~\ref{lem:He} with $\delta,\e$ and get $\a>0$.
Let $c=\lfloor 1/\delta \rfloor$ (then $(c+1)\delta -1>0$).
We choose constants satisfying the following hierarchy
\[
1/n\ll \beta = \beta_{c-1}\ll \beta_{c-2}\ll \cdots \ll \beta_1 \ll \beta_0 \ll \e, \a, (c+1)\delta-1.
\]

Throughout this proof, given $v\in V(H)$ and $0\le i\le c-1$, we write $\tilde{N}_{\beta_i, 2^i}(v)$ as $\tilde N_{i}(v)$ for short. 
We also say $2^i$-reachable (or $2^i$-closed) for $(\beta_i, 2^i)$-reachable (or $(\beta_i, 2^i)$-closed).
We first apply Lemma \ref{lem:He} on $H$ and get a vertex set $V_0'$ of size at most $k\e n$ such that for any $v\in V\setminus V_0'$, we have $|\tilde{N}_{0}(v)|\ge |\tilde{N}_{\a, 1}(v)| \ge \frac34 \e^2 n$.
By Proposition \ref{prop21} and the choice of $\beta_i$'s, we may assume that $\tilde {N}_{i}(v)\subseteq \tilde {N}_{i+1}(v)$ for all $0\le i<c-1$ and all $v\in V(H')$.
Hence, if $W\subseteq V(H)$ is $2^i$-closed in $H$ for some $i\le c-1$, then $W$ is $ 2^{c-1}$-closed.
Let $n'=|V\setminus V_0|$ and $H' = H[V\setminus V_0']$.
Note that
\[
\delta_1(H') \ge \delta_1(H) - k\e n \binom{n-2}{k-2} \ge \delta \binom{n' -1}{k-1}.
\]

Recall that two vertices $u$ and $v$ are 1-reachable in $H$ if $|N_H(u)\cap N_H(v)|\ge \beta_0 n^{k-1}$. We first note that any set of $c+1$ vertices in $V(H')$ contains two vertices that are 1-reachable to each other because 
$\delta_1(H')\ge \delta\binom{n'-1}{k-1}$ and $(c+1)\delta-1\ge \binom{c+1}2\beta_0$. 
Also we can assume that there are two vertices that are not $2^{c-1}$-reachable to each other, as otherwise $V(H')$ is $2^{c-1}$-closed and we get a partition $\cP_0=\{V_0', V(H')\}$.

Let $d$ be the largest integer such that
there exist $v_1,\dots, v_{d}\in V(H')$ such that no pair of them are $2^{c+1-d}$-reachable to each other. 
Note that $d$ exists by our assumption and $2\le d\le c=\lfloor 1/\delta \rfloor$ by our observation. 
Fix such $v_1,\dots, v_{d}\in V(H')$, by Proposition \ref{prop21}, we may assume that any two of them are not $2^{c-d}$-reachable to each other. Consider $\tilde N_{c-d}(v_i)$ for all $i\in [d]$. Then we have the following facts.
\begin{enumerate}[(i)]
\item Any $v\in V(H)\setminus\{v_1,\dots, v_{d}\}$ must be in $\tilde N_{c-d}(v_i)$ for some $i\in [d]$, as otherwise $v, v_1,\dots, v_{d}$ contradicts the definition of $d$. 

\item $|\tilde N_{c-d}(v_i)\cap \tilde N_{c-d}(v_j)|<\e n$ because $v_i, v_j$ are not $2^{c+1-d}$-reachable to each other.
Indeed, otherwise we get at least
\[
\e n (\beta_{c-d}n^{2^{c-d}k-1} - n^{2^{c-d}k-2}) (\beta_{c-d}n^{2^{c-d}k-1} - 2^{c-d}k n^{2^{c-d}k-2})  \ge \beta_{c+1-d} n^{2^{c+1-d}k-1}
\]
reachable $(2^{c+1-d}k-1)$-sets for $v_i, v_j$, which means that they are $2^{c+1-d}$-reachable, a contradiction. 
\end{enumerate}

For $i\in [d]$, let $U_i=(\tilde N_{c-d}(v_i)\cup \{v_i\})\setminus \bigcup_{j\in [d]\setminus \{i\}} \tilde N_{c-d}(v_j)$. Note that for $i\in [d]$, $U_i$ is $2^{c-d}$-closed. Indeed, if there exist $u_1, u_2\in U_i$ that are not $2^{c-d}$-reachable to each other, then $\{u_1, u_2\}\cup (\{v_1,\dots, v_{d}\}\setminus\{v_{i}\})$ contradicts the definition of $d$.

Let $U_0=V(H)\setminus (V_0'\cup U_1\cup\cdots \cup U_{d})$. By (i) and (ii), we have $|U_0|\le \binom{d}{2}\e n$. We add vertices of $U_0$ and the vertices of $U_i$ with $|U_i|\le \e^2 n$ to $V_0'$ and denote the resulting set by $V_0$.
Let the resulting partition of $V(H)$ be $V_0, V_1,\dots, V_{r}$ for some $r\le d\le \lfloor 1/\delta \rfloor$. By definition we have $|V_i|\ge \e^2 n$ for $i\in [r]$ and $|V_0|\le |V_0'| + |U_0| + d \e^2 n \le \sqrt\e n$, as $\e \ll \delta$. Moreover, each $V_i$ is $2^{c-d}$-closed, and thus $2^{c-1}$-closed. 
\end{proof}

\subsection{The robust edge-lattice}

We need some definitions from \cite{KM1}. 
Fix an integer $r>0$, let $H$ be a $k$-graph and let $\cP=\{V_0, V_1,\dots, V_r\}$ be a partition of $V(H)$.
By Lemma~\ref{lem:P}, $V_0$ does not have the reachability information. So when we work on the edge-lattice, we consider the $r$-dimensional vectors on the parts of $\cP$ except $V_0$. 
Formally, the \emph{index vector} $\mathbf{i}_{\cP}(S)\in \mathbb{Z}^r$ of a subset $S\subset V(H)$ with respect to $\cP$ is the vector whose coordinates are the sizes of the intersections of $S$ with each part of $\cP$ except $V_0$, i.e., $\mathbf{i}_{\cP}(S)_{V_i}=|S\cap V_i|$ for $i\in [r]$.
We call a vector $\mathbf{i}\in \mathbb{Z}^r$ an \emph{$s$-vector} if all its coordinates are nonnegative and their sum equals $s$ and denote the set of all $s$-vectors by $I_s^r$.
Given $\mu>0$, a $k$-vector $\mathbf{v}$ is called a $\mu$\emph{-robust edge-vector} if at least $\mu |V(H)|^k$ edges $e\in E(H)$ satisfy $\mathbf{i}_\cP(e)=\mathbf{v}$.
Let $I_{\cP}^{\mu}(H)\subseteq I_k^r$ be the set of all $\mu$-robust edge-vectors and let $L_{\cP}^{\mu}(H)$ be the lattice (additive subgroup) generated by the vectors of $I_{\cP}^{\mu}(H)$.
For $j\in [r]$, let $\mathbf{u}_j\in \mathbb{Z}^r$ be the $j$-th \emph{unit vector}, namely, $\mathbf{u}_j$ has 1 on the $j$-th coordinate and 0 on other coordinates.
A \emph{transferral} is the vector $\bfu_i - \bfu_j$ for some $i\neq j$.

Suppose $I$ is a set of $k$-vectors in $\mathbb{Z}^r$ and $J$ is a set of vector in $\mathbb{Z}^r$ such that any $\bfi\in J$ can be written as a linear combination of vectors in $I$, namely,
\begin{equation}\label{eq:Cmax}
\bfi=\sum_{\bfv\in I}a_{\bfv}\bfv. 
\end{equation}
We denote by $C(r, k, I, J)$ as the maximum of $|a_{\bfv}|, \bfv\in I$ over all $\bfi\in J$ and $C(k',k,J) := \max_{r\le k', I\subseteq I_k^r} C(r,k,I,J)$ for some integer $k'$.

Given a $k$-graph $H$ with $\delta_1(H)\ge (\delta + k^2\e) \binom{n-1}{k-1}$ and let $\cP=\{V_0, V_1, \dots, V_r\}$ be the partition of $V(H)$ output by Lemma~\ref{lem:P}. We pick a constant $0<\mu\ll \e$ and consider $I_{\cP}^{\mu}(H)$ and $L_{\cP}^{\mu}(H)$.
Our next result shows that $V_i\cup V_j$ is closed in $H$ if $\bfu_i - \bfu_j\in L_{\cP}^{\mu}(H)$, which means that, we can merge $V_i$ and $V_j$ and keep the closedness.

\begin{lemma}\label{clm:trans}
Let $0< \{\mu, \beta\}\ll \e\ll 1/i_0, 1/k', 1/k$, then there exist $0<\beta'\ll \{\mu, \beta\}$ and an integer $t \ge i_0$ such that the following holds.
Let $n$ be a sufficiently large integer.
Suppose $H$ is an $n$-vertex $k$-graph and $\cP=\{V_0, V_1, \dots, V_r\}$ is a partition with $r\le k'$ such that $|V_0|\le \sqrt\e n$ and for any $i\in [r]$, $|V_i|\ge \e^2 n$ and $V_i$ is $(\beta, i_0)$-closed in $H$.
If $\bfu_i - \bfu_j\in L_{\cP}^{\mu}(H)$, then $V_i\cup V_j$ is $(\beta', t)$-closed in $H$.
\end{lemma}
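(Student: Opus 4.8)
The plan is to show that the hypothesis $\bfu_i - \bfu_j \in L_{\cP}^\mu(H)$ lets us convert a perfect matching on a set $S \cup v$ (with $v \in V_i$) into one on a set $S' \cup w$ (with $w \in V_j$), where $S$ and $S'$ differ only in a bounded number of vertices, and then to chain this with the closedness inside $V_i$ and inside $V_j$. First I would unpack the membership $\bfu_i-\bfu_j=\sum_{\bfv\in I_\cP^\mu(H)} a_\bfv \bfv$; by the definition of $C(k',k,I_1^r)$ (applied with $J$ the set of transferrals, or rather with the fixed vector $\bfu_i-\bfu_j$), we may take $|a_\bfv|\le C := C(k',k,\{\bfu_i-\bfu_j : i\neq j\})$, a constant depending only on $k'$ and $k$. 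Split the sum into positive and negative parts, so that $\bfu_i-\bfu_j = \sum_{p=1}^{m} \bfv_p^+ - \sum_{q=1}^{m'} \bfv_q^-$ where each $\bfv_p^+, \bfv_q^-$ is a $\mu$-robust edge-vector, $m,m'\le kC$, and the two multisets of vectors sum coordinatewise to within $\bfu_i-\bfu_j$ of each other.

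Second, the key combinatorial step: given $v\in V_i$, I want to find a set $T$ of at most $(m+m')k$ vertices, disjoint from a prescribed "forbidden" set of $o(n)$ vertices, such that $H[\{v\}\cup T]$ has a perfect matching whose edges realize the vectors $\bfv_1^+,\dots,\bfv_m^+$ (so $\{v\}\cup T$ uses $v$ plus vertices distributed according to $\sum \bfv_p^+ - \bfu_i$), and simultaneously $H[T']$ for a suitable $T'\subseteq T$ plus one vertex $w\in V_j$ has a perfect matching realizing $\bfv_1^-,\dots,\bfv_{m'}^-$. The point of $\mu$-robustness is that each edge-vector $\bfv_p$ is realized by $\ge \mu n^k$ edges, so we can greedily pick disjoint edges of each prescribed type, one at a time, avoiding the $o(n)$ forbidden vertices and the boundedly many previously chosen ones — at each step the number of available edges of the right type is still $\ge \mu n^k - O(n^{k-1}) > 0$. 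This produces, for each $v\in V_i$, a family of $\Omega(n^{(m+m')k - 1})$ "switching sets" $R$ (of bounded size $\ell_0:=(m+m')k-1$, say) with the property that $H[R\cup v]$ has a perfect matching and $H[R\cup w]$ has a perfect matching for *every* $w\in V_j$ whose $j$-coordinate bookkeeping matches — wait, more carefully: the set $R$ should be chosen so that $R\cup v$ and $R\cup w$ each have perfect matchings, using that the difference of the two vector-collections is exactly $\bfu_i-\bfu_j$.

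Third, I would assemble the reachability count. To show $v\in V_i$ and $w\in V_j$ are $(\beta', t)$-reachable for a suitable $t\ge i_0$: pick $i_0$-reachability sets inside $V_i$ to "move" from $v$ to some auxiliary vertex, use a switching set to cross from $V_i$ to $V_j$, and pick $i_0$-reachability sets inside $V_j$; concatenating these bounded-size sets gives a single $(tk-1)$-set for a constant $t$ (roughly $t = 2i_0 + \ell_0$ adjusted for parity/uniformity), and the number of such concatenations is $\ge \beta n^{i_0 k -1}\cdot \mu^{O(1)} n^{\ell_0} \cdot \beta n^{i_0 k -1} \ge \beta' n^{tk-1}$ for $\beta'\ll\{\mu,\beta\}$. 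One must also handle $V_0$: since $|V_0|\le\sqrt\e n = o(n)$, it goes into the forbidden set at every greedy step, so no chosen vertex lands in $V_0$. The main obstacle I expect is the careful bookkeeping in the second step — making the two perfect matchings (on $R\cup v$ and on $R\cup w$) share exactly the same vertex set $R$ while their edge-vectors differ by precisely $\bfu_i-\bfu_j$ — which requires choosing the positive-part edges and negative-part edges on overlapping vertex pools in a consistent way; once that switching gadget is in place, the reachability counting is routine. (This is where the hypothesis $\e \gg \mu,\beta$ and $\e$ small relative to $1/k',1/k,1/i_0$ is used, to guarantee room for the greedy selection and to absorb error terms.)
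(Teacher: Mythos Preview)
Your overall framework matches the paper's --- write $\bfu_i-\bfu_j=\sum_{\bfv}(p_\bfv-q_\bfv)\bfv$ with bounded coefficients and use $\mu$-robustness to select edges greedily --- but step~2 has a genuine gap. You want a single set $R$ of size $mk-1$ (with $m=m'$) such that both $H[R\cup v]$ and $H[R\cup w]$ have perfect matchings realizing the positive and negative vector lists respectively. Greedily choosing disjoint edges with index vectors $\bfv_1^+,\dots,\bfv_m^+$ (with $v$ in one of them) does yield a matching on some $R\cup\{v\}$, and it is true that $\bfi_\cP(R\cup\{w\})=\sum_q\bfv_q^-$; but matching the index vector is only a \emph{necessary} condition for a perfect matching of the prescribed type on that particular vertex set, not a sufficient one. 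There is no reason the specific vertices of $R\cup\{w\}$ form edges of $H$ with the required index vectors. You flag this yourself as ``the main obstacle,'' yet nothing in the proposal resolves it; your count $\Omega(n^{(m+m')k-1})$ is also inconsistent with $|R|=mk-1$.

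The paper sidesteps this by \emph{not} forcing both matchings onto the same vertex set. It picks two \emph{disjoint} matchings $M^p$ and $M^q$ (realizing the positive and negative parts respectively), fixes $x_1'\in V(M^p)\cap V_i$ and $x_2'\in V(M^q)\cap V_j$, and uses the identity $\bfi_\cP(V(M^p)\setminus\{x_1'\})=\bfi_\cP(V(M^q)\setminus\{x_2'\})$ to pair every remaining vertex of $V(M^p)$ with one of $V(M^q)$ in the same part of~$\cP$. Because each part is $(\beta,i_0)$-closed, each pair has many reachability $(i_0k-1)$-sets; two further such sets handle $(x_1,x_1')$ and $(x_2,x_2')$. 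The reachable set for $x_1,x_2$ is the union of $V(M^p)$, $V(M^q)$, and all $kC'+1$ reachability sets (where $C'=\sum p_\bfv=\sum q_\bfv$): on $S\cup x_1$ one uses $M^p$ together with the $V(M^q)$-ends of the reachability sets, and on $S\cup x_2$ one uses $M^q$ with the $V(M^p)$-ends. This gives $t=i_0kC'+C'+i_0$, not $2i_0+O(C)$ as in your sketch --- the factor $kC'$ reflects that one needs a within-part reachability step for \emph{every} vertex of one of the matchings, not just one step at each end.
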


\begin{proof}
We prove the claim for $i=1$ and $j=2$. 
Note that it suffices to show that any $x_1\in V_1$ and $x_2\in V_2$ are $(\beta'', t)$-reachable for some $\beta''>0$ and integer $t\ge i_0$.
Indeed, since $V_1$ and $V_2$ are $(\beta, i_0)$-closed in $H$, by Proposition~\ref{prop21}, there exists $\beta'''$ such that $V_1$ and $V_2$ are $(\beta''', t)$-closed in $H$ and by the assumption above, we get that $V_1\cup V_2$ are $(\beta', t)$-closed in $H$, where $\beta' = \min\{\beta'', \beta'''\}$.

Let $J=\{\bfu_1 - \bfu_2\}$.
By our assumption, there are nonnegative integers $p_{\mathbf{v}}, q_{\mathbf{v}}\le C(k',k,J)$, for each $\mathbf{v}\in I_{\cP}^{\mu}(H)$, such that
\begin{align}
\bfu_1 - \bfu_2=\sum_{\mathbf{v}\in I_{\cP}^{\mu}(H)} {(p_{\mathbf{v}} - q_{\mathbf{v}})\mathbf{v}}
\quad i.e., \quad
\sum_{\mathbf{v}\in I_{\cP}^{\mu}(H)}{q_{\mathbf{v}}}\mathbf{v} + \mathbf{u}_1=\sum_{\mathbf{v}\in I_{\cP}^{\mu}(H)}{p_{\mathbf{v}}}\mathbf{v} + \bfu_2. \label{regroup11}
\end{align}
By comparing the sums of all the coordinates from two sides of either equation in \eqref{regroup11}, we obtain that
\[
\sum_{\mathbf{v}\in I_{\cP}^{\mu}(H)}{p_{\mathbf{v}}}=\sum_{\mathbf{v}\in I_{\cP}^{\mu}(H)}{q_{\mathbf{v}}}.
\]
Denote this constant by $C'$ and note that $C'\le C(k',k,J) |I_{\cP}^{\mu}(H)| \le C(k',k,J) \binom{k+r-1}{r-1}$, which is independent of $n$.
Since $n$ is large enough, we have $C' \ll n$.

Fix $x_1\in V_1$ and $x_2\in V_2$. 
For each $\mathbf{v}\in I_{\cP}^{\mu}(H)$, we select $p_{\mathbf{v}} + q_{\mathbf{v}}$ disjoint edges with index vector $\mathbf{v}$ that do not contain $x_1$ or $x_2$, and form two disjoint matchings $M^p$ and $M^q$, where $M^p$ consists of $p_{\mathbf{v}}$ edges with index vector $\bfv$ for all $\bfv\in I_{\cP}^{\mu}(H)$, and $M^q$ consists of $q_{\mathbf{v}}$ edges with index vector $\mathbf{v}$ for all $\bfv\in I_{\cP}^{\mu}(H)$.
Note that $|V(M^p)|=|V(M^q)|=kC'$.
When we select any edge, we need to avoid at most $2k C'$ vertices, which are incident to at most $2k C' n^{k-1}\leq {\mu} n^k/2$ edges, as $n$ is large enough. Therefore, the number of choices for the two matchings is at least $({\mu} n^k/2)^{2C'}$.

By \eqref{regroup11}, we have $\mathbf{i}_{\cP}(V(M^q))+\bfu_1=\mathbf{i}_{\cP}(V(M^p)) + \bfu_2$.
Fix two vertices $x_1'\in V(M^p)\cap V_1$ and $x_2'\in V(M^q)\cap V_2$.  We match each of the vertices from $V(M^p)\setminus \{x_1'\}$ with a different vertex from $V(M^q)\setminus \{x_2'\}$ such that two matched vertices are from the same part of $\cP$ and thus are $(\beta, i_0)$-reachable to each other. We next select a reachable $(i_0 k-1)$-set for each pair of the matched vertices such that all these $kC'-1$ $(i_0 k - 1)$-sets are vertex disjoint and also disjoint from $V(M^p\cup M^q)\cup\{x_1, x_2\}$. There are at least $\frac{\beta}2 n^{i_0 k-1}$ choices for each of these $(i_0 k-1)$-sets. Finally,  since $x_1$ and $x_1'$ and respectively, $x_2$ and $x_2'$ are $(\beta, i_0)$-reachable, we pick two vertex-disjoint reachable $(i_0 k -1)$-sets for them such that these two $(i_0 k -1)$-sets are also disjoint from all existing $(i_0 k -1)$-sets and $V(M^p\cup M^q)\cup\{x_1, x_2\}$. The union of these $kC'+1$ $(i_0 k -1)$-sets and $V(M^p\cup M^q)$ forms a reachable $(i_0 k^2 C'+kC'+i_0 k-1)$-set for $x_1$ and $x_2$. There are at least
\[
\left(\frac{\mu}2n^k\right)^{2C'}\left( \frac{\beta}2 n^{i_0k-1}\right)^{kC'+1}= \left(\frac{\mu}2\right)^{2C'}\left(\frac{\beta}2\right)^{kC'+1} n^{i_0 k^2 C'+kC'+i_0k-1}
\]
such reachable sets. Thus, we take $\beta'' = (\frac{\mu}2)^{2C'} (\frac{\beta}2)^{kC'+1}$ and $t = i_0 k C' +C'+i_0$ and the proof is complete.
\end{proof}

\subsection{Proof of Theorem~\ref{thm:npm2}}

Fix an integer $i>0$. For a $k$-vertex set $S$, we say a set $T$ is an \emph{absorbing $i$-set for $S$} if $|T|=i$ and both $H[T]$ and $H[T\cup S]$ contain perfect matchings. 
We use the absorbing lemma from \cite[Lemma 3.4]{Han14_poly} with some quantitative changes, but it easily follows from the original proof.

\begin{lemma}\cite{Han14_poly}\label{lem:abs}
Suppose $r\le k$ and
\[
1/n \ll \a \ll \{\beta, \mu\} \ll \{1/k, 1/t\}.
\]
Suppose that $\cP_0=\{V_0, V_1, \dots, V_r\}$ is a partition of $V(H)$ such that for $i\in [r]$, $V_i$ is $(\beta, t)$-closed.
Then there is a family $\F_{abs}$ of disjoint $tk^2$-sets with size at most $\beta n$ such that $H[V(\F_{abs})]$ contains a perfect matching and every $k$-vertex set $S$ with $\bfi_{\cP_0}(S)\in I_{\cP_0}^{\mu} (H)$ has at least $\a n$ absorbing $t k^2$-sets in $\F_{abs}$. 
\end{lemma}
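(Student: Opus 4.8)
The plan is to build the absorbing family $\F_{abs}$ by the standard probabilistic argument, the only novelty being that "absorbing" is measured relative to the reachability structure of the partition $\cP_0$ rather than assuming full closedness of $V(H)$. First I would fix a $k$-vertex set $S$ with $\bfi_{\cP_0}(S)\in I_{\cP_0}^{\mu}(H)$ and show it has many absorbing $tk^2$-sets. Write $\bfi_{\cP_0}(S)=\mathbf{v}$; since $\mathbf{v}$ is a $\mu$-robust edge-vector, there are at least $\mu n^k$ edges $e$ with $\bfi_{\cP_0}(e)=\mathbf{v}$. Pick such an edge $e=\{y_1,\dots,y_k\}$ disjoint from $S$, and pair each $y_m$ with the vertex $s_m\in S$ lying in the same part $V_{i_m}$ (possible because $S$ and $e$ have the same index vector, so the intersections with each $V_i$ match up; vertices of $S$ in $V_0$ can be handled by noting $\bfi$ ignores $V_0$, so actually one should be slightly careful — $S$ could meet $V_0$, but then the corresponding coordinates still match and we pair within $V_0$-free parts; I would phrase it so that the index-vector equality gives a bijection between $S$ and $e$ respecting the parts $V_1,\dots,V_r$, absorbing any $V_0$-vertices of $S$ into a block of $e$'s $V_0$-vertices or, more cleanly, reduce to the case $S\cap V_0=\emptyset$ as in \cite{Han14_poly}). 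For each pair $(y_m,s_m)$ in a part $V_{i_m}$, $(\beta,t)$-closedness yields at least $\beta n^{tk-1}$ $(tk-1)$-sets $R_m$ with $H[R_m\cup y_m]$ and $H[R_m\cup s_m]$ both having perfect matchings. Choosing the $R_m$ greedily to be pairwise disjoint and disjoint from $e\cup S$ costs only $O(n^{tk-2})$ at each step, so there remain $\ge \tfrac12\beta n^{tk-1}$ choices for each, giving $\ge (\tfrac12\beta)^k n^{tk^2-k}\cdot \mu n^k = \Omega(n^{tk^2})$ ordered tuples. Then $T:=e\cup R_1\cup\dots\cup R_k$ has $|T|=k+k(tk-1)=tk^2$, and $H[T]$ has a perfect matching (take $e$ together with the perfect matchings of $H[R_m\cup y_m]$), while $H[T\cup S]$ has one too (match each $s_m$ into $H[R_m\cup s_m]$ and discard $e$, which is now unused — wait, $e$'s vertices must be covered; instead match $y_m$ with... — the correct bookkeeping: $H[T\cup S]$ has a perfect matching given by the perfect matchings of $H[R_m\cup s_m]$ for each $m$ together with the edge $e$ on $\{y_1,\dots,y_k\}$). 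Dividing by the overcounting factor $(tk^2)!$ gives at least $\theta n^{tk^2}$ distinct absorbing $tk^2$-sets for each such $S$, for some constant $\theta=\theta(\beta,\mu,k,t)>0$.

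Next I would run the standard deletion argument. Let $p = n^{-tk^2}\cdot n\log n$ roughly, or more simply select a random subfamily $\F$ of the $\binom{n}{tk^2}$ many $tk^2$-sets by including each independently with probability $\beta n^{1-tk^2}/(\text{const})$; equivalently, sample $\F$ of size $\sim \beta n/2$ uniformly at random among $tk^2$-sets. By linearity of expectation $\mathbb{E}|\F|=\Theta(\beta n)$ and, since the number of $S$ with $\bfi_{\cP_0}(S)\in I_{\cP_0}^\mu(H)$ is at most $n^k$ while each has $\ge\theta n^{tk^2}$ absorbing sets, the expected number of absorbing sets in $\F$ for a fixed $S$ is $\Theta(\theta\beta n)\gg 2\a n$. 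A concentration bound (Markov for the size and a Chernoff/Janson bound for the per-$S$ counts, union-bounded over the $\le n^k$ choices of $S$) shows that with positive probability $|\F|\le \beta n$ and every relevant $S$ has $\ge 3\a n$ absorbing $tk^2$-sets in $\F$. Finally I would delete overlaps: the expected number of intersecting pairs of $tk^2$-sets in $\F$ is $O(\beta^2 n^2 \cdot n^{-1}) = o(\a n)$, so removing one set from each intersecting pair leaves a subfamily $\F_{abs}$ of pairwise disjoint $tk^2$-sets with $|\F_{abs}|\le\beta n$, still retaining $\ge 2\a n$ (hence $\ge \a n$) absorbing sets for every $S$ with $\bfi_{\cP_0}(S)\in I_{\cP_0}^\mu(H)$; and since each member $T\in\F_{abs}$ satisfies $H[T]\supseteq$ a perfect matching, the disjoint union gives that $H[V(\F_{abs})]$ has a perfect matching.

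The main obstacle, and the only place real care is needed, is the first step: producing $\Omega(n^{tk^2})$ absorbing $tk^2$-sets for $S$ using only the part-wise $(\beta,t)$-closedness rather than global closedness. This is exactly where the index-vector condition $\bfi_{\cP_0}(S)\in I_{\cP_0}^\mu(H)$ is used — it guarantees both that a robust edge $e$ with the same profile as $S$ exists in abundance and that $S$ and $e$ can be matched up part-by-part so that each pair lies in a single closed part $V_i$. The $V_0$ vertices require a remark (they carry no reachability, so one must ensure $S$ and the chosen $e$ agree on $V_0$ too, which again follows from robustness of $\mathbf{v}$ after noting edges counted by $\mathbf{v}$ have a fixed number of $V_0$-vertices, or one simply cites the original argument of \cite[Lemma 3.4]{Han14_poly} which handles this). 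Everything after that — the random selection, concentration, and cleanup — is the routine absorbing-method boilerplate and goes through verbatim under the stated hierarchy $1/n\ll\a\ll\{\beta,\mu\}\ll\{1/k,1/t\}$, so I would state it briefly and refer to \cite{Han14_poly} for the identical details.
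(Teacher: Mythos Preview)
The paper does not give a proof of this lemma at all; it is simply quoted from \cite[Lemma~3.4]{Han14_poly} (with cosmetic quantitative changes), so there is nothing to compare against beyond noting that your sketch is indeed the standard argument from that paper. Your outline is essentially correct and is the same approach.

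Two small remarks. First, your extended worry about vertices of $S$ in $V_0$ is unnecessary: by definition $I_{\cP_0}^{\mu}(H)\subseteq I_k^r$, so if $\bfi_{\cP_0}(S)\in I_{\cP_0}^{\mu}(H)$ then the coordinates of $\bfi_{\cP_0}(S)$ (which record only $|S\cap V_1|,\dots,|S\cap V_r|$) already sum to $k=|S|$, forcing $S\cap V_0=\emptyset$; the same holds for the edges $e$ witnessing robustness of $\mathbf v$. So the part-by-part pairing of $S$ with $e$ goes through with no caveats. Second, in your description of the perfect matching on $H[T]$ you wrote ``take $e$ together with the perfect matchings of $H[R_m\cup y_m]$'', which double-covers the $y_m$; the correct cover of $T$ is just the $k$ matchings of $H[R_m\cup y_m]$ alone (these already cover $e=\{y_1,\dots,y_k\}$), and the edge $e$ is used only in the matching of $H[T\cup S]$, exactly as you state in your corrected bookkeeping one line later.
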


Another key step in the proof of Theorem~\ref{thm:npm2} is the following proposition. We postpone its proof to the next subsection.

\begin{proposition}\label{prop:lattice}
Given $\min\{3, k/2\} \le d\le k-2$ or $(k,d) = (5,2)$, $0<\mu \ll \e \ll \r$ and let $n$ be sufficiently large.
Let $H$ be an $n$-vertex $k$-graph with $\delta_d(H) \ge (g(k,d,1)+\r)\binom{n-d}{k-d}$ and let $\cP = \{V_0, \dots, V_r\}$ be a partition of $V(H)$ with $r\le 3$ such that $|V_0|\le \sqrt\e n$ and for any $i\in [r]$, $|V_i|\ge \e^2 n$, and $L_{\cP}^{\mu}(H)$ contains no transferral. Then for any $U\subseteq V(H)\setminus V_0$ with $|U|=k+1$, there exists $i\in [r]$ such that $\bfi_{\cP}(U) - \bfu_i\in L_{\cP}^{\mu}(H)$.
\end{proposition}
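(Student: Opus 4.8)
The plan is to leverage the fact that $L_{\cP}^{\mu}(H)$ contains no transferral to pin down, for each pair of distinct parts $V_a,V_b$, a single residue pattern that all $\mu$-robust edge-vectors must obey in the coordinates $a$ and $b$. Concretely, for $r=1$ the statement is essentially immediate: $\bfi_{\cP}(U)$ is the vector $(k+1)$ and $\bfu_1$ is $(1)$, and since the all-ones-type analysis shows $k\cdot\bfu_1\in L_{\cP}^{\mu}(H)$ (from the existence of at least one $\mu$-robust edge-vector, which must be $(k)$), we get $\bfi_{\cP}(U)-\bfu_1 = k\cdot\bfu_1 \in L_{\cP}^{\mu}(H)$. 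The cases $r=2$ and $r=3$ are where the work lies, and there I would argue by studying the group $L_{\cP}^{\mu}(H)$ as a sublattice of $\{\bfi\in\mathbb{Z}^r : \sum_i \bfi_i \equiv 0 \bmod k\text{-scaled}\}$; the absence of transferrals forces $L_{\cP}^{\mu}(H)$ to be a proper "full-rank modulo something" sublattice, and I want to show its index is small enough that $\bfi_{\cP}(U)$ lies within one unit vector of it.

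The key input is the degree condition $\delta_d(H)\ge (g(k,d,1)+\r)\binom{n-d}{k-d}$ together with the quantitative definition of $g(k,d,1)=\max_{x}\min_{i=0,1,2}h_i(k,d,x)$ from Section 2 and the bound \eqref{eq:n0}. The strategy: first, show that $I_{\cP}^{\mu}(H)$ is nonempty and in fact spans a lattice of bounded index by a counting argument. For a $d$-set $S$ contained in some part, its $\ge (g(k,d,1)+\r)\binom{n-d}{k-d}$ neighbours are $(k-d)$-sets whose index vectors lie in $I_{k-d}^r$; if the robust edge-vectors all lay in a single coset of a sublattice $L'$ of large index, then \eqref{eq:n0} (applied with the partition into $V_1$ versus the rest, in each coordinate direction) would cap the number of such neighbours by $(g(k,d,1)+\r/2)\binom{n-d}{k-d}$, a contradiction. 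This is the standard "the robust edge-vectors hit every admissible residue class" move, and it forces $L_{\cP}^{\mu}(H)$ to have index at most $\ell+2=3$ in the full lattice of $k$-vectors-mod-transferrals (since $d\le k-2\le$ allows the relevant $h_i$ comparison). Then, since no transferral lies in $L_{\cP}^{\mu}(H)$, the index is exactly $2$ or $3$ and the quotient is cyclic, generated by a coset of some unit vector; hence $\mathbb{Z}^r/L_{\cP}^{\mu}(H)$ is $\mathbb{Z}_2$ or $\mathbb{Z}_3$ with $\bfu_i$ mapping to a generator for each $i$.

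With that structure in hand, finishing is bookkeeping: $\bfi_{\cP}(U)$ is a $(k+1)$-vector, every $\mu$-robust edge-vector is a $k$-vector, and in the quotient group $Q:=\mathbb{Z}^r/L_{\cP}^{\mu}(H)$ (cyclic of order $2$ or $3$) the image of any $k$-vector is determined; since $k$-vectors exist in $I_{\cP}^{\mu}(H)$, the image of "a generic $k$-vector" is some fixed $\theta\in Q$. The image of $\bfi_{\cP}(U)$ differs from $\theta$ by the image of a single unit vector's worth (because $(k+1)-k=1$ and $Q$ is generated by the common image of the $\bfu_i$), so $\bfi_{\cP}(U)-\bfu_i\equiv 0$ in $Q$ for an appropriate choice of $i\in[r]$ — precisely the conclusion. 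One has to handle the case analysis on $|Q|\in\{2,3\}$ and check that when $|Q|=2$ the relevant index-$2$ behaviour still gives the right $i$; here the hypotheses $d\ge\min\{3,k/2\}$ (or $(k,d)=(5,2)$) ensure $g(k,d,1)<1/3$ is the operative bound rather than a degenerate one, so that the index cannot collapse below what we need.

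\textbf{Main obstacle.} The delicate point is the index bound: translating the analytic inequality \eqref{eq:n0} into the statement "$L_{\cP}^{\mu}(H)$ has index at most $3$" requires choosing, for each coordinate direction and each potential sublattice coset, the right bipartition $V_1$ versus $V\setminus V_1$ of the ambient vertex set so that the count of neighbours of a well-chosen $d$-set is genuinely bounded by $h_i$-type sums; the parts $V_i$ of $\cP$ have uncontrolled sizes (only $|V_i|\ge\e^2 n$), so one must be careful that the optimization over $x$ in $g(k,d,1)$ really does dominate. This is exactly where the restriction $d\le k-2$ and the lower bound on $d$ enter, and getting the quantifiers ($\mu\ll\e\ll\r$) to line up so the $\r/2$ slack in \eqref{eq:n0} survives the $\mu$-robustness threshold is the crux of the argument.
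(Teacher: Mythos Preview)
Your overall architecture matches the paper's: first pin down the structure of $L_{\cP}^{\mu}(H)$, then do residue-class bookkeeping on $\bfi_{\cP}(U)$. But there is a genuine gap in the structural step, and a notational confusion that masks it.

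\medskip
\textbf{The quotient is misidentified.} Since $L_{\cP}^{\mu}(H)$ is generated by $k$-vectors, it sits inside $L_{\max}:=\{\bfv\in\mathbb{Z}^r:\ k\mid \sum_i \bfv_i\}$, and $[\mathbb{Z}^r:L_{\max}]=k$. So $\mathbb{Z}^r/L_{\cP}^{\mu}(H)$ cannot have order $2$ or $3$; what you actually want is $L_{\max}/L_{\cP}^{\mu}(H)$, or equivalently $\ker(\Sigma)/L_0$ where $\Sigma$ is the coordinate-sum map and $L_0=L_{\cP}^{\mu}(H)\cap\ker(\Sigma)$. Relatedly, the $\bfu_i$ do \emph{not} have a ``common image'': $\bfu_i-\bfu_j\notin L_{\cP}^{\mu}(H)$ by hypothesis, so the $\bfu_i$ represent pairwise distinct cosets. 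The bookkeeping you sketch only goes through because the $r$ distinct classes $[\bfi_{\cP}(U)-\bfu_i]$ cover all of the quotient---and that needs the quotient to have order exactly $r$.

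\medskip
\textbf{The real gap: for $r=2$ you need index $2$, not index $\le 3$.} Your degree argument, as stated, only yields $[L_{\max}:L_{\cP}^{\mu}(H)]\le 3$. But if $r=2$ and the index equals $3$ (i.e.\ $(3,-3)\in L_{\cP}^{\mu}(H)$ while $(2,-2)\notin L_{\cP}^{\mu}(H)$), then in the quotient $\mathbb{Z}_3$ the two candidates $\bfi_{\cP}(U)-\bfu_1$ and $\bfi_{\cP}(U)-\bfu_2$ land in two distinct nonzero classes, and neither lies in $L_{\cP}^{\mu}(H)$. So the bookkeeping fails. The paper (Claim~\ref{clm:vec_lat}) rules out exactly this case: when the index is $3$, the three residue classes of $d$-sets have neighbour counts bounded by the three functions $h_0,h_1,h_2$ from Section~2, and $\min_i h_i(k,d,x)\le g(k,d,1)$ by the very definition of $g(k,d,1)$ via \eqref{eq:gkd}--\eqref{eq:n0}; this forces $\delta_d(H)<(g(k,d,1)+\r)\binom{n-d}{k-d}$, a contradiction. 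Your sketch invokes \eqref{eq:n0} only to cap the index at $3$, whereas the point is that \eqref{eq:n0} is precisely sharp enough to cap it at $2$ when $r=2$. The hypothesis $d\ge\min\{3,k/2\}$ (or $(k,d)=(5,2)$) is used separately, in the paper's Case~2, to rule out index $\ge 4$ via a cruder $1/4$-type pigeonhole; it does not bear on index $3$.

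\medskip
For $r=3$ your outline is salvageable once the quotient is fixed: no transferrals forces index $\ge 3$ (three pairwise distinct nonzero elements summing to $0$ cannot live in $\mathbb{Z}_2$), and the paper's argument there (again Claim~\ref{clm:vec_lat}) bounds the index by $3$ using only $\delta_2(H)\ge(1/4+\r)\binom{n-2}{k-2}$---it does not use \eqref{eq:n0} at all---after which the three classes $[\bfi_{\cP}(U)-\bfu_i]$ exhaust $\mathbb{Z}_3$.
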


\medskip
\begin{proof}[Proof of Theorem~\ref{thm:npm2}]
The lower bound in the theorem is shown by Construction~\ref{con:db1} and the definition of $c^*_d(k)$.
For the upper bound, note that Proposition~\ref{prop:lattice} does not cover the cases when $d=1$ and $k\ge 4$ or $d=2$ and $k\ge 6$. In fact, in all of these cases, we have that $c^*_d(k) \ge 1-(1-1/k)^{k-d} > 1/2\ge g(k,d,1)$,\footnote{This can be proved by showing that $f(k) = 1-(1-1/k)^{k-d}$ is increasing for $d=1,2$.} and thus Theorem~\ref{thm:npm2} follows from Corollary~\ref{cor:HPS}.
So we may assume that $k$ and $d$ are integers such that $\min\{3, k/2\} \le d\le k-2$ or $(k,d) = (5,2)$.

Fix $\r>0$ and pick $0< \mu_0 \ll \e_0 \ll \r$.
We first apply Lemma~\ref{lem:P} with $\delta = g(k,d,1)+\r/2$, $\e_0$ and get $0<\beta_0\ll \e_0$.
We then apply Lemma~\ref{clm:trans} with $i_0=4$, $k'=k$ and $\{\beta_0, \mu_0\} \ll \e_0 \ll 1/i_0$ and get $0<\beta_1 \ll \{\beta_0, \mu_0\}$ and $t_0 \ge 4$. 
Pick $\beta_1', \mu_1, \e'>0$ such that $\{\beta_1', \mu_1\}\ll \e'\ll 1/t_0$ and $\beta_1'\le \beta_1$.
We apply Lemma~\ref{clm:trans} again with $i_0=t_0$, $k'=k$ and $\{\beta_1', \mu_1\}\ll \e'\ll 1/t_0$ and get $t\ge t_0$ and $0<\beta_2 \ll \{\beta_1', \mu_1\}$. Let $C:=C(k',k,I_{2k}^r)$.
At last, we pick a new round of constants
\[
1/n_0 \ll \a \ll \beta' \ll \{\beta, \mu\} \ll \e \ll \r, 1/k, 1/t, 1/C,
\]
such that we can apply the auxiliary results above.

For any $n\ge n_0$ such that $n \bmod k = \ell \ge 1$, let $H=(V, E)$ be a $k$-graph with $\delta_d(H) \ge (\max\{c^*_d(k), g(k,d,1)\}+\r)\binom{n-d}{k-d}$. 
Note that $\delta_1(H) \ge (g(k,d,1)+\r)\binom{n-1}{k-1}$.
We first apply Lemma~\ref{lem:P} on $H$ with $\delta = g(k,d,1)+\r/2$. Since $g(k,d,1)\ge 1/4$, we get a partition $\cP = \{V_0, V_1', \dots, V_r'\}$ 
with $r\le 3$ such that $|V_0|\le \sqrt\e n$ and for any $i\in [r]$, $|V_i'|\ge \e^2 n$ and $V_i'$ is $(\beta, 4)$-closed in $H$.
If $\bfu_i - \bfu_j\in L_{\cP}^{\mu}(H)$ for some $i, j\in [r]$, $i\neq j$, then we merge $V_i$ and $V_j$ to one part and by Lemma~\ref{clm:trans}, $V_i\cup V_j$ is $(\beta'', t')$-closed for some $\beta''>0$ and $t'\ge 4$. We greedily merge the parts until there is no transferral in the $\mu$-robust edge-lattice.
Let $\cP_0 = \{V_0, \dots, V_{r'}\}$ be the resulting partition for some $1\le r'\le 3$.
Note that we have applied Lemma~\ref{clm:trans} at most twice and by Proposition~\ref{prop21}, we conclude that for each $i\in [r']$, $V_i$ is $(\beta', t)$-closed by the choice of $\beta'$.
We apply Lemma~\ref{lem:abs} on $H$ and get $\F_{abs}$ such that $|V(\F_{abs})|\le tk^2 \beta' n$.

We build a matching $M_1$ in $H$ as follows. 
First note that $|I_{\cP_0}^{\mu}(H)|\le \binom{k + r' - 1}{r'-1}\le \binom{k+2}{2}$.
For each $\bfv\in I_{\cP_0}^{\mu}(H)$, we greedily pick a matching $M_{\bfv}$ of size $C\a^2 n$ such that $\bfi_{\cP_0}(e) = \bfv$ for every $e\in M_{\bfv}$. Then let $M_1$ be the union of $M_{\bfv}$ for all $\bfv\in I_{\cP_0}^{\mu}(H)$.
It is possible to pick $M_1$ because there are at least $\mu n^k$ edges $e$ with $\bfi_{\cP_0}(e) = \bfv\in I_{\cP_0}^{\mu}(H)$.
Indeed, since $\a \ll\beta' \ll \mu\ll 1/t, 1/C$, we have
\[
|V(M_1)\cup V(\F_{abs})|\le k |I_{\cP_0}^{\mu}(H)| C\a^2 n + tk^2 \beta' n < \mu n,
\]
which implies that the number of edges intersecting these vertices is less than $\mu n^k$ and we are done.

Next we will greedily match the vertices in $V_0\setminus V(\F_{abs})$. 
More precisely, we will find a matching $M_2$ that covers all vertices of $V_0\setminus V(\F_{abs})$. 
Note that $|M_2|\le |V_0|\le \sqrt\e n$.
When we greedily match a vertex $v\in V_0\setminus V(\F_{abs})$, we need to avoid at most
$k|M_2| + |V(M_1)\cup V(\F_{abs})| \le k\sqrt\e n + \mu n\le 2k\sqrt \e n$ vertices, and thus at most $2k\sqrt \e n^{k-1}$ $(k-1)$-sets. Since $\delta_1(H)> \r \binom{n-1}{k-1} > 2k\sqrt\e n^{k-1}$, we can always find a desired edge containing $v$ and add it to $M_2$.

Let $V' = V\setminus (V(\F_{abs})\cup V(M_1\cup M_2))$ and $H' = H[V']$. By previous calculations, we have $|V(\F_{abs})\cup V(M_1\cup M_2)|\le 2 k\sqrt\e n$ and thus
\[
\delta_d(H') \ge (c^*_d(k) + \r)\binom{n-d}{k-d} - 2 k\sqrt \e n\cdot n^{k-d-1} \ge (c^*_d(k) + \r/2)\binom{|V'|-d}{k-d},
\]
as $\e \ll \r$.
So by the definition of $c^*_d(k)$ and that $|V'|\ge n/2$ is large enough, we can find a matching $M_3$ in $H'$ which leaves at most $\a^2 |V'|\le \a^2 n$ vertices uncovered.

Now we absorb the uncovered vertices by $\F_{abs}$. Fix any set $U$ of $k+1$ uncovered vertices, by Proposition~\ref{prop:lattice}, there exists $i\in [r']$ such that $\bfi_{\cP_0}(U) - \bfu_i\in L_{\cP_0}^{\mu}(H)$.
Note that this does not guarantee that we can delete one vertex $v$ from $U$ such that $\bfi_{\cP_0}(U\setminus \{v\}) \in L_{\cP_0}^{\mu}(H)$, because it is possible that $U\cap V_i=\emptyset$ for the $i$ returned by the proposition.
By the degree condition, there is a vector $\bfv \in I_{\cP_0}^{\mu}(H)$ such that $\bfv_{V_i}>0$ and note that $M_1$ contains $C\a^2 n$ edges with index vector $\bfv$. Fix one such edge $e$ and a vertex $v\in e\cap V_i$.
We delete $e$ from $M_1$ and let $U' = U\cup (e\setminus \{v\})$. 
Clearly, $\bfi_{\cP_0}(U') \in L_{\cP_0}^{\mu}(H)$ and $|U'|= 2k$.
Thus, by definition, there exist nonnegative integers $b_\bfv, c_\bfv$ for all $\bfv\in I_{\cP_0}^{\mu}(H)$ such that
\[
\bfi_{\cP_0}(U') = \sum_{\bfv\in I_{\cP_0}^{\mu}(H)}b_{\bfv}\bfv - \sum_{\bfv\in I_{\cP_0}^{\mu}(H)}c_{\bfv}\bfv \quad i.e., \quad \bfi_{\cP_0}(U') + \sum_{\bfv\in I_{\cP_0}^{\mu}(H)}c_{\bfv}\bfv= \sum_{\bfv\in I_{\cP_0}^{\mu}(H)}b_{\bfv}\bfv.
\]
By the definition of $C$, we know that $b_\bfv, c_\bfv\le C$.
For each $\bfv\in I_{\cP_0}^{\mu}(H)$, we pick $c_\bfv$ edges in $M_1$ with index vector $\bfv$.
By the equation above, the union of these edges and $U'$ can be partitioned as a collection of $k$-sets, which contains exactly $b_\bfv$ $k$-sets $F$ with $\bfi_{\cP_0}(F) = \bfv$ for each $\bfv\in I_{\cP_0}^{\mu}(H)$.
We repeat the process at most $\a^2 n/k$ times until there are exactly $\ell$ vertices left.
Note that for each $\bfv\in I_{\cP_0}^{\mu}(H)$, our algorithm consumes at most $(1+C)\a^2 n/k < C\a^2 n$ edges from $M_1$ with index vector $\bfv$ -- this is possible by the definition of $M_1$.
Moreover, after the process, we get at most $(2+ |I_{\cP_0}^{\mu}(H)| C)\a^2 n/k\le (2+\binom{k+2}{2}C)\a^2 n/k < \a n$ $k$-sets $S$ with $\bfi_{\cP_0}(S) \in I_{\cP_0}^{\mu}(H)$, because $\a \ll 1/k, 1/C$.
By the definition of $\F_{abs}$, we can greedily absorb them by $\F_{abs}$ and get a matching $M_4$.
Thus, we get a near perfect matching of $H$.
\end{proof}

\subsection{The transferral-free lattices}

In this subsection we prove Proposition~\ref{prop:lattice}. We study the lattice structure $L_{\cP}^{\mu}(H)$ when it contains no transferral.

Fix $1\le p\le k-1$ and any $p$-vector $\bfv$, its \emph{neighborhood $N(\bfv)$} is the set of vectors $\bfv'$ such that $\bfv + \bfv'\in L_{\cP}^{\mu}(H)$.
Note that by definition, the vectors in $N(\bfv)$ may contain negative coordinates.
Moreover, assume $r=2$, we claim that $N(\bfv)\cap I_{k-p}^2\neq \emptyset$ for any $1\le p\le d$ and any $p$-vector $\bfv = (i, p-i)$. 
Indeed, otherwise, let $\bfv$ be a $p$-vector such that $N(\bfv)\cap I_{k-p}^2= \emptyset$.
This implies that the number of edges in $H[V\setminus V_0]$ with index vector $\bfi$ such that $\bfi - \bfv \in I_{k-p}^2$ is at most $|I_{k-p}^2| \mu n^k \le 2^{k-p} \mu n^k$.
Let $A_{\bfv}$ be the set of all $p$-sets $S$ with $\bfi_{\cP}(S) = \bfv$ and thus $|A_\bfv| = \binom{|V_1|}{i}\binom{|V_2|}{p-i} \ge \binom{\e^2 n}p$. 
By averaging, there is a $p$-set $S$ in $A_\bfv$ such that
\[
\deg_H(S) \le |I_{k-p}^2|\mu n^k / |A_\bfv| + |V_0| n^{k-p-1} \le 2^{k-p}\mu n^k / \binom{\e^2 n}{p} + \sqrt\e n^{k-p} < \r\binom{n-p}{k-p},
\]
by $\mu \ll \e \ll \r$. Since $p\le d$, this contradicts that $\delta_d(H)\ge (g(k,d,1)+\r)\binom{n-d}{k-d}$.
Note that a similar argument works for $r=3$, namely, for any $p$-vector $\bfv = (i, i', p-i-i')$ with $1\le p\le d$, $N(\bfv)\cap I_{k-p}^3\neq \emptyset$.

\begin{claim}\label{clm:vec_lat}
Given $\min\{3, k/2\} \le d\le k-2$ or $(k,d) = (5,2)$, $0<\mu \ll \e \ll \r$ and let $n$ be sufficiently large.
Let $H$ and $\cP$ be as defined in Proposition~\ref{prop:lattice}.
If $r=2$, then $(2,-2)\in L_{\cP}^{\mu}(H)$. If $r=3$, then $(-2,1,1), (1,-2,1), (1,1,-2)\in L_{\cP}^{\mu}(H)$.
\end{claim}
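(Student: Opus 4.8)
The plan is to treat $r=2$ and $r=3$ by one scheme; I describe $r=2$ and indicate the changes for $r=3$. The only input beyond the hypotheses is the \emph{completion property} proved in the paragraph just before the claim: for every $p$-vector $\bfv\in\mathbb Z^r$ with $1\le p\le d$ one has $N(\bfv)\cap I_{k-p}^r\ne\emptyset$. I set $Z:=\{z\in\mathbb Z:(z,k-z)\in L_\cP^\mu(H)\}$ and $D:=\{c\in\mathbb Z:(c,-c)\in L_\cP^\mu(H)\}=g\mathbb Z$. Since $\delta_d(H)\ge\tfrac14\binom{n-d}{k-d}$, $H$ has $\Omega(n^k)$ edges, so $I_\cP^\mu(H)\ne\emptyset$ and $Z\ne\emptyset$; hence $Z=b+g\mathbb Z$ when $g\ge1$, while $g=0$ forces $L_\cP^\mu(H)$ to have rank one with $Z=\{b\}$. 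The no-transferral hypothesis rules out $g=1$, so ``$(2,-2)\in L_\cP^\mu(H)$'' is equivalent to ``$g=2$''; I will assume $g=0$ or $g\ge3$ and derive a contradiction.

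The key point is a confinement statement. Every $\mu$-robust edge-vector lies in $L_\cP^\mu(H)$, so its first coordinate lies in $Z$; thus all $\mu$-robust edge-vectors have first coordinate in a single residue class modulo $g$ (all equal to $b$ if $g=0$). With $x:=|V_1|/n\in[\e^2,1-\e^2]$, this says that, apart from at most $|I_k^r|\mu n^k$ edges, $H$ is a subhypergraph of the divisibility-barrier hypergraph $H_{g-2}^{b}(x)$ of Construction~\ref{con:db1} (or a ``$(b,k-b)$-partite'' hypergraph when $g=0$). Averaging over the $\ge\binom{\e^2n}{d}$ many $d$-sets of a prescribed index vector --- exactly the averaging used to prove the completion property --- then gives, for each $v\in\{0,\dots,d\}$, a $d$-set $S$ with $\bfi_\cP(S)=(v,d-v)$ and
\[
\deg_H(S)\le\bigl(S_{b-v}(x)+o(1)\bigr)\binom{n-d}{k-d},\qquad S_i(x):=\sum_{a\equiv i \bmod g,\, 0\le a\le k-d}\binom{k-d}{a}x^a(1-x)^{k-d-a},
\]
so that $\delta_d(H)\le(\min_{0\le v\le d}S_{b-v}(x)+o(1))\binom{n-d}{k-d}$ (with the convention $g=\infty$ in the rank-one case, where additionally $\deg_H(S)=o(n^{k-d})$ for some $d$-set $S$ unless $d\le k/2$ and $b\in[d,k-d]$, in which case $S_{b-v}(x)$ is a single binomial term).

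It remains to verify $\min_{0\le v\le d}S_{b-v}(x)\le g(k,d,1)$ (possibly with equality, which still contradicts $\delta_d(H)\ge(g(k,d,1)+\r)\binom{n-d}{k-d}$ for large $n$, as $g(k,d,1)\ge\tfrac14$ by Proposition~\ref{prop:gkdll}). If $d\ge3$ this follows from averaging: when $g=3$, the $d+1\ge3$ consecutive residues $\{b-v\bmod 3:0\le v\le d\}$ exhaust $\mathbb Z_3$, so $\min_{0\le v\le d}S_{b-v}(x)=\min_ih_i(k,d,x)\le g(k,d,1)$ by the definition of $g(k,d,1)$; otherwise $\min_{0\le v\le d}S_{b-v}(x)\le 1/\min\{d+1,g\}\le\tfrac14\le g(k,d,1)$, since the chosen $\min\{d+1,g\}\ge4$ residue-sums (or single terms) add up to at most $1$. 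If $d=2$, only $(k,d)\in\{(4,2),(5,2)\}$ occur and $g\in\{0,3,4,5\}$; here one uses that any three consecutive values hit all residues mod $3$ --- so the relevant minimum is again at most $\min_ih_i(k,d,x)\le g(k,d,1)$ --- after disposing of the easy subcases where a residue with $S_i\equiv0$ is selected (giving $\deg_H(S)=o(n^{k-d})$) or a one-line explicit estimate applies (for $(5,2)$ with $g\in\{4,5\}$ the stray minima are $\le 1/8<g(5,2,1)\approx0.283$). This proves $g=2$, i.e.\ $(2,-2)\in L_\cP^\mu(H)$.

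For $r=3$ I would run the same argument with the transferral sublattice $T:=L_\cP^\mu(H)\cap\{\bfv:v_1+v_2+v_3=0\}$, a sublattice of $\Lambda:=\langle\bfu_1-\bfu_2,\bfu_1-\bfu_3\rangle$ containing no $\bfu_i-\bfu_j$: the three vectors in the statement have coordinate-sum zero, any two determine the third, and together they generate an explicit index-$3$ sublattice of $\Lambda$, so it suffices to show $T$ contains it. One uses the completion property for $2$-vectors (and $3$-vectors when $d\ge3$) to produce elements of $T$, excludes $\bfu_i-\bfu_j\in T$ by hypothesis, and excludes every remaining transferral-free possibility by confining the $\mu$-robust edge-vectors to a divisibility-barrier pattern and reading off a $\delta_d$-upper bound below $(g(k,d,1)+\r)\binom{n-d}{k-d}$. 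I expect the genuinely delicate part, for both $r=2$ and $r=3$, to be precisely this numerical step: checking case by case that confining the $\mu$-robust edge-vectors to one residue class forces $\delta_d(H)\le(g(k,d,1)+o(1))\binom{n-d}{k-d}$. This is where the identity $g(k,d,1)=\max_x\min_ih_i(k,d,x)$ and the exact hypothesis $d\ge\min\{3,k/2\}$ (or $(k,d)=(5,2)$) get used, and where the borderline subcases ($g=3$ and small $(k,d)$) must be verified by hand.
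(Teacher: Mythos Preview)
For $r=2$ your argument is essentially the paper's, repackaged. Your $D=g\mathbb Z$ is the paper's $L_0=\langle(t,-t)\rangle$; your case split $g=3$ versus $g\ge4$ (or $g=0$) is exactly the paper's $t_0=3$ versus $t_0\ge4$; and both finish via the same averaging against the $h_i$'s in the first case and the $1/4$-pigeonhole (using $g(k,d,1)\ge\lfloor2^{k-d}/3\rfloor2^{d-k}\ge1/4$) in the second, with the same ad hoc treatment of $(k,d)=(5,2)$.

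For $r=3$ your sketch diverges from the paper and is less economical. You propose to classify transferral-free sublattices $T\le\Lambda$ and, for each one not containing $\Lambda_0:=\langle(-2,1,1),(1,-2,1)\rangle$, derive a confinement-plus-averaging contradiction. The paper avoids any classification: it simply observes that the six $2$-vectors in $I_2^3$ fall into $m\ge3$ distinct $T$-cosets (since $(1,1,0),(1,0,1),(0,1,1)$ pairwise differ by transferrals), and that $(2,0,0)$ is separated from both $(1,1,0)$ and $(1,0,1)$, so if additionally $(2,0,0)\not\sim(0,1,1)$ (or symmetrically) then $m\ge4$. In that case the corresponding neighborhood classes partition $I_{k-2}^3$ into $\ge4$ pieces whose counts sum to at most $\binom{n-|V_0|}{k-2}$, and a single pigeonhole plus averaging gives $\delta_2(H)\le(1/4+o(1))\binom{n-2}{k-2}$, a contradiction. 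Hence $m=3$, which forces $(2,0,0)\sim(0,1,1)$, $(0,2,0)\sim(1,0,1)$, $(0,0,2)\sim(1,1,0)$ and delivers the three vectors directly. Your route would work in principle, but it turns a two-line pigeonhole into an enumeration of sublattices; the paper's argument is what you should aim for here.
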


\begin{proof}
First assume that $r=2$. Fix $(a_0, b_0)\in I_{\cP}^{\mu}(H)$.
For the sake of a contradiction, assume that $(2,-2)\notin L_{\cP}^{\mu}(H)$. Let $L_0$ be the sublattice (subgroup) of $L_{\cP}^{\mu}(H)$ such that $(a,b)\in L_0$ if $a+b=0$ and let $L_{k-d} = \{(a,b)\mid a+b = k-d\}$. 
Let $t$ be the smallest positive integer such that $(t, -t)\in L_0$ and it is easy to see that $L_0$ is generated by $(t, -t)$. By our assumption, $(1, -1), (2,-2)\notin L_{\cP}^{\mu}(H)$, and thus $t\ge 3$.
Let $t_0 = \min\{t, k-d+1\}$. It is easy to see that $L_0$ partitions $L_{k-d}$ into $t_0$ cosets $C_0,\dots, C_{t_0-1}$ such that $C_i = (k-d-i, i) + L_0$ for all $0\le i\le t_0 - 1$.
For any $0\le j\le d$ and $\bfv_j := (d-j, j)$, we have
\[
N(\bfv_j) = (a_0, b_0) - (d-j, j) + L_0 = (k-d-(b_0-j), b_0-j) + L_0.
\]
This means that $N(\bfv_j)= C_{i_j}$, where $i_j\equiv b_0-j$ mod $t_0$. 
We split into two cases.

\medskip
\noindent\textbf{Case 1.} $t_0= 3$.
Note that if $N(\bfv_0)\cap N(\bfv_1)\neq \emptyset$, say, $\bfi\in N(\bfv_0)\cap N(\bfv_1)$, then we have $\bfi+\bfv_0, \bfi+\bfv_1\in L_{\cP}^{\mu}(H)$ and thus $(2,-2)=2(\bfv_0 - \bfv_1)\in L_{\cP}^{\mu}(H)$, a contradiction.
Similarly $N(\bfv_1)\cap N(\bfv_2) = N(\bfv_0)\cap N(\bfv_2)=\emptyset$ and thus $N(\bfv_0)$, $N(\bfv_1)$ and $N(\bfv_2)$ are pairwise disjoint.
Consequently $\{N(\bfv_0), N(\bfv_1), N(\bfv_2)\} = \{C_0, C_1, C_2\}$.
Recall that $C_i = (k-d - i, i)+ L_0$ for $i=0,1,2$, namely, all $(k-d)$-vectors with their second coordinates congruent to $i$ modulo 3. 
For $i=0,1,2$, let $B_i$ be the collection of $d$-sets with exactly $j$ vertices in $V_2$ such that $j\equiv i$ mod 3 and thus $|B_i| \ge \binom{|V_1|}{d-i}\binom{|V_2|}{i}\ge \binom{\e^2 n}{d}$.
By averaging, there exists a $d$-set $S_i$ in $B_i$ such that 
\begin{align*}
\deg_H(S_i) &\le \sum_{j\equiv i \bmod 3, 0\le j\le k-d} \binom{|V_2|} {j} \binom{|V_1|}{k-d-j} + 2^{k-d}\mu n^k/ |B_i| + |V_0| n^{k-d-1} \\
&< \sum_{j\equiv i \bmod 3, 0\le j\le k-d} \binom{|V_2|} {j} \binom{|V_1|}{k-d-j} + \frac{\r}2\binom{n-d}{k-d},
\end{align*}
by $\mu \ll \e \ll \r$. Thus we have
\[
\min_{i=0,1,2} \deg_H(S_i) < \min_{i=0,1,2} \left\{ \sum_{j\equiv i \bmod 3,  0\le j\le k-d} \binom{|V_2|} {j} \binom{|V_1|}{k-d-j} \right\} + \frac{\r}2\binom{n-d}{k-d}.
\]
By \eqref{eq:n0}, we know that the first term in the right hand side of the inequality above is at most $(g(k,d,1)+\r/2)\binom{n-d}{k-d}$, as $n$ is large enough.
Thus, we get $\delta_d(H)\le \min_{i=0,1,2} \deg_H(S_i) < (g(k,d,1)+\r)\binom{n-d}{k-d}$, a contradiction.

\medskip
\noindent\textbf{Case 2.} $t_0 \ge 4$. Since $t_0 = \min\{t, k-d+1\}$, we have $t \ge 4$ and $d\le k-3$. 
First assume that $d\ge 3$.
Recall that $N(\bfv_j)= C_{i_j}$, where $i_j\equiv b_0-j$ mod $t_0$. Since $t_0 \ge 4$, $b_0 -j$ for $j\in\{0,\dots, 3\}$ are pairwise distinct. 
This implies that $N(\bfv_0),\dots, N(\bfv_3)$ are four distinct classes.
For $j=0,\dots, 3$, consider the following sums
\[
\sum_{(k-d-i_j, i_j)\in C_{i_j}, 0\le i_j\le k-d} \binom{|V_1|}{k-d-i_j} \binom{|V_2|} {i_j} 
\]
and note that their sum is at most $\binom{n - |V_0|}{k-d}$. By the pigeonhole principle, there exists $j'$ such that the $j'$-th sum is at most $\frac1{4}\binom{n - |V_0|}{k-d}$. This implies that
\[
\delta_d(H) \le \frac1{4}\binom{n - |V_0|}{k-d} + |I_{k-d}^2|\mu n^k/\binom{\e^2 n}{d} + |V_0| n^{k-d-1} < (1/4 + \r)\binom{n - d}{k-d},
\]
by $\mu \ll \e \ll \r$. This is a contradiction because $g(k,d,1) \ge \lfloor \frac{2^{k-d}}3 \rfloor 2^{d-k} \ge 1/4$.

Now we assume $d=2$ and by $d\le k-3$, we have $k=5$. Thus, $k-d=3$, $t_0=4$ and for $i=0,\dots, 3$, $C_i\cap I_3^2 = \{(3-i, i)\}$.
Since $(1,-1), (2,-2)\notin L_{\cP}^{\mu}(H)$, we know that for $i=0,1,2$, $N(\bfv_j)\cap I_3^2 = C_{i_j}\cap I_3^2  = \{(3-i_j, i_j)\}$ are distinct. 
So $\{ N(\bfv_{i})\cap I_3^2 \}_{i=0,1,2}$ equals $I_3^2\setminus \{(3,0)\}$, $I_3^2\setminus \{(2,1)\}$, $I_3^2\setminus \{(1,2)\}$ or $I_3^2\setminus \{(0,3)\}$.
If $\{ N(\bfv_{i})\cap I_3^2 \}_{i=0,1,2} = I_3^2\setminus \{(0,3)\} = \{ \{(3, 0)\}, \{(2,1)\}, \{(1,2)\}\}$, then
\begin{align*}
&\min\left\{\binom{|V_1|}{3} , \binom{|V_1|}{2} {|V_2|}, {|V_1|} \binom{|V_2|} {2}\right\} \\
\le& \min\left\{\binom{|V_1|}{3}+\binom{|V_2|}{3} , \binom{|V_1|}{2} {|V_2|}, {|V_1|} \binom{|V_2|} {2}\right\} < (g(5,2,1) + \r/2) \binom{n-2}{3},
\end{align*}
by \eqref{eq:n0} and that $n$ is large enough.
By averaging, we get that
\[
\delta_2(H) < (g(5,2,1) + \r/2) \binom{n-2}{3} + 2^{3}\mu n^5/\binom{\e^2 n}{2} + |V_0| n^{2} < (g(5,2,1) + \r)\binom{n - 2}{3},
\]
by $\mu \ll \e \ll \r$, a contradiction. The other three cases are similar.

\medskip
Second we assume that $r=3$.
Indeed, in this case, it suffices to have $\delta_2(H)\ge (1/4 +\r)\binom{n-2}{k-2}$. 
Consider the set of 2-vectors
\[
I_2^3 = \{(2,0,0), (0,2,0), (0,0,2), (1,1,0), (1,0,1), (0,1,1)\}.
\]
Note that $N((1,1,0))$, $N((1,0,1))$ and $N((0,1,1))$ are pairwise disjoint -- because $L_{\cP}^{\mu}(H)$ contains no transferral. Similarly, $N((2,0,0))\cap N((1,1,0)) = \emptyset$ and $N((2,0,0))\cap N((1,0,1)) = \emptyset$ (and similar equations hold for other vectors).
Moreover, recall that for all $\bfv\in I_2^3$, $N(\bfv)\cap I_{k-2}^3$ is nonempty.
Thus, $I_{k-2}^3$ are partitioned into classes $C_1', \dots, C_m'$ for $m\ge 3$ where each class has the form $N(\bfv)\cap I_{k-2}^3$ for some (maybe not unique) $\bfv\in I_2^3$.
If $m\ge 4$, then consider the sums $\sum_{(j_1, j_2, j_3)\in C_i'} \binom{|V_1|} {j_1} \binom{|V_2|}{j_2} \binom{|V_3|}{j_3}$ for $i=1,\dots, m$ and note that their sum is at most $\binom{n - |V_0|}{k-2}$.
Similar as the previous cases, by averaging, we have
\[
\delta_2(H) \le \frac1{m}\binom{n - |V_0|}{k-2} + |I_{k-2}^3|\mu n^k/\binom{\e^2 n}{2}  + |V_0| n^{k-3}< (1/4 + \r)\binom{n - 2}{k-2},
\]
by $\mu \ll \e \ll \r$, a contradiction.
Otherwise, $m=3$. Since $N((1,1,0))$, $N((1,0,1))$ and $N((0,1,1))$ must be in different classes, we know that
\[
N((1,1,0)) = N((0,0,2)), \, N((1,0,1)) = N((0,2,0)), \text{ and } N((0,1,1)) = N((2,0,0)),
\]
which implies that $(1,1,-2), (1,-2,1), (-2,1,1)\in L_{\cP}^{\mu}(H)$.
\end{proof}

\medskip
\begin{proof}[Proof of Proposition~\ref{prop:lattice}]
Given such a $k$-graph $H$ and a partition $\cP$. The conclusion is trivial if $r=1$. So we may assume that $r=2$ or 3. We first apply Claim~\ref{clm:vec_lat} and conclude that $(2,-2)\in L_{\cP}^{\mu}(H)$ (for $r=2$) or $(-2,1,1), (1,-2,1), (1,1,-2)\in L_{\cP}^{\mu}(H)$ (for $r=3$).

If $r=2$, then fix any $U\subseteq V(H)\setminus V_0$ with $\bfi_{\cP}(U) = (a, k+1-a)$ for some $0\le a\le k+1$ and pick any $(a_0, b_0)\in I_{\cP}^{\mu}(H)$. Since $(2,-2)\in L_{\cP}^{\mu}(H)$, then $(a_0 + 2i, b_0 - 2i)\in L_{\cP}^{\mu}(H)$ for any integer $i$.
Note that $a-1$ and $a$ have different parities, so exactly one of $(a-1, k+1-a)$ and $(a, k-a)$ is in $L_{\cP}^{\mu}(H)$.

Now assume that $r=3$ and consider any $U\subseteq V(H)\setminus V_0$ with $\bfi_{\cP}(U) = (a_1, a_2, a_3)$ for some nonnegative integers $a_1+ a_2 + a_3= k+1$.
Pick any $(b_1, b_2, b_3)\in I_{\cP}^{\mu}(H)$ and let $c_j = a_j - b_j$ for $j\in [3]$.
Note that exactly one of the three consecutive integers $c_3 - c_2-1, c_3 - c_2$ and $c_3 - c_2 +1$ is divisible by 3. Thus let $i\in [3]$ such that $\bfv: = (c_1', c_2', c_3') =(c_1, c_2, c_3) - \bfu_i$ satisfies that $c_3' - c_2'$ is divisible by 3.
Let $m = (c_3' - c_2')/3$ and $m' = m+ c_2'$. 
Note that $c_1' + c_2' + c_3' =0$ and it is easy to see that
\[
\bfi_{\cP}(U) - \bfu_i - (b_1, b_2, b_3) = \bfv = m'(-2, 1, 1) - m(1,1,-2)\in L_{\cP}^{\mu}(H).
\]
Thus, $\bfi_{\cP}(U) - \bfu_i\in L_{\cP}^{\mu}(H)$ and the proof is complete.
\end{proof}

\section{Proof of Theorem~\ref{thm:npm3}}

We prove Theorem~\ref{thm:npm3} in this section. Note that $1-(1-\frac1k)^{k-d} = \frac{2k-1}{k^2}$ when $d=k-2$.
For $k=4,5$, $\frac{2k-1}{k^2}>1/3>g(k,k-2,1)$ and thus for this range, the result is covered by Corollary~\ref{cor:npm}.
For the cases $k\ge 6$, we will use the absorbing method adapted from \cite{RRS09}.

Given a set $S$ of $k+2$ vertices, we call an edge $e\in E(H)$ disjoint from $S$ \emph{$S$-absorbing} if there are two disjoint edges $e_1$ and $e_2$ in $E(H)$ such that $|e_1\cap S|=k - 2$, $|e_1\cap e| = 2$, $|e_2\cap S|=4$, and $|e_2\cap e|=k-4$. 
Note that this is not the absorbing in the usual sense because $e_1\cup e_2$ misses two vertices of $S\cup e$.
Let us explain how such absorbing works. Let $S$ be a $(k+2)$-set and $M$ be a matching, where $V(M)\cap S=\emptyset$, which contains an $S$-absorbing edge $e$. Then $M$ can ``absorb'' $S$ by replacing $e$ in $M$ by $e_1$ and $e_2$ (two vertices of $e$ become uncovered).

\begin{lemma}\label{lem:absk2}
For all $k\ge 6, c > 0$ there exists $\beta_0>0$ such that the following holds for all $0<\beta\le \beta_0$ and sufficiently large integer $n$. 
Let $H$ be an $n$-vertex $k$-graph with $\delta_{k-2}(H)\ge c n^2$, then there exists a matching $M'$ in $H$ of size $|M'|\le \beta n$ such that for every $(k+2)$-tuple $S$ of vertices of $H$, the number of $S$-absorbing edges in $M'$ is at least $\beta^2 n$.
\end{lemma}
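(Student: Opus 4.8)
The plan is to use the standard absorbing-via-expectation argument of R\"odl, Ruci\'nski and Szemer\'edi: first show that every $(k+2)$-set $S$ has many $S$-absorbing edges in $H$, then select a random sub-family and clean it up. Concretely, for a fixed $(k+2)$-set $S$, I would lower-bound the number of $S$-absorbing edges $e$. To build such an $e$ together with its witnesses $e_1,e_2$, first split $S$ into a $(k-2)$-set $S_1$ and a disjoint $4$-set $S_2$ (there are $\binom{k+2}{4}$ ways). Using $\delta_{k-2}(H)\ge cn^2$ applied to $S_1$, there are at least $cn^2$ edges containing $S_1$; each such edge has the form $S_1\cup\{a,b\}$ for a pair $\{a,b\}$ disjoint from $S$ (we discard the $O(n)$ choices meeting $S$). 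This gives $e_1=S_1\cup\{a,b\}$, and we set two of the vertices of the target edge $e$ to be exactly $\{a,b\}$. Next we need $e_2$ with $|e_2\cap S|=4$, i.e.\ $S_2\subseteq e_2$, and $|e_2\cap e|=k-4$, with $e_2$ disjoint from $e_1$. We choose a $(k-2)$-set $S_2'\subseteq S_2\cup R$ for a suitable set — more carefully: pick a $(k-2)$-subset $T$ of $V(H)\setminus(S\cup\{a,b\})$ that will play the role of a $(k-2)$-set inside $e_2$, apply $\delta_{k-2}(H)$ to get many extensions, but we must force $S_2\subseteq e_2$, so instead apply the degree condition to a $(k-2)$-set of the form $S_2\cup T'$ where $|T'|=k-6$. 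Since $k\ge 6$ this is legitimate ($k-6\ge 0$), and we get $\ge cn^2$ choices for the remaining two vertices of $e_2$, minus lower-order terms for collisions. Finally the edge $e$ itself has $k-4$ vertices in common with $e_2$ and $2$ vertices ($\{a,b\}$) in common with $e_1$; that accounts for $k-2$ vertices of $e$, and the remaining $2$ vertices of $e$ are chosen from $e_2\cap e$...

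Rather than belaboring the exact bookkeeping, the point is: by iterating the $(k-2)$-degree condition a bounded number of times and at each stage paying only $O(n^{\text{fixed}-1})$ for the vertices we must avoid, one gets that the number of $S$-absorbing edges is at least $c' n^k$ for some $c'=c'(c,k)>0$. (The exponent is $k$ because $e$ has $k$ vertices and each is free up to a constant-degree polynomial factor; the witnesses $e_1,e_2$ contribute only constant-size freedom beyond what is already counted, but even counting generously we still get $\Omega(n^k)$ absorbing edges.)

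Given that, the rest is routine. Let $\beta_0$ be small depending on $c'$ and $k$. Form a random family $M'$ by including each edge of $H$ independently with probability $p:=\beta_0 n^{1-k}$. Then $\mathbb{E}|M'| = p\cdot e(H) \le p n^k = \beta_0 n$; the expected number of intersecting pairs of edges in $M'$ is at most $p^2 \cdot n^{2k-1} = \beta_0^2 n$; and for each fixed $(k+2)$-set $S$, the expected number of $S$-absorbing edges in $M'$ is at least $p\cdot c' n^k = c'\beta_0 n$. By Markov's inequality and a union bound over the $\le n^{k+2}$ choices of $S$ (using that $c'\beta_0 n$ is polynomially large so concentration — e.g.\ Chernoff — beats the union bound), with positive probability $M'$ simultaneously has $|M'|\le 2\beta_0 n$, has at most $\beta_0^2 n$ intersecting pairs, and every $S$ has at least $\tfrac12 c'\beta_0 n$ absorbing edges in $M'$. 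Delete one edge from each intersecting pair to obtain an actual matching; this removes at most $\beta_0^2 n$ edges, so the number of $S$-absorbing edges surviving is still at least $\tfrac12 c'\beta_0 n - \beta_0^2 n \ge \beta^2 n$ for any $\beta\le\beta_0$ once $\beta_0$ is chosen small enough (and $|M'|\le\beta n$ likewise). This yields the desired matching $M'$.

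The main obstacle is the first step: correctly organizing the three interlocking edges $e, e_1, e_2$ so that the prescribed intersection pattern ($|e_1\cap S|=k-2$, $|e_1\cap e|=2$, $|e_2\cap S|=4$, $|e_2\cap e|=k-4$, and $e_1\cap e_2=\emptyset$) is consistent and can actually be realized greedily from the codegree condition, in particular checking that the several $(k-2)$-sets to which we apply $\delta_{k-2}(H)$ can be chosen disjointly and that the cardinalities add up (this is where $k\ge 6$ is used, so that $k-6\ge 0$ and $k-4\ge 2$). Once the pattern is seen to be feasible, counting $\Omega(n^k)$ absorbing edges is a routine greedy estimate, and the probabilistic deletion argument is completely standard.
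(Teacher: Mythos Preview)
Your approach is essentially the same as the paper's: count $\Omega(n^k)$ $S$-absorbing edges by iterating the $(k-2)$-codegree condition (three times: once each to build $e_1$, $e_2$, and $e$, with $k-6$ free vertex choices using $k\ge 6$), then run the standard random-selection-plus-deletion argument. Two small bookkeeping points: in the counting step your last sentence is garbled (the ``remaining $2$ vertices of $e$'' are obtained by one more application of $\delta_{k-2}$ to the $(k-2)$-set already placed in $e$, not ``chosen from $e_2\cap e$''); and in the probabilistic step you should select with probability $p=\beta n^{1-k}$ rather than $\beta_0 n^{1-k}$, since the statement requires $|M'|\le \beta n$ and $\ge \beta^2 n$ absorbers for the \emph{given} $\beta\le\beta_0$ --- with your choice of $p$ you only get $|M'|\le 2\beta_0 n$.
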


\begin{proof}
Our proof is adapted from the proofs of \cite[Fact 2.2, Fact 2.3]{RRS09}.
Let $\beta_0 = c^3/(12k!)$ and $0<\beta \le \beta_0$.
Let $H$ be an $n$-vertex $k$-graph with $n$ sufficiently large and $\delta_{k-2}(H)\ge c n^2$.
Given any $(k+2)$-set of vertices $S$, we will show that there are many $S$-absorbing edges.
Let us fix four vertices $u_1, \dots, u_4$ in $S$ and count only those $S$-absorbing edges $e$ for which the corresponding edge $e_2$ contains $u_1,\dots, u_4$.
We count the ordered $k$-tuples of distinct vertices $(v_1,\dots, v_k)$ such that $e=\{v_1, \dots, v_k\}$ is disjoint from $S$, $e_1\cap e = \{v_{k-3}, v_{k-2}\}$ and $e_2 = \{v_1,\dots, v_{k-4}, u_1,\dots, u_4\}$, and divide the result by $k!$.

For each $j = 1,\dots, k-6$, there are precisely $n-j-k$ choices of vertex $v_j$. Having selected $v_1,\dots, v_{k-6}$, each of $\{v_{k-5}, v_{k-4}\}$, $\{v_{k-3}, v_{k-2}\}$ and $\{v_{k-1}, v_{k}\}$ must be a neighbor of an already fixed $(k-2)$-tuple of vertices.
Thus, there are at least $\delta_{k-2}(H) - 2k n$ choices for each pair.
Altogether since $n$ is large enough, there are at least $(n - 2k)^{k-6} (\delta_{k-2}(H) - 2k n)^3 \ge \frac12 c^3 n^k$ choices of the desired ordered $k$-tuples.
So there are at least $\frac12 c^3 n^k/k!$ $S$-absorbing edges in $H$.

Now we pick the absorbing matching $M'$. Select a random subset $M$ of $E(H)$, where each edge is chosen independently with probability $p=\beta n^{1-k}$.
Then, the expected size of $M$ is at most $\binom nk p < \beta n/k!$, and the expected number of intersecting pairs of edges in $M$ is at most $n^{2k-1}p^2 = \beta^2 n$.
Hence, by Markov's inequality, with probability at least $1-1/2-1/k!$, $|M|\le \beta n$ and $M$ contains at most $2\beta^2 n$ intersecting pairs of edges.
Moreover, for every $(k+2)$-set of vertices $S$, let $X_S$ be the number of $S$-absorbing edges in $M$. Then we have
\[
\mathbb{E} (X_S) \ge p\cdot \frac12 c^3 n^k/k! \ge \frac{\beta c^3 n}{2k!}.
\]
By Chernoff's bound, with probability $1-o(1)$, we have that $X_S \ge \frac12 \mathbb{E} (X_S)\ge \frac{\beta c^3 n}{4k!}$ for all $(k+2)$-sets $S$ in $H$.

Thus, there is an $M\subseteq E(H)$ satisfying all the properties above. We delete one edge from each intersecting pair of edges and denote the resulting matching by $M'$. 
So $|M'|\le \beta n$ and for every $(k+2)$-set of vertices $S$, $M'$ contains at least $\frac{\beta c^3 n}{4k!} - 2\beta^2 n \ge \beta^2 n$ $S$-absorbing edges, by the definition of $\beta$.
\end{proof}

\begin{proof}[Proof of Theorem~\ref{thm:npm3}]
Fix $\r>0$. We apply Lemma~\ref{lem:absk2} with $c=\frac{2k-1}{2k^2}$ and get $\beta_0$. Let $\beta = \min\{\beta_0, \r/(3k)\}$.
Let $H$ be an $n$-vertex $k$-graph such that $n\equiv \ell$ mod $k$ for some $\ell\in \{2,\dots, k-1\}$ is sufficiently large and $\delta_{k-2}(H)\ge (\frac{2k-1}{k^2} + \r)\binom{n-k+2}{2}$.
We apply Lemma~\ref{lem:absk2} and get the absorbing matching $M'$ of size at most $\beta n$ and satisfying the absorbing property.

Let $H' = H[V(H)\setminus V(M')]$ and $n' = |V(H')|$. Note that
\[
\delta_{k-2}(H') \ge \left( \frac{2k-1}{k^2} + \r \right)\binom{n-k+2}{2} - k \beta n (n-1) \ge \left(\frac{2k-1}{k^2} + \r/2 \right)\binom{n'-k+2}{2}.
\]
Thus, by Theorem~\ref{thm:KOT}, $H'$ contains a matching $M_1$ that leaves at most $\beta^2 n$ vertices uncovered.
Fix any $(k+2)$-tuple of uncovered vertices $S$, $M'$ contains at least $\beta^2 n$ $S$-absorbing edges. Fix an $S$-absorbing edge $e$, we replace $M'$ by $M_S':= (M'\setminus \{e\} ) \cup \{e_1, e_2\}$, decreasing the number of uncovered vertices by $k$.
Since we have at most $\beta^2 n/k$ iterations, there will always be an available $S$-absorbing edge in $M'$. In the end, we have exactly $\ell$ vertices left uncovered and we are done.
\end{proof}

%
%

\section{Concluding Remarks}
We remark that Proposition~\ref{prop:lattice} is the bottle neck in the proof of Theorem~\ref{thm:npm2}.
Indeed, in the current proof, by the assumption $\delta_d(H)\ge (g(k,d,1)+o(1))\binom{n-d}{k-d} \ge (1/4+o(1))\binom{n-d}{k-d}$, it suffices to study vertex partitions $\cP=\{V_0,\dots, V_r\}$ with $r\le 3$. 
So, to improve Theorem~\ref{thm:npm2}, one has to analyze the vertex partition with more parts and prove a stronger version of Proposition~\ref{prop:lattice}.

\section*{Acknowledgement}
The author is indebted to Yi Zhao and anonymous referees for careful readings and comments.
The author would like to thank Wei Gao, Yoshiharu Kohayakawa and Yi Zhao for helpful discussions.

\bibliographystyle{amsplain}
\bibliography{Apr2015}

\end{document}